\newtheorem{theorem}{Theorem}
\newtheorem{corollary}[theorem]{Corollary}
\newtheorem{lemma}[theorem]{Lemma}
\newtheorem{example}[theorem]{\it Example}
\newtheorem{proposition}[theorem]{Proposition}
\newtheorem{definition}[theorem]{Definition}
\newtheorem{remark}[theorem]{\it Remark}
\newcommand\Real{{\mathfrak R}{\mathfrak e}\,} %
\newcommand{\R}{\mathbb{{R}}}
\newcommand{\N}{\mathbb{{N}}}
\newcommand{\C}{\mathbb{{C}}}
\newcommand{\D}{\mathbb{{D}}}
\begin{document}

\title[Poisson equation and Hilbert transform]{Poisson equation and discrete one-sided Hilbert transform for $(C,\alpha)$-bounded operators}

\author[L. Abadias]{Luciano Abadias}
\address[L. Abadias]{Departamento de Matem\'aticas, Instituto Universitario de Matem\'aticas y Aplicaciones, Universidad de Zaragoza, 50009 Zaragoza, Spain.}
\email{labadias@unizar.es}

\author[J. E. Gal\'e]{Jos\'e E. Gal\'e}
\address[J. E. Gal\'e]{Departamento de Matem\'aticas, Instituto Universitario de Matem\'aticas y Aplicaciones, Universidad de Zaragoza, 50009 Zaragoza, Spain.}
\email{gale@unizar.es}

\author[C. Lizama]{Carlos Lizama}
\address[C. Lizama]{Departamento de Matem\'atica y Ciencia de la Computaci\'on, Facultad de Ciencias, Universidad de Santiago de Chile,
Casilla 307, Correo 2, Santiago, Chile}

\email{carlos.lizama@usach.cl}

\thanks{The two first-named authors have been partly supported by Project MTM2016-77710-P, DGI-FEDER, of the MCYTS and 
Project E26-17R, D.G. Arag\'on, Universidad de Zaragoza, Spain, and the third author has been supported by the CONICYT-Chile under 
FONDECYT grant number 1180041 and DICYT-Universidad de Santiago de Chile, USACH}

\subjclass[2010]{Primary 47A64, 47A56, 47A60; Secondary 47A35, 26A33, 40G05.}

\keywords{Ces\`aro bounded operator; mean ergodicity; functional calculus; Poisson equation; one-sided Hilbert transform.}

\begin{abstract}
We characterize the solutions of the Poisson equation and the domain of its associated one-sided Hilbert transform for 
Ces\`aro bounded operators of fractional order. The results obtained fairly generalize the corresponding ones for power-bounded operators. In passing, we give an extension of the mean ergodic theorem. Examples are given to illustrate the theory.
\end{abstract}

\date{}

\maketitle

\section{Introduction}\label{Intro}
\setcounter{theorem}{0}
\setcounter{equation}{0}

\medskip
Let $X$ be a complex Banach space and let $\mathcal{B}(X)$ denote the Banach algebra 
of bounded linear operators on $X$. 
Let $T\in\mathcal{B}(X)$. Then $T$  
is said to be power-bounded if 
$\sup\{\Vert T^n\Vert:n\in\N\}<\infty$. Put $M^1_T(n):=(n+1)^{-1}\sum_{j=0}^nT^jx$.  The operator $T$ is called 
Ces\`aro-mean bounded 
if $\sup\{\Vert M^1_T(n)\Vert:n\in\N\}<\infty$, and mean-ergodic if there exists
$P_1x:=\lim_{n\to\infty}M_T^1(n)x$ for all $x\in X$ (in norm). In this case $P_1$ is in fact a bounded projection onto 
the closed subspace 
${\rm{Ker}}(I-T)$ of $X$. Clearly, mean-ergodicity implies mean-boundedness.

Mean ergodic theorems form an important, classical, area of study since the beginning of the operator theory, 
see \cite[p. 657 and subseq.]{DS}. There are different versions of that type of theorems. 
The following result is 
well known. Let $T$ be a power-bounded operator on $X$. Then, for a given $x\in X$, there exists $P_{1}x$
if and only if $x$ belongs to the (topological) direct sum ${\rm{Ker}}(I-T)\oplus\overline{\rm{Ran}}(I-T)$ in $X$,
see \cite[Th. 1.3]{Kr}. Thus $T$ is mean-ergodic if and only if $X={\rm{Ker}}(I-T)\oplus\overline{\rm{Ran}}(I-T)$, which happens, for instance, when $X$ is reflexive \cite[Th. 1.2]{Kr}.

Ergodicity is related with probability theory via ergodic Markov chains
and their associated probability operators
$\mathcal P$ (involving pointwise convergence particularly,  when $X$ is taken as a $L_p$ space, for example); e.g.,
 it is possible to obtain central limit theorems for elements
$y\in\rm{Ran}(I-\mathcal P)\subseteq X$. Even more, with this kind of applications in mind and also looking at ergodicity in itself,
 it is relevant to find elements $x\in X$ for which the convergence rate in the mean ergodic limit
$$
P_1x=\lim_{n\to\infty}M_T^1(n)x
$$
is appropriate, say polynomial, for instance. It turns out that the above question is closely related with a suitable description of elements $y$ in range spaces
$(I-T)^sX\subseteq X$, $0<s\le1$, which takes us to the study of the
so-called fractional Poisson equation
\begin{equation}\label{PoissonEq}
(I-T)^sx=y, \quad x,y\in X.
\end{equation}
As a matter of fact, given $y\in X$, the equation (\ref{PoissonEq}) for a power-bounded and mean ergodic operator $T$ has a solution $x$
if and only if the series
$$
\sum_{n=0}^\infty{1\over n^{1-s}}T^ny
$$
converges in the norm of $X$, whence $\lim_{N\to\infty}\Vert n^{s-1}\sum_{n=1}^NT^ny\Vert=0$. Moreover, $x$ is obtained as a series 
representation $x=\sum_{n=1}^\infty c_n(s)T^ny$ where $c_n(s)\sim n^{s-1}$ as $n\to\infty$. 
For the above facts and other pertinent remarks, we refer to the introductions of references \cite{DL, HT} and \cite{GHT}.

In \cite{DL}, it is observed that $((I-T)^s)_{\Real s>0}$ is a (holomorphic) semigroup in ${\mathcal B}(X)$, and a $C_0$-semigroup if we further assume that $(I-T)X$ is dense in $X$. Let $\log(I-T)$ denote the infinitesimal generator of this semigroup. A natural question is whether or not $-\log(I-T)$ coincides with the one-sided ergodic Hilbert transform 
${\mathcal H}_T$ for $T$, given by
$$
{\mathcal H}_Tx:=\sum_{n=1}^\infty{T^n\over n}x,
$$
whenever $x\in X$ is such that the series converges in $X$.
Also in \cite{DL}, it was shown that ${\mathcal H}_T\subseteq\log(I-T)$ as (generally) unbounded operators on $X$. 
The equality 
${\mathcal H}_T=-\log(I-T)$ has been established in full generality in \cite{CCL} and \cite{HT} independently from one paper to the other and with different proofs  (see also \cite{AL, CL, CuL} for particular $T$). 
While the arguments used in \cite{CCL} are specific for series concerned with that one defining ${\mathcal H}_T$, 
the approach carried out in \cite{HT} relies on the usage of the functional calculus. Looking at power operators
 $(I-T)^s$, it is clear that the analysis of ${\rm{Ran}}(I-T)^s$ is equivalent to the study of the domain
${\rm{Dom}}(I-T)^{-s}$ of the inverse operator $(I-T)^{-s}$, $s>0$, where one is assuming that $I-T$ is injective. 
Also, the search for the equality
${\mathcal H}_T=-\log(I-T)$ entails the study of ${\rm{Dom}}\log(I-T)$. Then the idea leading \cite{HT} is to use the functional calculus for suitable analytic functions
$\frak f$, involving $\log(1-z)$ and $(1-z)^{-s}$, so that the elements of ${\rm{Dom}}\,\frak f(T)$ can be identified as those $x\in X$ which makes $\sum_{n=1}^\infty a_nT^nx$ norm-convergent, where $a_n$, $n\ge0$, are the Taylor coefficients 
of $\frak f$.

Initially, the basic domain of that calculus is the sequence space $\ell^1$, or its alternative version as (holomorphic) 
Wiener algebra $A_+^1(\D)$ on the unit disc $\D$. Then the domain is extended to so-called regularizable functions
$\frak f$, with respect to $A_+^1(\D)\equiv\ell^1$, in the way introduced in \cite{Haase}, see \cite[Def. 2.4]{HT}. 
But this is not enough to get characterized domains ${\rm{Dom}}\,{\frak f}(T)$. A sufficient condition on $\frak f$ is provided 
in \cite{HT} by introducing the notion of
{\it admissibility}, so that ${\rm{Dom}}\,{\frak f}(T)$ can be identified if $\frak f$ is both regularizable and admissible on 
$\D$.
By a theorem due to Th. Kaluza in 1928, remarkable examples of admissible functions are those whose Taylor coefficients form logarithmically convex sequences, see \cite[Prop. 4.4]{HT} (which gives a new proof of Kaluza's theorem).
Within this framework, the domains of  $(I-T)^{-s}$, $s>0$, and $\log(I-T)$ are characterized in \cite[Th. 6.1, Th. 6.2]{HT}.

In the present paper, we want to give a qualitative and quantitative jump  to that topic with a new  and original viewpoint, 
by noticing that the Poisson equation (\ref{PoissonEq}) can be also thought in the setting of fractional difference equations. 
Some authors have realized that the semigroup $(I-T)^s$ is a useful tool for modeling differential equations of fractional order, 
and it is well known that discretization techniques are useful in problems on differential equations; see \cite{AE, Li15} 
and references therein. As a sample, let $T$ be the backward shift operator on $\ell^2(\N_0)$. 
Then $I-T=D$,
where $D$ is the first order finite difference given by $Da(n):=a(n)-a(n+1)$, $n\ge0$, for every sequence $a$. 
Thus $D^2$ is the discretization of the one-dimensional Laplace operator. The discrete Poisson problem $D^{2s}u=f$ 
arises in the Markov chains theory, so that the corresponding operator $T$ is the transition matrix. The solution $u$ can be 
expressed as the asymptotic variance, which is an important parameter in central limit theorems (\cite{AsB, Gl, JLY, W}).
On the other hand, maximum and comparison principles for fractional differences $D^s$, as well as uniqueness of corresponding Dirichlet problems, have been recently established in connection with the elliptic problem $D^{s}u=f$ in
\cite{ADT}; a probabilistic interpretation of equation $D^su=0$ is also given in \cite[Remark 2.5]{ADT}.
Thus, answering the question of what are the elements of ranges $(I-T)^sX$ provides us with the kind of functions which are the proper data in problems on difference equations, so that it can be considered as an inverse problem.

In all of the above, the operators $T$ are assumed to be power-bounded. However, there are other important 
classes of operators with weaker growth properties on their powers for which the items discussed above are of interest.
This is the case of the $(C,\alpha)$-bounded operators defined below, in Section \ref{Pre}. 
The connection of these operators
and ergodicity dates back to the fourties of last century, see \cite{C} and \cite{Hi}. In the latter, E. Hille
studies $(C,\alpha)$-convergence in terms of Abel convergence (that is, via the resolvent operator). 
As an application, the well known mean ergodic von Neumann's theorem for unitary groups  on Hilbert spaces is extended to $(C,\alpha)$-convergence for every $\alpha>0$ \cite[p. 255]{Hi}. Also, the
$(C,\alpha)$-ergodicity on $L_1(0,1)$ of fractional (Riemann-Liouville) integrals is elucidated in \cite[Th. 11]{Hi}. 
In particular, if $V$ is the Volterra operator then $T_V:=I-V$, as operator on $L_1(0,1)$, is not power-bounded, 
and it is $(C,\alpha)$-ergodic if and only if $\alpha>1/2$ \cite[Th. 11]{Hi}.
As a matter of fact, growth properties and ergodicity of $(C,\alpha)$ means of operators have been extensively studied
over the years (see \cite{AB, De00, Ed04, E2, LSS, Sa, Su-Ze13, TZ, Yo98} and references therein). 
Very recently, in connection with operator inequalities and models, it has been shown that the shift operator on weighted Bergman spaces is $(C, \alpha)$-ergodic, for $\alpha>0$ depending on the weight. See \cite{ABY} and Section \ref{apliques} below.

In short, there is a well established literature on $(C,\alpha)$-bounded operators and ergodicity, which explores quite a number of properties and their interplays. However, 
neither the Poisson equation $(I-T)^sx=y$ nor the relation between the one-sided Hilbert transform and the logarithm operator 
$\log(I-T)$ seem to have been raised for
$(C,\alpha)$-bounded operators $T$ with fractional $\alpha>0$. 

Here, we generalize or extend results of \cite{DL, CCL, HT} to the setting of $(C,\alpha)$-bounded operators.
To do this, we follow the methodology introduced in \cite{HT} and, reflecting the viewpoint quoted formerly, recent tools associated with fractional differences \cite{ADT, Li17}.
The paper is organized accordingly.

After this introduction, Section \ref{Pre} is devoted to preliminaries on $(C,\alpha)$-bounded operators and fractional differences, which are defined in terms of Ces\`aro numbers. It is to be noticed that 
the Weyl difference operator $W^\alpha$ and its partner $D^\alpha$, $\alpha>0$, defined on sequence spaces, coincide sometimes but not always, this fact being one of the subtle difficulties to circumvent in the article. The section contains examples of further application.

The decomposition $X={\rm{Ker}}(I-T)\oplus\overline{{\rm{Ran}}}(I-T)$ plays a key role in ergodic theorems, and is also relevant in the treatment of the (fractional) Poisson equation and the one-sided (discrete) Hilbert transform. Thus this item is discussed in
Section \ref{ergodicity} for $(C,\alpha)$-bounded $T$. In Theorem \ref{MeanErg}, it is shown that for such an operator $T$ the above splitting of $X$ occurs if and only if $T$ is $(C,\beta)$-ergodic for every $\beta>\alpha$. This result is a fairly general extension of the mean ergodic theorem for power-bounded operators.

In order to set our discussion within the framework built in \cite{HT}, we need to replace the $\ell^1$-calculus 
with another one which characterizes
$(C,\alpha)$-bounded operators. The domain of this calculus is the convolution subalgebra $\tau^\alpha$ of $\ell^1$ formed by sequences $f\in\ell^1$ such that the series
$\Vert f\Vert_{(\alpha)}:=\sum_{n=0}^\infty\vert W^\alpha f(n)\vert k^{\alpha+1}(n)$ is finite (see \cite{ALMV}). The holomorphic function counterpart of
$\tau^\alpha$, which is to say the space of Taylor series of elements in $\tau^\alpha$, is denoted by $A^\alpha(\D)$ 
(note that the algebra $A^1_+(\D)$ of \cite{HT} is $A^0(\D)$ here). Section \ref{CFcesar} contains basic properties of the 
$\tau^\alpha$-calculus (or $A^\alpha(\D)$-calculus, equivalently), its relation with fractional difference operators 
and the coincidence of this calculus with that one for sectorial operators via the transformation $z\mapsto(1-z)$. Moreover, 
$\alpha$-regularizable functions are defined according to \cite{Haase}, with respect to 
the algebra $A^\alpha(\D)$ (Definition \ref{a-reg}). In the examples, it is shown that functions 
$(1-z)^s$ and $\log(1-z)$ are
$\alpha$-regularizable. Finally, the identity
$\overline{{\rm{Ran}}}(I-T)^s=\overline{{\rm{Ran}}}(I-T)$ is proved for $(C,\alpha)$-bounded operators $T$.
A generalization of admissibility, called here $\alpha$-admissibility (Definition \ref{a-admisi}), is implemented in
Section \ref{admisi}. Its relation with fractional differences is analysed, as well as the possibility 
to construct certain approximate units out from $\alpha$-admissible functions. Having laid the technical groundwork 
of our paper in Section \ref{admisi}, examples of $\alpha$-admissibility are presented in
Section \ref{exadmisi}. These are generalizations of logarithmic convexity to higher order differences and fractional differences. 
Then an extension of Kaluza's result is provided (Theorem \ref{logconvex}) that allows us to include as concrete examples 
the functions $(1-z)^{-s}$ and $-z^{-1}\log(1-z)$, and so operators $(I-T)^{-s}$ and $\log(1-T)$ in the 
$(C,\alpha)$-operator context.

With the above stuff at disposition, we give in Section \ref{DOF} a characterization of $x\in{\rm{Dom}}\,{\frak f}(T)$ 
by convergence of the series
$\sum_{n=0}^\infty D^\alpha f(n)\Delta^{-\alpha}{\mathcal T}(n)x$, where 
$\Delta^{-\alpha}{\mathcal T}(n)=\sum_{j=0}^n k^{\alpha}(n-j)T^j$
(Theorem \ref{dominios}). 
 In particular, in Section \ref{PoiLo} it is shown that for $0<s<1$ one has
$x\in{\rm{Dom}}(I-T)^{-s}$ if and only if $\sum_{n=1}^\infty n^{s-\alpha-1}\Delta^{-\alpha}{\mathcal T}(n)x$ converges 
(Theorem \ref{CesPoisson}), and $x\in{\rm{Dom}}\log(I-T)$ if and only if the series
$\sum_{n=1}^\infty n^{-(1+\alpha)}\Delta^{-\alpha}{\mathcal T}(n)x$ converges and, in this case, we obtain the new formula
$$
\log(I-T)x=(\psi(\alpha+1)-\psi(1)) x
-\sum_{n=1}^\infty B(\alpha + 1,n) \Delta^{-\alpha}\mathcal{T}(n)x,
$$
where $\psi$ is the digamma function -so that $\psi(\alpha+1)-\psi(1)=\int_0^1(1-u^\alpha)(1-u)^{-1}du$- and B is the Beta function (Theorem \ref{LOG}). This latter result suggests that one might perhaps define the one-sided 
{\it$\alpha$-ergodic} Hilbert transform for a $(C,\alpha)$-bounded operator $T$ by
$$
H_T^{(\alpha)}:=(\psi(1)-\psi(\alpha + 1))
+\sum_{n=1}^\infty B(\alpha + 1, n) \Delta^{-\alpha}\mathcal{T}(n)
$$
(for $\alpha=0$ it equals the usual one-sided Hilbert transform $H_T$).

However, the actions of $(I-T)^{-s}$ and $H_T^{(\alpha)}$, on elements in their corresponding domains, 
keep their original forms in terms of Taylor series 
when $0<\alpha<1-s$ and 
$0<\alpha<1$ respectively. 
To show this, the arguments of Section \ref{admisi}, worked out on coefficients 
$W^\alpha f(n)$ and $D^\alpha f(n)$, are refined in Section \ref{taylor} to find suitable approximate units for 
$A^\alpha(\D)$ in terms of Taylor coefficients (see Theorem \ref{approximation01} and Theorem \ref{approximation02}). 

In this way, we obtain, for $0<s<1$ and $0<\alpha<1-s$ (Theorem \ref{equivTaylor}), the interesting characterization
$$
x\in{\rm{Dom}}(I-T)^{-s}\Longleftrightarrow\sum_{n=1}^\infty n^{s-1}T^nx\, \hbox{ converges } 
$$
and, for $0<\alpha<1$, Theorem \ref{LOG&hilbert}:
$$
x\in{\rm{Dom}}\log(I-T)\Longleftrightarrow
\sum_{n=1}^\infty n^{-1}T^nx\, \hbox{ converges}.
$$
Moreover in this case $\log(I-T)=-{\mathcal H}_T$, that is,
$$
\log(I-T)=-\sum_{n=1}^\infty{1\over n}T^n.
$$
Theorem \ref{equivTaylor} extends \cite[Th. 2.11]{DL} and \cite[Th. 6.1]{HT}, and Theorem \ref{LOG&hilbert} extends 
\cite[Prop. 3.3]{CCL} and \cite[Th. 6.2]{HT}.

In section \ref{apliques} we show two concrete examples to illustrate and apply the preceding results. These examples are about the Volterra operator on $L_p$ spaces, and about the shift operator on Bergman spaces expressed in the form of sequences. 
Finally, the proof of (extended Kaluza's) Theorem \ref{logconvex} has been relegated to an Appendix.

\section{Preliminaries}\label{Pre}

For $\alpha\in\C$, let $k^{\alpha}$ denote the sequence on
$\N_0:=\{0,1, 2, \dots \}$ where  $k^{\alpha}(n)$ is the $n$-th coefficient of the generating function
$(1-z)^{-\alpha}$; that is,
\begin{equation}\label{generating}
\displaystyle\sum_{n=0}^{\infty}k^{\alpha}(n)z^n=\frac{1}{(1-z)^{\alpha}},\quad |z|<1.
 \end{equation}
The elements of sequences $k^{\alpha}$ are called Ces\`aro numbers, and are given by $k^{\alpha}(0)=1$ and
\begin{equation}\label{definition}
k^{\alpha}(n)
:={n+\alpha-1\choose \alpha-1}=\frac{\alpha(\alpha+1)\cdots(\alpha+n-1)}{n!},\, n\in\N;
\end{equation}
see \cite[Vol. I, p.77]{Zygmund}, where $k^{\alpha}(n)$ is denoted by $A_{n}^{\alpha-1}$. 
For
$\alpha\in \C\setminus\{0,-1,-2,\dots\}$ one has
$\displaystyle k^{\alpha}(n)=\frac{\Gamma(n+\alpha)}{\Gamma(\alpha)\Gamma(n+1)},$
where $\Gamma$ is the Gamma function. 
It is well known the important role played by sequences $k^{\alpha}$ in the theory of summability of Fourier series, see \cite{Zygmund}. Recently, it has been realized that they have application in the theory of fractional difference equations; 
i. e. \cite{GL, Li15, Li17}.

It follows from \eqref{generating} that the family $(k^{\alpha})_{\alpha\in\C}$
satisfies the group property, $k^{\alpha}\ast k^{\beta}=k^{\alpha+\beta}$ for $\alpha, \beta \in \C$, where the operation
$\lq\lq\ast"$ is the convolution of sequences. Note that $k^0$ is the Dirac mass $\delta_0$ on $\N_0$. 
Also by \eqref{generating},
\begin{equation}\label{zero}
\sum_{n=0}^{\infty}k^{\alpha}(n)=0,\quad \hbox{for }\, \Real \alpha<0.
\end{equation}

Assume $\alpha\in\R$. As a function of $n$, $k^\alpha$ is increasing  for $\alpha >1$, decreasing for 
$0<\alpha <1,$ and $k^1(n)=1$ for all 
$n\in \N_0$
(\cite[Th. III.1.17]{Zygmund}). Furthermore, $0\leq k^\alpha(n)\le k^\beta(n)$ for $\beta \ge \alpha>0$ and $n\in \N_0$. For $m\in\N_0,$

\begin{equation}\label{binom}
k^{-m}(n)=\left\{\begin{array}{ll}
(-1)^n\binom{m}{n},& 0\leq n\leq m; \\
0,& n\geq m+1,
\end{array} \right.
\end{equation}

and, if $m< \alpha<m+1,$

\begin{equation}\label{sign}
\text{sign}\,k^{-\alpha}(n)=\left\{\begin{array}{ll}
(-1)^n,& 0\leq n\leq m; \\
(-1)^{m+1},& n\geq m+1.
\end{array} \right.
\end{equation}

As regards the asymptotic behaviour of the sequence $k^{\alpha}$ we have
\begin{equation}\label{double2}
k^{\alpha}(n)=\frac{n^{\alpha-1}}{\Gamma(\alpha)}(1+O({1\over n})),\ \text{ as }n\to\infty, \, \hbox{ for every }
\alpha\in \C\backslash\{0,-1,-2,\ldots\};
\end{equation}
see \cite[Vol.I, p.77 (1.18)]{Zygmund} or \cite[Eq.(1)]{ET}.

\medskip
Ces\`aro numbers appear in the definition of {\it fractional differences}. For a sequence $f$, define
$$
Wf(n):=f(n)-f(n+1),\quad n\in \N_0,
$$

and subsequently, $W^1:=W$,  $W^{m+1}:= W^mW$ for $m\in\N$. Then one has
$$
W^{m}f(n)=\displaystyle\sum_{j=0}^{m}(-1)^{j}\binom{m}{j}f(n+j),
\quad n\in \N_0,\, m\in \N.
$$

Differences $W^m$ are extended to the fractional case in \cite[Def. 2.2]{ALMV} as follows.

\begin{definition}\label{WeylDifference}
\normalfont
Let $f: \mathbb{N}_0 \to \C$  and $\alpha>0$ be given.
The  {\it Weyl sum} $W^{-\alpha}f$ {\it of order} $\alpha$ of $f$ is defined by
$$
W^{-\alpha}f(n):=\displaystyle\sum_{j=n}^{\infty} k^{\alpha}(j-n)f(j),\quad n\in\N_0,
$$
whenever the right hand side makes sense.
The {\it Weyl difference} $W^\alpha f$ {\it of order} $\alpha$ of $f$ is defined by
$$
W^{\alpha}f(n)=W^m W^{-(m-\alpha)}f(n),\quad n\in\N_0,
$$
for $m=[\alpha]+1$, with $[\alpha]$ the integer part of $\alpha$, whenever the right hand side makes sense.
\end{definition}

In some cases, $W^\alpha$ admits another useful description: Let $\ell^1(\omega)$ denote the space of absolutely summable sequences on $\N_0$ with respect to a weight $\omega\colon\N_0\to\C$. Let $D^{\alpha}$ be the operator on sequences $f$ given formally by
$$
D^{\alpha}f(n):=\displaystyle\sum_{j=n}^{\infty} k^{-\alpha}(j-n)f(j), \quad n\in\N_0;
$$
see \cite{ADT}.
Note that, for $\alpha=m\in\N_0$,
$$
D^m f(n)=\displaystyle\sum_{j=n}^{\infty} k^{-m}(j-n)f(j)=
\sum_{j=0}^{m} (-1)^{j}\binom{m}{j}f(j+n)=W^mf(n)
$$
for all $f$ and $n$. Moreover, by
\cite[Th. 3.2]{AM}\label{Theorem 2.2}
we have
\begin{equation}\label{WigualD}
W^{\alpha}f=D^{\alpha}f
\end{equation}
whenever both expressions make sense.
For example, it happens when
$f\in\ell^1(k^{m-\alpha})\subset \ell^1(k^{-\alpha})$, $m=[\alpha]+1$. To see this, notice that $W^\alpha$ is well defined on
$\ell^1(k^{m-\alpha})$, $D^\alpha$ is well defined on $\ell^1(k^{-\alpha})$ and
$\ell^1(k^{m-\alpha})\hookrightarrow\ell^1(k^{-\alpha})$. (In the opposite, the two members of \eqref{WigualD} need not make sense simultaneously. For instance, if $0<\alpha<1$ the series
$\sum_{j=n}^{\infty} k^{-\alpha}(j-n)k^1(j)$ is convergent, but $W^{\alpha}k^1$ is not defined.)
Also, it is clear that
$W^{-\alpha}$ is well defined on $\ell^1(k^\alpha)$. Moreover it is injective on $\ell^1(k^\alpha)$, since
 \begin{equation}\label{deuvedoble}
 D^\alpha(W^{-\alpha}f)=f,\ \text{ for every } f\in\ell^1(k^\alpha).
 \end{equation}

In effect, first note that $(\vert k^{-\alpha}\vert\ast k^\alpha)(q)=O(k^\alpha(q))$, as $q\to\infty$, $\alpha>0$. To see this,  take the integer part $[\alpha]$ of $\alpha$ and $q>[\alpha]$.
Then by (\ref{sign}), $(\vert k^{-\alpha}\vert\ast k^\alpha)(q)=\sum_{p=0}^q\vert k^{-\alpha}(p)\vert k^{\alpha}(q-p)
=2k^{\alpha}(q)-\sum_{p=0}^q k^{-\alpha}(p)k^{\alpha}(q-p)=2k^{\alpha}(q)-k^0(q)=2k^{\alpha}(q)$
if  $0<\alpha<1$, and
\begin{eqnarray*}
(\vert k^{-\alpha}\vert&\ast& k^\alpha)(q)=\sum_{p=0}^{[\alpha]}(-1)^p k^{-\alpha}(p)k^{\alpha}(q-p)
+\sum_{p=[\alpha]+1}^q (-1)^{[\alpha]+1}k^{-\alpha}(p)k^{\alpha}(q-p)\\
&=&
\sum_{p=0}^{[\alpha]}\left((-1)^p+(-1)^{[\alpha]}\right) k^{-\alpha}(p)k^{\alpha}(q-p)
+(-1)^{[\alpha]+1}  \sum_{p=0}^{q} k^{-\alpha}(p)k^{\alpha}(q-p)\\
&\le& 2\sum_{p=0}^{[\alpha]}\vert k^{-\alpha}(p)\vert k^{\alpha}(q-p) + (-1)^{[\alpha]+1} k^0(q)\le M_\alpha k^\alpha(q),
\end{eqnarray*}
for some positive constant $M_\alpha$, if $\alpha\ge1$. Hence, for $n\in\N_0$ and $f\in\ell^1(k^\alpha)$,
$$
\vert D^\alpha(W^{-\alpha}f)(n)\vert\le 
\sum_{j=0}^{\infty}|k^{-\alpha}(j)|\sum_{l=n+j}^{\infty}k^\alpha(l-n-j)|f(l)|
\leq M_{\alpha}\sum_{l=n}^{\infty}|f(l)|k^{\alpha}(l-n)<\infty,
$$
which implies in turn, for $n\in\N_0$,
$$
D^\alpha(W^{-\alpha}f)(n)=\sum_{l=n}^{\infty}f(l)\sum_{j=0}^{l-n}k^{\alpha}(l-n-j)k^{-\alpha}(j)
=\sum_{l=n}^{\infty}f(l)\delta_{0}(l-n)=f(n),
$$
as we wanted to show.

\begin{example}\label{ex22}
{\rm\begin{itemize}
\item[(i)] For $\mu \in \mathbb{C}\backslash\{0\}$ define $p_{\mu}(n):=\mu^{-(n+1)}$, $n\in\N_0$. 
It is proven in \cite[Ex. 2.5]{ALMV} that functions $p_{\mu}$ are eigenfunctions of the operator 
$W^{\alpha}$ for $\alpha\in \R$ and $\vert \mu\vert >1;$ namely,
$$
W^{\alpha}p_{\mu}= \mu^{-\alpha}(\mu-1)^{\alpha}p_{\mu},\quad |\mu|>1.
$$
\item[(ii)] Let $s\in \R$ and $m\in\N_0.$ Then
$$
D^m k^{s}(n)=W^m k^{s}(n)
=(-1)^{m}k^{s-m}(n+m), \quad n\in\N_0;
$$
see \cite[Ex. 3.4]{AM}. Also, if $\alpha>0$ and $s\in (0,1),$ then by \cite[Lemma 1.1]{AM} one gets 
\begin{equation}\label{2.10}D^{\alpha} k^s(n)=\frac{\Gamma(1-s+\alpha)\Gamma(s+n)}{\Gamma(s)\Gamma(1-s)\Gamma(n+\alpha+1)}=\frac{\sin(\pi s)\Gamma(1-s+\alpha)\Gamma(s+n)}{\pi\Gamma(n+\alpha+1)},\quad n\in\N_0,
\end{equation}
and therefore, by \cite[Eq.(1)]{ET}, 
\begin{equation}
\label{2.11}D^{\alpha} k^s(n)
=\frac{\Gamma(1-s+\alpha)}{\Gamma(s)\Gamma(1-s)}n^{s-\alpha-1}(1+O({1\over n})), \quad\text{ as }n\to\infty.
\end{equation}

\item[(iii)] Let $m\in\N_0$ and let $L$ be the sequence defined by $L(n)=\displaystyle\frac{1}{n+1}$ for $n\in\N_0$.
Then,
for $\alpha>0,$ by \cite[Lemma 1.1]{AM} we have
\begin{equation}\label{2.12}
D^{\alpha}L(n)=\frac{\Gamma(\alpha+1)n!}{\Gamma(n+\alpha+2)},\quad n\in\N_0,
\end{equation}
and by \cite[Eq.(1)]{ET},
\begin{equation}\label{2.13}
D^{\alpha}L(n)=\frac{\Gamma(\alpha+1)}{n^{\alpha+1}}(1+O({1\over n})), \quad\text{ as }n\to\infty.
\end{equation}
\end{itemize}}
\end{example}

\medskip
We now introduce $(C,\alpha)$-bounded operators and ergodicity in terms of 
Ces\`aro numbers.  
Let $X$ be a Banach space. 
For $T$ in $\mathcal{B}(X)$, let
$\mathcal{T}$ denote the discrete semigroup associated with $T$, given by
$\mathcal{T}(n):=T^n$ for $n\in\N_0:=\N\cup\{0\}$, with $T^0$ the identity operator on $X$.
Take $\alpha\ge0$ and set, for $x\in X$ and $\ n\in \N_0$,
\begin{equation*}
\Delta^{-\alpha} \mathcal{T}(n)x :=(k^{\alpha}*\mathcal{T})(n)
= \displaystyle\sum_{j=0}^n k^{\alpha}(n-j)T^j x,\qquad M_T^{\alpha}(n)x
:=\frac{1}{k^{\alpha+1}(n)}\Delta^{-\alpha} \mathcal{T}(n)x.
\end{equation*}
The operators $\Delta^{-\alpha} \mathcal{T}(n)$ and
$M^{\alpha}_T(n)$ in $\mathcal{B}(X)$ are called the $n$-th
Ces\`{a}ro sum and Ces\`{a}ro mean of $T$ of order $\alpha$, respectively.

The operator $T$ is called Ces\`{a}ro bounded of order $\alpha$, or simply $(C,\alpha)$-bounded, if it satisfies
$\sup_{n} \|M_T^{\alpha}(n) \| < \infty$. Thus
$(C,0)$-boundedness is the same as
power-boundedness, that is, $\sup_{n}\Vert T^n\Vert<\infty$.
For $\alpha=1$ the operator $T$ is called Ces\`{a}ro mean bounded (or Ces\`{a}ro bounded). If $T$ is $(C,\alpha)$-bounded then it is $(C,\beta)$-bounded for every $\beta>\alpha$ but the converse does not hold true in general;
 see for example \cite[Section 4.7]{E2} , \cite[Remark 2.3]{Su-Ze13} and \cite[Section 2]{AB}.

On the other hand, a bounded linear operator $T$ is said to be $(C,\alpha)$-ergodic
if there exists
$P_{\alpha}x:=\lim_{n\to\infty}M_T^{\alpha}(n)x$ for all $x\in X$,
in the norm of $X$ (in this case $P_{\alpha}$ is in fact a bounded projection onto the closed subspace ${\rm{Ker}}(I-T)$ of $X$). For $\alpha=1$, $T$ is called mean ergodic.
Clearly, $(C,\alpha)$-ergodicity implies $(C,\alpha)$-boundedness. Quite a number of papers have been done about ergodicity of $(C,\alpha)$-bounded operators,
and about the growth of Ces\'aro sums and Ces\'aro means of order $\alpha$,
which look for extending to fractional order the main results and features of the operator ergodic theory. We focus here on the particular line of research explained in the former introduction, Section \ref{Intro}, which seems to have not been considered before. Nonetheless, we establish a number of ergodic results on $(C,\alpha)$-bounded operators in the next section.

\section{$(C,\alpha)$-mean ergodic results}\label{ergodicity}
\setcounter{theorem}{0}
\setcounter{equation}{0}

A version of the mean ergodic theorem says that a power-bounded operator $T$ is Ces\`{a}ro mean ergodic if and only if $X$ splits as
$X={\rm {Ker}}(I-T)\oplus\overline{{\rm{Ran}}}(I-T)$ \cite[Th. 1.3]{Kr}.
Our aim here is to give an extension of that result for $(C,\alpha)$-bounded operators for every $\alpha>0$. Recall that $T$ is $(C,\alpha)$-bounded when $\sup_{n} \|M_T^{\alpha}(n) \| < \infty$, where
$k^{\alpha+1}(n)M_T^{\alpha}(n):=\Delta^{-\alpha} \mathcal{T}(n):=(k^{\alpha}*\mathcal{T})(n)
=\sum_{j=0}^n k^{\alpha}(n-j)T^j$.

\begin{lemma}\label{dondeconverge}
Let $T$ be a $(C,\alpha)$-bounded operator on  a Banach space $X,$ and $\beta>\alpha$. Put
$X_{\beta}:=\{x\in X: \hbox{ \rm{there exits} } P_\beta x:=\lim_{n\to\infty}M_T^{\beta}(n)x \hbox{ \rm{in} } X\}$.
Then
$$
{\rm{Ran}} P_\beta ={\rm {Ker}}(I-T)\, \hbox{ and }\, {\rm {Ker}} P_\beta =\overline{{\rm{Ran}}}(I-T),
$$
so that
$$
X_{\beta}={\rm {Ker}}(I-T)\oplus\overline{{\rm{Ran}}}(I-T).
$$
\end{lemma}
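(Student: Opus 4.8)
The plan is to exhibit $P_\beta$ as a bounded projection on the closed subspace $X_\beta$ and to compute its range and kernel, following the classical mean-ergodic scheme but with the Ces\`aro-number estimates adapted to fractional order. Since $\beta>\alpha$, the operator $T$ is $(C,\beta)$-bounded, so $C:=\sup_n\Vert M_T^\beta(n)\Vert<\infty$. This uniform bound gives for free that $X_\beta$ is a closed subspace (it is exactly the set of strong convergence of a uniformly bounded sequence of operators) and that $P_\beta x:=\lim_n M_T^\beta(n)x$ is linear on $X_\beta$ with $\Vert P_\beta\Vert\le C$; I would record these soft facts first.

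Next I would dispose of the two easy inclusions. If $Tx=x$ then $T^jx=x$, and since $\sum_{j=0}^n k^\beta(n-j)=(k^\beta\ast k^1)(n)=k^{\beta+1}(n)$ one gets $M_T^\beta(n)x=x$ for all $n$; hence ${\rm{Ker}}(I-T)\subseteq X_\beta$ with $P_\beta$ acting as the identity there, so ${\rm{Ker}}(I-T)\subseteq{\rm{Ran}}\,P_\beta$. Dually, the identity $I-M_T^\beta(n)=\frac{1}{k^{\beta+1}(n)}\sum_{j=0}^n k^\beta(n-j)(I-T^j)$ shows $(I-M_T^\beta(n))x\in{\rm{Ran}}(I-T)$ for every $x$, a fact I will use at the end.

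The crux, and the step I expect to be the main obstacle, is to prove that $M_T^\beta(n)(I-T)\to0$, and in fact in operator norm. Here I would start from the telescoping identity
$$
\Delta^{-\beta}\mathcal{T}(n)(I-T)=k^\beta(n)\,I-T\,\Delta^{-(\beta-1)}\mathcal{T}(n),
$$
which comes from the one-step difference $k^\beta(m)-k^\beta(m+1)=-k^{\beta-1}(m+1)$. Dividing by $k^{\beta+1}(n)$, the term $k^\beta(n)/k^{\beta+1}(n)=O(1/n)$ is harmless, so everything reduces to bounding $\Vert\Delta^{-(\beta-1)}\mathcal{T}(n)\Vert/k^{\beta+1}(n)$. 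Writing $\Delta^{-(\beta-1)}\mathcal{T}=k^{\beta-1-\alpha}\ast\Delta^{-\alpha}\mathcal{T}$ and using $\Vert\Delta^{-\alpha}\mathcal{T}(m)\Vert\le C\,k^{\alpha+1}(m)$, two regimes must be separated. If $\beta\ge\alpha+1$ the kernel $k^{\beta-1-\alpha}$ is nonnegative and the convolution collapses to $C\,k^\beta(n)$, giving $O(1/n)$. If $\alpha<\beta<\alpha+1$ the order $\nu:=\beta-1-\alpha$ lies in $(-1,0)$, so $k^\nu$ is no longer of constant sign; but by the sign rule \eqref{sign} together with \eqref{zero} it is absolutely summable with $\sum_n|k^\nu(n)|=2$, whence $\Vert\Delta^{-(\beta-1)}\mathcal{T}(n)\Vert=O(k^{\alpha+1}(n))=O(n^\alpha)$ and the quotient is $O(n^{\alpha-\beta})\to0$. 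In both cases $\Vert M_T^\beta(n)(I-T)\Vert\to0$.

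Granting this estimate, the remaining bookkeeping is routine. A $3\varepsilon$-argument combining the norm convergence on ${\rm{Ran}}(I-T)$ with the uniform bound $C$ extends $M_T^\beta(n)y\to0$ to all $y\in\overline{{\rm{Ran}}}(I-T)$, so $\overline{{\rm{Ran}}}(I-T)\subseteq X_\beta$ and $P_\beta=0$ there; as $P_\beta$ is the identity on ${\rm{Ker}}(I-T)$ and zero on $\overline{{\rm{Ran}}}(I-T)$, the two subspaces meet only at $0$. For ${\rm{Ran}}\,P_\beta={\rm{Ker}}(I-T)$ I would observe $(I-T)P_\beta x=\lim_n M_T^\beta(n)(I-T)x=0$, so $P_\beta x\in{\rm{Ker}}(I-T)$; with the earlier inclusion this gives equality and also $P_\beta^2=P_\beta$. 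For ${\rm{Ker}}\,P_\beta=\overline{{\rm{Ran}}}(I-T)$ I would use $x=\lim_n(I-M_T^\beta(n))x$ for $x\in{\rm{Ker}}\,P_\beta$ together with $(I-M_T^\beta(n))x\in{\rm{Ran}}(I-T)$ and closedness. Finally, $P_\beta$ being a bounded idempotent on $X_\beta$ yields $X_\beta={\rm{Ran}}\,P_\beta\oplus{\rm{Ker}}\,P_\beta={\rm{Ker}}(I-T)\oplus\overline{{\rm{Ran}}}(I-T)$.
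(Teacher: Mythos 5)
Your proof is correct and follows essentially the same route as the paper's: both reduce the lemma to showing $\Vert M_T^{\beta}(n)(I-T)\Vert\to 0$ by factoring through $k^{\beta-\alpha-1}\ast\Delta^{-\alpha}\mathcal{T}$, invoking the $(C,\alpha)$-bound together with the sign rule \eqref{sign} and \eqref{zero} in the two regimes $\beta\ge\alpha+1$ and $\beta-\alpha\in(0,1)$, and then carry out the same projection bookkeeping (identity on ${\rm{Ker}}(I-T)$, zero on $\overline{{\rm{Ran}}}(I-T)$, closedness for the converse inclusion). Your telescoping identity $\Delta^{-\beta}\mathcal{T}(n)(I-T)=k^{\beta}(n)I-T\Delta^{-(\beta-1)}\mathcal{T}(n)$ and the crude bound $\sum_{m}|k^{\beta-1-\alpha}(m)|=2$ plus monotonicity of $k^{\alpha+1}$ are only cosmetic variants of the paper's exact convolution computation, not a different method.
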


\begin{proof}
First, note that 
$T\Delta^{-\beta}\mathcal{T}(n)=\Delta^{-\beta}\mathcal{T}(n+1)-k^{\beta}(n+1)I,$ 
which implies  
$$TM_T^{\beta}(n)=\frac{\beta+n+1}{n+1}M_T^{\beta}(n+1)-\frac{\beta}{n+1}I.$$

For a given $x\in X_{\beta}$, letting $n\to\infty$ one obtains that there exists $P_\beta Tx=TP_\beta x=P_{\beta}x$. 
Hence, $P_\beta M_T^{\beta}(n)x=M_T^{\beta}(n)  P_\beta x
  = \displaystyle{1\over k^{\beta+1}(n)}\sum_{j=0}^n k^{\beta}(n-j)T^jP_\beta x
 ={1\over k^{\beta+1}(n)}(k^{\beta}\ast k^1)(P_\beta x)=P_\beta x$,
which implies that $P_\beta^2x=P_\beta x$. Therefore $P_\beta$ is a (linear) bounded projection on $X_{\beta}$.

Now, for $x\in X_{\beta}$,
$(I-T)P_\beta x=0$ so ${\rm{Ran}} P_\beta\subseteq{\rm {Ker}}(I-T)$. Conversely, if $Tx=x$ then
$M_T^{\beta}(n)x=x$ for all $n$, and therefore there exists $ P_\beta x=x$ in $X_{\beta}$. In short,
${\rm{Ran}} P_\beta ={\rm {Ker}}(I-T)$.

To see that ${\rm {Ker}} P_\beta =\overline{{\rm{Ran}}}(I-T)$, note that by Example \ref{ex22} (ii) one gets  
\begin{eqnarray*}\label{Media}
(I-T)\Delta^{-\beta}\mathcal{T}(n)&=&(I-T)(k^{\beta-\alpha}*k^{\alpha}*\Delta^{-\alpha}\mathcal{T})(n) \nonumber\\
&=&\sum_{j=0}^nk^{\beta-\alpha}(n-j)\Delta^{-\alpha}\mathcal{T}(j)-\sum_{j=0}^nk^{\beta-\alpha}(n-j)(\Delta^{-\alpha}\mathcal{T}(j+1)-k^{\alpha}(j+1)I) \nonumber\\
&=&k^{\beta-\alpha}(n)+\sum_{j=0}^nk^{\beta-\alpha}(n-j)k^{\alpha}(j+1)-\sum_{j=1}^{n+1}k^{\beta-\alpha-1}(n+1-j)\Delta^{-\alpha}\mathcal{T}(j)\\
&=&k^{\beta-\alpha}(n)+k^{\beta}(n+1)-k^{\beta-\alpha}(n+1)+k^{\beta-\alpha-1}(n+1) \nonumber \\
&&-\sum_{j=0}^{n+1}k^{\beta-\alpha-1}(n+1-j)\Delta^{-\alpha}\mathcal{T}(j) \nonumber \\
&=&k^{\beta}(n+1)-\sum_{j=0}^{n+1}k^{\beta-\alpha-1}(n+1-j)\Delta^{-\alpha}\mathcal{T}(j).\nonumber
\end{eqnarray*}
On the one hand $\displaystyle\frac{k^{\beta}(n+1)}{k^{\beta+1}(n)}\to 0$ as $n\to\infty.$ 
On the other hand, if $\beta\geq \alpha+1,$ by the $(C,\alpha)$-boundedness of $T,$ 
$\displaystyle\frac{1}{k^{\beta+1}(n)}\lVert \sum_{j=0}^{n+1}k^{\beta-\alpha-1}(n+1-j)\Delta^{-\alpha}\mathcal{T}(j)\rVert 
\leq \frac{k^{\beta}(n+1)}{k^{\beta+1}(n)}\to 0,\quad n\to\infty.$

If $\beta-\alpha\in (0,1),$ by \eqref{sign} we have 
\begin{displaymath}
\begin{array}{l}
\displaystyle\frac{1}{k^{\beta+1}(n)}\lVert \sum_{j=0}^{n+1}k^{\beta-\alpha-1}(n+1-j)\Delta^{-\alpha}\mathcal{T}(j)\rVert 
\leq\frac{1}{k^{\beta+1}(n)} \sum_{j=0}^{n+1}|k^{\beta-\alpha-1}(n+1-j)|k^{\alpha+1}(j)\\
\displaystyle=\frac{1}{k^{\beta+1}(n)}
\biggl(-\sum_{j=0}^{n+1}k^{\beta-\alpha-1}(n+1-j)k^{\alpha+1}(j)+2k^{\alpha+1}(n+1)  \biggr)\\
\displaystyle=\frac{1}{k^{\beta+1}(n)}(-k^{\beta}(n+1)+2k^{\alpha+1}(n+1))\to 0,\quad n\to\infty.
\end{array}
\end{displaymath}
So, we conclude that $M_T^{\beta}(n)(I-T)\to0$ as $n\to\infty$ for any $\beta>\alpha$. 
It follows that ${\rm{Ran}}(I-T)\subseteq X_{\beta}$, with $P_\beta(I-T)=0$ indeed. That is,
${\rm{Ran}}(I-T)\subseteq{\rm {Ker}} P_\beta$ and then $\overline{{\rm{Ran}}}(I-T)\subseteq{\rm {Ker}} P_\beta$.
Conversely, given $x$ in $X_{\beta}$ such that $P_\beta x=0$ then
 $x=\displaystyle\lim_{n\to\infty} {1\over k^{\beta+1}(n)}\sum_{j=0}^n k^{\beta}(n-j)(x-T^j x)
=\displaystyle\lim_{n\to\infty}(I-T)\left(\sum_{j=1}^n
 \displaystyle{k^{\beta}(n-j)\over k^{\beta+1}(n)}\sum_{l=0}^{j-1}T^l x\right)$,
 that is, ${\rm {Ker}} P_\beta \subseteq\overline{{\rm{Ran}}}(I-T)$ and the proof is over.
 \end{proof}

\begin{remark}\label{propSplit}
\normalfont
In accordance with Lemma \ref{dondeconverge}, for a given $(C,\alpha)$-bounded operator $T$ the direct sum
${\rm {Ker}}(I-T)\oplus\overline{{\rm{Ran}}}(I-T)$ is the largest subspace of $X$ on which there exists
$\lim_{n\to\infty}M_T^{\beta}(n)$, for each $\beta>\alpha.$
\end{remark}

The following result is the $(C,\alpha)$ mean ergodic extension quoted above.

\begin{theorem}\label{MeanErg}
Let $T$ be a $(C,\alpha)$-bounded operator, and $\beta>\alpha$. Then $T$ is $(C,\beta)$-ergodic if and only if
$X={\rm {Ker}}(I-T)\oplus\overline{{\rm{Ran}}}(I-T)$.
\end{theorem}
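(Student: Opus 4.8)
The plan is to deduce the theorem directly from Lemma \ref{dondeconverge} by unwinding the definition of $(C,\beta)$-ergodicity. Recall that $T$ is $(C,\beta)$-ergodic exactly when $P_\beta x:=\lim_{n\to\infty}M_T^{\beta}(n)x$ exists for \emph{every} $x\in X$; in the notation of Lemma \ref{dondeconverge}, this is precisely the assertion that $X_{\beta}=X$. Since the lemma already identifies $X_{\beta}$ as the direct sum ${\rm {Ker}}(I-T)\oplus\overline{{\rm{Ran}}}(I-T)$, the theorem amounts to transporting this equality.

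First I would check that the hypotheses of Lemma \ref{dondeconverge} are met: as $T$ is $(C,\alpha)$-bounded and $\beta>\alpha$, $T$ is also $(C,\beta)$-bounded (see Section \ref{Pre}), so $\sup_n\|M_T^{\beta}(n)\|<\infty$ and $X_{\beta}$ is a well-defined subspace to which the lemma applies. For the forward implication, if $T$ is $(C,\beta)$-ergodic then $X_{\beta}=X$, and the lemma yields $X={\rm {Ker}}(I-T)\oplus\overline{{\rm{Ran}}}(I-T)$. For the converse, if $X$ splits in this way, then by the lemma $X=X_{\beta}$, so $\lim_{n\to\infty}M_T^{\beta}(n)x$ exists for all $x\in X$; that is, $T$ is $(C,\beta)$-ergodic.

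There is essentially no obstacle at the level of the theorem itself: all the analytic work has been carried out in Lemma \ref{dondeconverge}, whose crux is the estimate $M_T^{\beta}(n)(I-T)\to0$ as $n\to\infty$ (treated there in the two cases $\beta\ge\alpha+1$ and $\beta-\alpha\in(0,1)$ via the sign pattern \eqref{sign} of the Ces\`aro numbers). The only point worth confirming separately is that the limiting operator $P_\beta$ is a bounded projection onto ${\rm {Ker}}(I-T)$; this too is supplied by the lemma, which gives ${\rm{Ran}}\,P_\beta={\rm {Ker}}(I-T)$ and ${\rm {Ker}}\,P_\beta=\overline{{\rm{Ran}}}(I-T)$, with boundedness following from $\sup_n\|M_T^{\beta}(n)\|<\infty$ together with the uniform boundedness principle. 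Finally, I would remark that since the right-hand side $X={\rm {Ker}}(I-T)\oplus\overline{{\rm{Ran}}}(I-T)$ does not involve $\beta$, the equivalence shows that $(C,\beta)$-ergodicity holds for one $\beta>\alpha$ if and only if it holds for all such $\beta$, recovering the formulation announced in the introduction.
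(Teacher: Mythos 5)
Your proposal is correct and follows essentially the same route as the paper: the authors also deduce Theorem \ref{MeanErg} immediately from Lemma \ref{dondeconverge} (phrased via Remark \ref{propSplit}, which restates the lemma as saying that ${\rm {Ker}}(I-T)\oplus\overline{{\rm{Ran}}}(I-T)$ is the largest subspace on which $\lim_{n\to\infty}M_T^{\beta}(n)$ exists). Your additional checks --- that $(C,\alpha)$-boundedness gives $(C,\beta)$-boundedness and that $P_\beta$ is a bounded projection --- are correct and simply make explicit what the paper leaves implicit.
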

\begin{proof}
It is enough to have into account Remark \ref{propSplit}.
\end{proof}

The following immediate corollary shows the significance of the above theorem.

\begin{corollary}\label{CorMeanErg} Let $T$ be a bounded operator on a Banach space $X$.
\begin{itemize}
\item[(i)] If $T$ is $(C,\alpha)$-ergodic on $X$ for some $\alpha>0$ then
$X={\rm {Ker}}(I-T)\oplus\overline{{\rm{Ran}}}(I-T)$.
\item[(ii)] If $T$ is power-bounded then $T$ is $(C,\beta)$-ergodic if and only if
$X={\rm {Ker}}(I-T)\oplus\overline{{\rm{Ran}}}(I-T)$ for every $\beta>0$.
\item[(iii)] Let $T$ be a $(C,\alpha)$-bounded operator with $0<\alpha<1$. Then $T$ is Ces\`aro mean ergodic if and only if $X={\rm {Ker}}(I-T)\oplus\overline{{\rm{Ran}}}(I-T)$.
\end{itemize}
\end{corollary}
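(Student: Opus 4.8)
The plan is to obtain all three parts from Theorem \ref{MeanErg}; parts (ii) and (iii) are immediate, and the only real work lies in (i). For (ii), recall that power-boundedness coincides with $(C,0)$-boundedness. Taking $\alpha=0$ and an arbitrary $\beta>0$, Theorem \ref{MeanErg} states verbatim that $T$ is $(C,\beta)$-ergodic if and only if $X={\rm Ker}(I-T)\oplus\overline{{\rm Ran}}(I-T)$, which is the claim. For (iii), Ces\`aro mean ergodicity is $(C,1)$-ergodicity; since $0<\alpha<1$ we have $1>\alpha$, so Theorem \ref{MeanErg} applies to the $(C,\alpha)$-bounded operator $T$ with $\beta=1$ and yields the equivalence at once. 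In both cases the point is that the splitting condition does not involve the summability index, so a single boundedness hypothesis governs the relevant ergodic behaviour.

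The substance is in (i): here the hypothesis gives $(C,\alpha)$-ergodicity of $T$ itself, whereas Theorem \ref{MeanErg} is phrased through $(C,\beta)$-ergodicity with $\beta>\alpha$ strictly. I would bridge this with the consistency of Ces\`aro summation for operators, namely that $(C,\alpha)$-ergodicity implies $(C,\beta)$-ergodicity (with the same limit $P_\alpha$) for every $\beta>\alpha$. Granting this, fix any $\beta>\alpha$: then $T$ is simultaneously $(C,\alpha)$-bounded and $(C,\beta)$-ergodic, so Theorem \ref{MeanErg} produces the splitting and (i) follows.

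To prove the consistency statement I would use the group property $k^{\beta-\alpha}\ast k^{\alpha}=k^{\beta}$ to write $\Delta^{-\beta}\mathcal{T}=k^{\beta-\alpha}\ast\Delta^{-\alpha}\mathcal{T}$, which gives
\[
M_T^{\beta}(n)=\sum_{j=0}^{n}w_{n,j}\,M_T^{\alpha}(j),\qquad w_{n,j}:=\frac{k^{\beta-\alpha}(n-j)\,k^{\alpha+1}(j)}{k^{\beta+1}(n)}.
\]
Since $\beta>\alpha$, the weights satisfy $w_{n,j}\ge0$ and, because $k^{\beta-\alpha}\ast k^{\alpha+1}=k^{\beta+1}$, one has $\sum_{j=0}^{n}w_{n,j}=1$; moreover, for each fixed $j$, the asymptotics \eqref{double2} give $k^{\beta-\alpha}(n-j)/k^{\beta+1}(n)=O(n^{-\alpha-1})$, so $w_{n,j}\to0$ as $n\to\infty$. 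Thus $(w_{n,j})$ is a regular (Silverman--Toeplitz) summability matrix, and applied to the $X$-valued sequence $(M_T^{\alpha}(j)x)_j$ it forces $M_T^{\beta}(n)x\to P_\alpha x$ for every $x\in X$ whenever $M_T^{\alpha}(j)x$ converges. This is exactly $(C,\beta)$-ergodicity.

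The main obstacle is precisely this consistency step, since (i) cannot be read off from Theorem \ref{MeanErg} alone: that theorem is stated for indices strictly above the order of boundedness. As an alternative to the Silverman--Toeplitz argument, one can instead re-run the proof of Lemma \ref{dondeconverge} directly at $\beta=\alpha$: the identities ${\rm Ran}\,P_\alpha={\rm Ker}(I-T)$ and $\overline{{\rm Ran}}(I-T)\subseteq{\rm Ker}\,P_\alpha$ are obtained as before, while the convergence $M_T^{\alpha}(n)(I-T)\to0$ now follows from $M_T^{\alpha}(n)\to P_\alpha$ together with the recursion $TM_T^{\alpha}(n)=\frac{\alpha+n+1}{n+1}M_T^{\alpha}(n+1)-\frac{\alpha}{n+1}I$; since $(C,\alpha)$-ergodicity means this holds on all of $X$, the splitting follows.
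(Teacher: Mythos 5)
Your proof is correct, and its skeleton is the paper's: the paper offers no proof at all, presenting the corollary as an ``immediate'' consequence of Theorem \ref{MeanErg}, and parts (ii) and (iii) are indeed just that theorem with $(\alpha,\beta)=(0,\beta)$ and $(\alpha,1)$ respectively, exactly as you argue. Where you add genuine content is in (i): you correctly point out that (i) is \emph{not} a literal instance of Theorem \ref{MeanErg}, since that theorem needs ergodicity at an order strictly above the order of boundedness, and you bridge the gap with the consistency statement that $(C,\alpha)$-ergodicity implies $(C,\beta)$-ergodicity (with the same limit projection) for every $\beta>\alpha$. This is precisely the fact the authors record, without proof, in the remark immediately following the corollary, where they say it ``can be shown directly from definitions'' and otherwise refer to resolvent-based arguments as in \cite{Ed04}; so the paper's implicit route for (i) is the same as yours, but leaves this step unproved. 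Your Silverman--Toeplitz implementation is a correct such direct proof: the weights $w_{n,j}=k^{\beta-\alpha}(n-j)\,k^{\alpha+1}(j)/k^{\beta+1}(n)$ are nonnegative because $\beta>\alpha$, have row sums equal to $1$ by the group property $k^{\beta-\alpha}\ast k^{\alpha+1}=k^{\beta+1}$, and have columns of size $O(n^{-\alpha-1})$ by \eqref{double2}, and the standard regularity argument goes through verbatim for $X$-valued sequences. Your alternative route --- re-running Lemma \ref{dondeconverge} at $\beta=\alpha$, deducing $M_T^{\alpha}(n)(I-T)\to0$ strongly from the recursion $TM_T^{\alpha}(n)=\frac{\alpha+n+1}{n+1}M_T^{\alpha}(n+1)-\frac{\alpha}{n+1}I$ together with $M_T^{\alpha}(n)\to P_\alpha$, instead of from the boundedness estimates --- is also valid (the identities $\mathrm{Ran}\,P_\alpha=\mathrm{Ker}(I-T)$ and $\mathrm{Ker}\,P_\alpha\subseteq\overline{\mathrm{Ran}}(I-T)$ indeed carry over unchanged), and is arguably the most self-contained way to get (i). In short: same approach as the paper, with the consistency lemma the paper merely asserts now actually proved.
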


When $\beta=1$, Corollary \ref{CorMeanErg} (ii) is the well known mean ergodic theorem cited in the beginning of this section.

\medskip
Next, we extend \cite [Theorem 5.1]{Su-Ze13} to the fractional case $\beta\geq 1$. The proof runs parallel to that one of \cite[Th. 5.1]{Su-Ze13}, though it needs \cite[Th. 4.3]{A} 
in our case.

\begin{theorem}\label{Character}
Let $\beta\geq 1$ and $T\in\mathcal{B}(X)$ such that $\sigma(T)\subset \mathbb{D}\cup \{1\}$. Then $T$ is a $(C,\beta)$-ergodic operator
if and only if $T$ is $(C,\beta)$-bounded and
$X={\rm {Ker}}(I-T)\oplus\overline{{\rm{Ran}}}(I-T)$.
\end{theorem}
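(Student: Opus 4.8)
The plan is to establish the two implications separately, noting that the forward implication is essentially immediate from results already proved, whereas the reverse implication is where the spectral hypothesis $\sigma(T)\subset\mathbb{D}\cup\{1\}$ does the real work: it is precisely what lets us treat the \emph{critical} case in which the order of boundedness and the order of ergodicity coincide, a case not covered by Lemma \ref{dondeconverge}.

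For necessity, suppose $T$ is $(C,\beta)$-ergodic. Then $M_T^\beta(n)x$ converges for every $x\in X$, so $\sup_n\|M_T^\beta(n)\|<\infty$ by the Banach--Steinhaus theorem; that is, $T$ is $(C,\beta)$-bounded. Since $\beta\geq1>0$, Corollary \ref{CorMeanErg}(i) then yields the splitting $X={\rm{Ker}}(I-T)\oplus\overline{{\rm{Ran}}}(I-T)$. The spectral hypothesis plays no role in this direction.

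For sufficiency, assume $T$ is $(C,\beta)$-bounded and $X={\rm{Ker}}(I-T)\oplus\overline{{\rm{Ran}}}(I-T)$; I would show that $M_T^\beta(n)$ converges strongly to the bounded projection $P_\beta$ of $X$ onto ${\rm{Ker}}(I-T)$ along $\overline{{\rm{Ran}}}(I-T)$. On ${\rm{Ker}}(I-T)$ this is trivial: if $Tx=x$ then $\Delta^{-\beta}\mathcal{T}(n)x=(k^\beta\ast k^1)(n)\,x=k^{\beta+1}(n)\,x$, so $M_T^\beta(n)x=x$ for every $n$. On the dense subspace ${\rm{Ran}}(I-T)$ of $\overline{{\rm{Ran}}}(I-T)$, writing a generic element as $y=(I-T)z$ reduces matters to proving $M_T^\beta(n)(I-T)\to0$. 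Granting this, the uniform bound $\sup_n\|M_T^\beta(n)\|<\infty$ supplied by $(C,\beta)$-boundedness, together with a standard approximation argument, extends the convergence $M_T^\beta(n)\to0$ from ${\rm{Ran}}(I-T)$ to all of $\overline{{\rm{Ran}}}(I-T)$; decomposing an arbitrary $x\in X$ along the direct sum then gives $M_T^\beta(n)x\to P_\beta x$, i.e. $(C,\beta)$-ergodicity.

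The crux, and the one point where the argument genuinely leaves the track of Lemma \ref{dondeconverge}, is the limit $M_T^\beta(n)(I-T)\to0$. In Lemma \ref{dondeconverge} this was obtained by exploiting boundedness at an order $\alpha$ \emph{strictly} below $\beta$, a margin that disappears once the two orders agree, so a new ingredient is required. Here the hypothesis $\sigma(T)\subset\mathbb{D}\cup\{1\}$ enters: its content is that $1$ is the only possible point of $\sigma(T)$ on the unit circle, which is exactly the Katznelson--Tzafriri type condition under which \cite[Th. 4.3]{A} applies to a $(C,\beta)$-bounded operator and forces $M_T^\beta(n)(I-T)\to0$. I expect the main technical step to be the verification that the hypotheses of \cite[Th. 4.3]{A} are in force; once that estimate is in hand, the remainder is the bookkeeping described above, running parallel to the proof of \cite[Th. 5.1]{Su-Ze13}.
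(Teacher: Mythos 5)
Your proposal is correct and follows essentially the same route as the paper: the forward implication is treated as immediate, and the reverse one splits $x$ along $X={\rm{Ker}}(I-T)\oplus\overline{{\rm{Ran}}}(I-T)$, handles the kernel trivially, reduces the range part to $M_T^{\beta}(n)(I-T)\to 0$ via \cite[Th. 4.3]{A} under the spectral hypothesis, and finishes by density plus the uniform bound from $(C,\beta)$-boundedness. The only cosmetic difference is that the paper makes the crux explicit through the identity $M_T^{\beta}(n)(a-Ta)=\frac{\beta}{n+1}\bigl(I-M_T^{\beta-1}(n+1)\bigr)a$ and invokes \cite[Th. 4.3]{A} in the equivalent form $\Vert M_T^{\beta-1}(n+1)\Vert=o(n)$, rather than as a Katznelson--Tzafriri statement for $M_T^{\beta}(n)(I-T)$ directly.
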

\begin{proof}
The implication which assumes the $(C,\beta)$-ergodicity of $T$ as hypothesis is clear. Now, let $T$ be a $(C,\beta)$-bounded operator on $X={\rm {Ker}}(I-T)\oplus\overline{{\rm{Ran}}}(I-T)$ with $\sigma(T)\subset \mathbb{D}\cup \{1\}$. Every $x\in X$ has the expression
$x=y+z$ with $y\in\overline{{\rm{Ran}}}(I-T)$ and $Tz=z$. Then it is enough to show that $M_T^{\beta}(n)y\to 0$ as $n\to\infty$ to prove the theorem.

Take $y=a-Ta$, $a\in X$. Then
$$
M_T^{\beta}(n)(a-Ta)=\displaystyle{\beta\over n+1}(I-M_T^{\beta-1}(n+1))a\to0,\quad \hbox{ as } n\to\infty
$$
since $\sigma(T)\subset \mathbb{D}\cup \{1\}$ and therefore $\Vert M_T^{\beta-1}(n+1)\Vert=o(n)$, as $n\to\infty$; see \cite[Th. 4.3]{A}. By density, one obtains
$\lim_{n\to\infty}M_T^{\beta}(n)y=0$ for all $y\in\overline{{\rm{Ran}}}(I-T)$.
\end{proof}

\begin{remark}
\normalfont
It can be shown directly from definitions that if $T$ is a $(C,\alpha)$-ergodic operator then $T$ is $(C,\beta)$-ergodic for every
$\beta>\alpha$. (This result is usually proved as a consequence of mean ergodic results involving the resolvent function of the operator; see \cite[Cor. 3.1]{Ed04} for example.) In this case, the projection operators $P_{\alpha}$ and $P_\beta$ are the same.
\end{remark}

\section{Functional calculus for Ces\`aro bounded operators}\label{CFcesar}
\setcounter{theorem}{0}
\setcounter{equation}{0}

Ces\`aro sums of order $\alpha$ enjoy an interesting multiplicative structure, see \cite[Th. 3.3]{ALMV}. In fact, the $(C,\alpha)$-boundedness is characterized by the existence of a bounded algebra homomorphism of a certain  weighted convolution 
Banach algebra of sequences into algebras of bounded linear operators (\cite[Cor. 3.7]{ALMV}). A copy of such a 
Banach algebra will serve as domain of a functional calculus, suitable  for our aims here.

Let us consider the action of $W^{-\alpha}$ on $\ell^1(k^{\alpha+1})$. We have
$$
\sum_{n=0}^\infty \vert W^{-\alpha}f(n)\vert
\le\sum_{n=0}^\infty\sum_{j=n}^\infty k^\alpha(j-n)\vert f(j)\vert
=\sum_{j=0}^\infty\sum_{n=0}^jk^{\alpha}(j-n)\vert f(j)\vert
=\sum_{j=0}^\infty k^{\alpha+1}(j)\vert f(j)\vert,
$$
for $f\in\ell^1(k^{\alpha+1})$ and $n\in\N_0$. In other words, $W^{-\alpha}\colon\ell^1(k^{\alpha+1})\to\ell^1$
is well defined and continuous.
Also, it is injective by (\ref{deuvedoble}) since $\ell^1(k^{\alpha+1})\hookrightarrow\ell^1(k^{\alpha})$.
Let $\tau^{\alpha}$ denote the Banach space
$W^{-\alpha}(\ell^1(k^{\alpha+1}))$ endowed with the norm transferred from that one of
$\ell^1(k^{\alpha+1})$. That is,
$\tau^{\alpha}$ is formed by the complex sequences $f$ of $\ell^1$ for which there exists a unique sequence
$W^\alpha f$ in $\ell^1(k^{\alpha+1})$ such that the series
$$
\|f\|_{(\alpha)}:=\sum_{n=0}^\infty |W^\alpha f(n)|k^{\alpha+1}(n)
$$
converges, and $\|f\|_{(\alpha)}$ is its norm. In this way,
$W^\alpha\colon\tau^\alpha\to\ell^1(k^{\alpha+1})$ is a surjective isometry with inverse
$W^{-\alpha}$. Note that $W^\alpha=D^\alpha$ on $\tau^\alpha$. Since $W^{-\alpha}$ takes $c_{00}$ onto itself it follows that $c_{00}$ is dense in $\tau^{\alpha}$.

Spaces $\tau^{\alpha}$ were introduced in \cite{GW} for $\alpha\in\N$. Their extensions to $\alpha>0$ have been defined in \cite[Th. 2.11]{ALMV} and \cite[Section 2]{A}, though with a slightly different presentation. Among other properties, these spaces satisfy the continuous inclusions
\begin{equation}\label{inclusion}
\tau^{\beta}\hookrightarrow\tau^{\alpha}
\hookrightarrow\ell^1,\ \text{ for }\beta>\alpha>0.
\end{equation}
Note also that $k^{\beta}\in \tau^{\alpha}$ if $\Real \beta<0$ or $\beta=0,$ for all $\alpha\geq 0$.

Moreover, the space $\tau^{\alpha}$ is a Banach algebra in the sense that there exists a (nonnecessarily equal to one) constant $M_\alpha$ such that
$\|f\ast g\|_{(\alpha)}\leq M_{\alpha}\|f\|_{(\alpha)}\|g\|_{(\alpha)}$
for $f,g\in\tau^{\alpha}$, where $\ast$ is the sequence convolution,
see \cite[Th. 2.11]{ALMV}. The description of the Gelfand transform of the Banach algebra $\tau^{\alpha}$ is quite simple:

For $f\in\tau^{\alpha},$ we consider the associated holomorphic function $\mathfrak{f}$ on the unit disc $\D$ (and continuous on $\overline{\D}$) given by
$\mathfrak{f}(z):=\sum_{n=0}^{\infty}f(n) z^n$.
Define
$A^{\alpha}(\D):=\{\mathfrak{f}\,:\, f\in\tau^{\alpha}\}$, 
endowed with pointwise multiplication and the norm $\|\mathfrak{f}\|_{A^{\alpha}(\D)}:=\|f\|_{(\alpha)}$. Thus
$A^{\alpha}(\D)$ and $\tau^{\alpha}$ are Banach algebras isometrically isomorphic.
The correspondence
$f\in\tau^{\alpha}\mapsto\mathfrak{f}\in A^{\alpha}(\D)$
is the Gelfand transform of $\tau^{\alpha}$, with range $A^{\alpha}(\D)$. It can be given in terms of Weyl differences.
Namely, for $\alpha\ge0$ and $n\in\N_0$, set
$$
\Delta^{-\alpha}\mathcal{Z}(n)
:=\sum_{j=0}^n k^{\alpha}(n-j)z^j, \quad z\in\overline{\D}.
$$
Clearly,
$\vert \Delta^{-\alpha}\mathcal{Z}(n)\vert\le\sum_{j=0}^nk^{\alpha}(n-j)\vert z\vert^j
\le\sum_{j=0}^n k^{\alpha}(n-j)=k^{\alpha+1}(n)$ uniformly on $\overline{\D}$. For estimates on compact subsets
$Q$ of $\D$, we have for $z\in Q$,
\begin{equation}\label{deltaestimate}
\vert \Delta^{-\alpha}\mathcal{Z}(n)\vert\le\sum_{j=0}^nk^{\alpha}(n-j)\vert z\vert^j
\le k^{\alpha}(n){1-\vert z\vert^{n+1}\over 1-\vert z\vert}\le M_Q k^{\alpha}(n)
\end{equation}
if $\alpha\ge1$ since $k^\alpha$ is increasing in this case, and
\begin{equation}\label{deltaestimate2}
\vert \Delta^{-\alpha}\mathcal{Z}(n)\vert\le\sum_{j=0}^nk^{\alpha}(n-j)\vert z\vert^j
\le k^{\alpha}(0){1-\vert z\vert^{n+1}\over 1-\vert z\vert}\le M_Q
\end{equation}
when $0<\alpha<1$, since $k^{\alpha}$ is decreasing now. Here $M_Q$ is a
constant depending on $Q$.

\medskip
Let $\mathfrak{f}(z)=\sum_{n=0}^{\infty}f(n) z^n$ be a holomorphic function
in $A^{\alpha}(\D)$. Using Fubini's theorem (for series) in the standard way,
it is readily seen that
\begin{equation}\label{int_rep}
\mathfrak{f}(z)
=\sum_{n=0}^{\infty}W^{\alpha}f(n)\Delta^{-\alpha}\mathcal{Z}(n), \quad z\in\overline{\D},
\end{equation}
where the series converges absolutely in $\overline\D$. Thus in particular
$\mathfrak{f}(1)=\sum_{n=0}^{\infty}W^{\alpha}f(n) k^{\alpha+1}(n)$.

\begin{example}\label{ex43}
\normalfont
Let $s\in \R$ and set
$\mathfrak{k}^{s}(z):=(1-z)^{-s}
=\sum_{n=0}^{\infty} k^{s}(n)z^n$, $z\in\D$.
Let $\alpha>0$. Clearly, $\mathfrak{k}^{s}\in A^{\alpha}(\D)$ if $ s\leq 0$ and
$\mathfrak{k}^{s}\notin A^{\alpha}(\D)$ for $s>0$, see \eqref{double2}. In particular,  for
$ s< 0$   one gets
\begin{equation}\label{identity1-1}
0=\mathfrak{k}^{s}(1)=\sum_{n=0}^{\infty}W^{\alpha}k^{s}(n) k^{\alpha+1}(n).
\end{equation}

Now, take $s\in (1,2)$ and $m\in\N.$ Then, by Example \ref{ex22}, \eqref{binom}, \eqref{sign} and \eqref{identity1-1}, one gets  
\begin{eqnarray*}
\|\mathfrak{k}^{-s}-\mathfrak{k}^{-1}\|_{A^{m}(\D)}
&=&\sum_{n=0}^{\infty}|W^{m}k^{-s}(n)-W^{m}k^{-1}(n)| k^{m+1}(n)\\
&=&\sum_{n=0}^{1}|k^{-s-m}(n+m)-k^{-1-m}(n+m)| k^{m+1}(n)-\sum_{n=0}^{1}W^{m}k^{-s}(n) k^{m+1}(n)\\
&\longrightarrow&
-\binom{m+1}{m}+(m+1)\binom{m+1}{m+1}=0, \hbox{ as } s\to 1^+.
\end{eqnarray*}

Moreover, by \eqref{inclusion} we have indeed
\begin{equation}\label{-sto-1}
\lim_{s\to 1^+}\|\mathfrak{k}^{-s}-\mathfrak{k}^{-1}\|_{A^{\alpha}(\D)}= 0 \hbox{  for all } \alpha\geq 0.
\end{equation}
\end{example}

\begin{remark}\label{integral_representation}
\normalfont
Representation \eqref{int_rep} entails some uniqueness properties. To begin with,
take $\alpha=m\in\N$. Then using the Cauchy formula for derivatives -which is to say, for the coefficients $f(n)$- one shows that
$$
W^m f(n)
={1\over 2\pi i}\int_{\vert \lambda\vert=r}
{(\lambda-1)^m\over \lambda^{m+n+1}}\mathfrak f(\lambda)\ d\lambda,\quad n\in\N_0.
$$

More generally, assume that $g$ is a sequence on $\N_0$ such that
$$
0=\sum_{n=0}^{\infty}g(n)\Delta^{-m}\mathcal{Z}(n)
$$
identically in $\D$. Then $g$ must be the null sequence. To see this, first note 
\begin{equation}\label{finitesum}
\Delta^{-m}\mathcal{Z}(n)
:=\sum_{j=0}^m k^m(n-j)z^j
={1\over(z-1)^m}\left(z^{m+n}+\sum_{j=1}^{m-1}P_{m,j}(n)z^{m-j}
+(-1)^m k^{m}(n)\right)
\end{equation}
for all $z\in\D$, where $P_{m,j}$ are polynomials of degree $m-1$ at most. This equality can be obtained by induction in $m$, on account of the identity
$\Delta^{-p}\mathcal{Z}(n)=\sum_{j=0}^n\Delta^{-(p-1)}\mathcal{Z}(j)$; $p,n\in\N$.

Evaluating at $z=0$ one gets $0=\sum_{n=0}^{\infty}g(n)k^{m+1}(n)$ and therefore
$$
0=\sum_{n=0}^{\infty}g(n)\left(z^{m+n}+\sum_{j=1}^{m-2}P_{m,j}(n)z^{m-j}+P_{m,m-1}(n)z\right)
$$
 for all $z\in\D$. Dividing by $z\not=0$ and then evaluating again the resulting polynomial at $z=0$, one obtains
$0=\sum_{n=0}^{\infty}g(n)P_{m,m-1}(n)$. By repetition of the argument we eventually arrive at
$\sum_{n=0}^{\infty}g(n)z^n=0$ ($z\in\D$), whence obviously $g(n)=0$.

\medskip
When $\alpha>0$ is arbitrary it is not clear to us whether or not the above  result on annihilation of the coefficients $g(n)$ holds true. However, it is still possible to obtain a weaker uniqueness property that is enough for the application we have in mind (see Theorem \ref{BAI}).

\begin{lemma}\label{Unicity}
For $\alpha>0$ and $n\in\N_0$ set $\omega_\alpha(n):=k^\alpha(n)$ if $\alpha\ge1$,
and
$\omega_\alpha(n):=1$ when $0<\alpha<1$.
Let $g$ be a sequence such that
$\sum_{n=0}^{\infty}\vert g(n)\vert
\omega_\alpha(n)<\infty$. Assume that
$$
\sum_{n=0}^{\infty}g(n)\Delta^{-\alpha}\mathcal{Z}(n)=0, \quad z\in\D.
$$
Then $g(n)=0$ for all $n\in\N_0$.
\end{lemma}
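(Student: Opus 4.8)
The plan is to show that the hypothesis is equivalent to the vanishing of the Weyl sum $W^{-\alpha}g$, after which $g=0$ follows immediately from the injectivity of $W^{-\alpha}$ on $\ell^1(k^\alpha)$ that was already recorded in \eqref{deuvedoble}. The point of passing through $W^{-\alpha}g$ is exactly that one cannot, for general $\alpha>0$, read off the coefficients $g(n)$ directly from the function $\sum_n g(n)\Delta^{-\alpha}\mathcal Z(n)$ (this is the gap acknowledged just before the lemma); the rearrangement below sidesteps that difficulty.

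First I would check that the summability assumption places $g$ in $\ell^1(k^\alpha)$ in both regimes. For $\alpha\ge1$ this is literally the hypothesis, since $\omega_\alpha=k^\alpha$. For $0<\alpha<1$ the sequence $k^\alpha$ is decreasing, so $k^\alpha(n)\le k^\alpha(0)=1=\omega_\alpha(n)$ and hence $\sum_n|g(n)|k^\alpha(n)\le\sum_n|g(n)|\omega_\alpha(n)<\infty$. Together with the bounds \eqref{deltaestimate} and \eqref{deltaestimate2}, which give $\sum_{j=0}^n k^\alpha(n-j)|z|^j\le M_Q\,\omega_\alpha(n)$ uniformly for $z$ in a compact set $Q\subset\D$, this yields that the double series
$$
\sum_{n=0}^{\infty}|g(n)|\sum_{j=0}^n k^{\alpha}(n-j)|z|^j\le M_Q\sum_{n=0}^{\infty}|g(n)|\omega_\alpha(n)<\infty
$$
converges absolutely on each such $Q$; in particular the function in the statement is holomorphic on $\D$.

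With absolute convergence of the double series in hand, I would apply Fubini for series to interchange the order of summation. Writing $\Delta^{-\alpha}\mathcal Z(n)=\sum_{j=0}^n k^\alpha(n-j)z^j$ and summing first over $n$ for fixed $j$, the hypothesis becomes
$$
0=\sum_{j=0}^{\infty}\Bigl(\sum_{n=j}^{\infty}g(n)k^{\alpha}(n-j)\Bigr)z^j=\sum_{j=0}^{\infty}W^{-\alpha}g(j)\,z^j,\qquad z\in\D,
$$
where the inner sum is precisely $W^{-\alpha}g(j)$ by Definition \ref{WeylDifference} and is absolutely convergent because $\sum_{n=j}^{\infty}|g(n)|k^\alpha(n-j)\le\sum_n|g(n)|\omega_\alpha(n)<\infty$ (using $k^\alpha(n-j)\le k^\alpha(n)$ for $\alpha\ge1$ and $k^\alpha(n-j)\le1$ for $0<\alpha<1$). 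Since a power series vanishing identically on $\D$ has all coefficients equal to zero, it follows that $W^{-\alpha}g(j)=0$ for every $j\in\N_0$.

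Finally, because $g\in\ell^1(k^\alpha)$, the identity \eqref{deuvedoble} gives $g=D^\alpha(W^{-\alpha}g)=D^\alpha(0)=0$, which is the desired conclusion. The only step demanding genuine care is the termwise rearrangement, i.e. the absolute convergence of the double series; this is exactly why the weight $\omega_\alpha$ is tailored to the two estimates \eqref{deltaestimate} and \eqref{deltaestimate2}. Once that bookkeeping is granted, the lemma reduces cleanly to the injectivity of $W^{-\alpha}$, so I do not expect any further obstacle.
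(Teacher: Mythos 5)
Your proposal is correct and follows essentially the same route as the paper's own proof: absolute convergence of the double series, Fubini to rewrite the hypothesis as $\sum_{j}W^{-\alpha}g(j)z^j=0$, vanishing of power-series coefficients, and then $g=D^\alpha(W^{-\alpha}g)=0$ via \eqref{deuvedoble}. The only cosmetic difference is that you justify the rearrangement with the compact-set bounds \eqref{deltaestimate}--\eqref{deltaestimate2}, whereas the paper bounds the double sum pointwise by $M_{\alpha,g}/(1-\vert z\vert)$; both are valid.
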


\begin{proof}
Set $M_{\alpha,g}:=\sum_{n=0}^\infty \omega_\alpha(n)\vert g(n)\vert$.
For $z\in\D$, we have
$$
\sum_{n=0}^\infty \vert g(n)\vert
\sum_{j=0}^n k^{\alpha}(n-j)\vert z\vert^j
=
\sum_{j=0}^\infty \sum_{n=j}^\infty k^{\alpha}(n-j)\vert g(n)\vert \vert z\vert^j
\le  {M_{\alpha,g}\over 1-\vert z\vert}<\infty,
$$
whence
$$
0=\sum_{n=0}^{\infty}g(n)\Delta^{-\alpha}\mathcal{Z}(n)
=\sum_{j=0}^{\infty}z^j\sum_{n=j}^{\infty}k^{\alpha}(n-j)g(n)
=\sum_{j=0}^{\infty}W^{-\alpha}g(j)z^j
$$
for every $z\in\D$, so that $W^{-\alpha}g(j)=0$ for all $j$. Now, $g\in\ell^1(k^\alpha)$ by hypothesis and therefore, 
by (\ref{deuvedoble}),
it follows  that $g=0$.
\end{proof}
\end{remark}

\medskip
A linear bounded operator $T\in{\mathcal B}(X)$ is $(C,\alpha)$-bounded if and only if 
there exists a Banach algebra bounded homomorphism  $\Theta_{\alpha}:\tau^{\alpha}\to \mathcal{B}(X)$, 
which furthermore is given by
$$
\Theta_{\alpha}(f)x=\sum_{n=0}^{\infty}W^{\alpha}f(n)\Delta^{-\alpha} \mathcal{T}(n)x,\quad x\in X,\,f\in \tau^{\alpha};
$$
see \cite[Theorem 3.5]{ALMV}. Neatly, such a homomorphism defines a functional calculus $\Phi_{\alpha}$ 
on functions of $A^{\alpha}(\D)$ given by
$\Phi_{\alpha}(\mathfrak{f}):=\Theta_{\alpha}(f)$, for $\mathfrak{f}(z)=\sum_{n=0}^\infty f(n)z^n$.
If $K_{\alpha}(T):=\sup_{n\in \N_0}\left\| M_T^{\alpha}(n) \right\|,$ then
\begin{equation}\label{ine}
\lVert\Phi_{\alpha}(\mathfrak{f})\rVert\leq K_{\alpha}(T)\lVert \mathfrak{f}\rVert_{A^{\alpha}(\D)}.
\end{equation}

We will denote the above functional calculus on $A^{\alpha}(\D)$ by $\Phi_\alpha(\mathfrak f)$ or
$\mathfrak{f}(T)$ indistinctly. 
It makes sense to apply (to arbitrary $\alpha)$ the regularization process, considered in \cite{HT} for $\alpha=0$, 
see \cite{Haase}.

\begin{definition}\label{a-reg}
\normalfont
Let $\alpha>0$ and let $T$ be a $(C,\alpha)$-bounded operator. We say that a function $\mathfrak{f}$ holomorphic in $\D$ is
$\alpha$-{\it regularizable}
if there is an element $\mathfrak{e}\in A^{\alpha}(\D)$ such that $\mathfrak{e}\mathfrak{f}\in A^{\alpha}(\D)$ and 
$\mathfrak{e}(T)$ is an injective operator. In such a case, put
$$
\mathfrak{f}(T):=\mathfrak{e}(T)^{-1}(\mathfrak{e}\mathfrak{f})(T).
$$
\end{definition}

The so-defined $\mathfrak{f}(T)$ does not depend on the $\alpha$-regularizer $\mathfrak{e}$ (see \cite[Lemma 1.2.1]{Haase}) and $\mathfrak{f}(T)$ is a closed operator.

Next, we connect the calculus $\Phi_{\alpha}$ with the calculus for sectorial operators, following \cite{HT}. Details for arbitrary $\alpha>0$ are included for the sake of completeness.

\begin{lemma}\label{Sector}
The operator $A:=I-T$ satisfies
$$
\left\| (\lambda-A)^{-1} \right\|
\leq K_{\alpha}(T)\frac{|\lambda|^{\alpha}}{(|\lambda-1|-1)^{\alpha+1}}
\leq K_{\alpha}(T)\frac{|\lambda|^{\alpha}}{|\Real \lambda|^{\alpha+1}},
\quad \Real \lambda<0.
$$
\end{lemma}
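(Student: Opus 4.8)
The plan is to reduce the resolvent of $A=I-T$ to a power series in $T$ and then control that series by the Ces\`aro means $M_T^\alpha(n)$, whose norms are bounded by $K_\alpha(T)$. First I would substitute $\mu:=1-\lambda$, so that $\lambda-A=\lambda-I+T=-(\mu I-T)$. Since $\Real\lambda<0$ we have $\Real\mu=1-\Real\lambda>1$, hence $|\mu|=|1-\lambda|>1$; putting $z:=\mu^{-1}=(1-\lambda)^{-1}$ we get $|z|<1$ and $(\lambda-A)^{-1}=-z(I-zT)^{-1}$, so it suffices to estimate $\|(I-zT)^{-1}\|$ for $|z|<1$.

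The key step is the resolvent representation
$$
(I-zT)^{-1}=(1-z)^{\alpha}\sum_{n=0}^\infty \Delta^{-\alpha}\mathcal{T}(n)\,z^n,\qquad |z|<1,
$$
which I would prove as follows. By $(C,\alpha)$-boundedness, $\|\Delta^{-\alpha}\mathcal{T}(n)\|=k^{\alpha+1}(n)\|M_T^\alpha(n)\|\le K_\alpha(T)k^{\alpha+1}(n)$, and since $\sum_n k^{\alpha+1}(n)|z|^n=(1-|z|)^{-(\alpha+1)}<\infty$ the series $S(z):=\sum_n \Delta^{-\alpha}\mathcal{T}(n)z^n$ converges absolutely in operator norm for $|z|<1$. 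Using the identity $T\Delta^{-\alpha}\mathcal{T}(n)=\Delta^{-\alpha}\mathcal{T}(n+1)-k^\alpha(n+1)I$ (already exploited in the proof of Lemma \ref{dondeconverge}) together with $\Delta^{-\alpha}\mathcal{T}(0)=I$ and $\sum_{n\ge1}k^\alpha(n)z^n=(1-z)^{-\alpha}-1$, a telescoping computation gives $(I-zT)S(z)=(1-z)^{-\alpha}I$, and symmetrically $S(z)(I-zT)=(1-z)^{-\alpha}I$ because $\mathcal{T}(n)$ commutes with $T$; multiplying by $(1-z)^\alpha$ yields the displayed formula.

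With the representation in hand, the estimate is immediate:
$$
\|(I-zT)^{-1}\|\le |1-z|^\alpha\sum_{n=0}^\infty \|\Delta^{-\alpha}\mathcal{T}(n)\|\,|z|^n\le K_\alpha(T)\,\frac{|1-z|^\alpha}{(1-|z|)^{\alpha+1}}.
$$
Then I would return to the variable $\lambda$ via $z=(1-\lambda)^{-1}$, computing $|z|=|\lambda-1|^{-1}$, $|1-z|=|\lambda|/|\lambda-1|$ and $1-|z|=(|\lambda-1|-1)/|\lambda-1|$, and multiply by the extra factor $|z|$ coming from $(\lambda-A)^{-1}=-z(I-zT)^{-1}$. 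After the powers of $|\lambda-1|$ cancel, this collapses exactly to
$$
\|(\lambda-A)^{-1}\|\le K_\alpha(T)\,\frac{|\lambda|^\alpha}{(|\lambda-1|-1)^{\alpha+1}},
$$
which is the first inequality. For the second inequality I would only need the elementary geometric fact that, when $\Real\lambda<0$, one has $|\lambda-1|\ge|\Real(\lambda-1)|=1-\Real\lambda=1+|\Real\lambda|$, so $|\lambda-1|-1\ge|\Real\lambda|$ and replacing $(|\lambda-1|-1)^{\alpha+1}$ by the smaller $|\Real\lambda|^{\alpha+1}$ only enlarges the bound. I expect the one genuinely delicate point to be the justification of the resolvent representation: one must verify not merely the formal generating-function identity for $k^\alpha\ast\mathcal{T}$ but the true operator-norm convergence and the fact that $(1-z)^\alpha S(z)$ is a two-sided inverse of $I-zT$, since $T$ is \emph{not} assumed power-bounded and the naive Neumann series $\sum z^nT^n$ need not converge. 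Once the telescoping identity is set up this is routine, and the remaining steps are purely algebraic and geometric.
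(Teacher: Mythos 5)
Your proof is correct, and its core is the same as the paper's: both rest on the resolvent representation $(\mu-T)^{-1}=\frac{(\mu-1)^{\alpha}}{\mu^{\alpha}}\sum_{n\geq 0}\mu^{-n-1}\Delta^{-\alpha}\mathcal{T}(n)$ for $|\mu|>1$ (yours is the same identity written in the variable $z=\mu^{-1}$ after the substitution $\mu=1-\lambda$), followed by the triangle inequality, the bound $\|\Delta^{-\alpha}\mathcal{T}(n)\|\leq K_{\alpha}(T)k^{\alpha+1}(n)$, and the generating function \eqref{generating} for $k^{\alpha+1}$; the final geometric step $|\lambda-1|-1\geq|\Real\lambda|$ is also identical. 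The genuine difference is how the representation is justified: the paper quotes it from \cite[Th. 4.4]{ALMV} together with the eigenfunction computation $W^{\alpha}p_{\mu}=\mu^{-\alpha}(\mu-1)^{\alpha}p_{\mu}$ of Example \ref{ex22} (i), and it invokes the spectral radius bound $r(T)\leq 1$ from \cite[Lemma 1.1]{A} so that $(\mu-T)^{-1}$ is known to exist before being represented; you instead verify directly, via absolute convergence of $S(z)=\sum_{n}\Delta^{-\alpha}\mathcal{T}(n)z^{n}$ and the telescoping identity $T\Delta^{-\alpha}\mathcal{T}(n)=\Delta^{-\alpha}\mathcal{T}(n+1)-k^{\alpha}(n+1)I$ (the same identity the paper uses in Lemma \ref{dondeconverge}), that $(1-z)^{\alpha}S(z)$ is a two-sided inverse of $I-zT$. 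Your route is self-contained and obtains the invertibility of $\lambda-A$ for free, at the cost of reproving what the $\tau^{\alpha}$-calculus framework already supplies; both arguments collapse to exactly the same estimate.
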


\begin{proof} We do notice that the spectral radius of $T$ is less than or equal to $1$, see
\cite[Lemma 1.1]{A}.
Let $\mu$ be a complex number such that $|\mu|>1$. By
\cite[Th. 4.4]{ALMV} and Example \ref{ex22} (i) one has
$$
(\mu-T)^{-1}
=\sum_{n=0}^{\infty}W^{\alpha}p_{\mu}(n)\Delta^{-\alpha}\mathcal{T}(n)
=\frac{(\mu-1)^{\alpha}}{\mu^{\alpha}}\sum_{n=0}^{\infty}\mu^{-n-1}\Delta^{-\alpha}\mathcal{T}(n).
$$
Then for $\Real\lambda<0$ we have
\begin{eqnarray*}
\lVert(\lambda-A)^{-1}\rVert&\leq&
K_{\alpha}(T)\frac{|\lambda|^{\alpha}}{|\lambda-1|^{\alpha}}
\sum_{n=0}^{\infty}|\lambda-1|^{-n-1}k^{\alpha+1}(n)\\
&=& K_{\alpha}(T)\frac{|\lambda|^{\alpha}}{(|\lambda-1|-1)^{\alpha+1}}
\leq K_{\alpha}(T)\frac{|\lambda|^{\alpha}}{|\Real\lambda|^{\alpha+1}},
\end{eqnarray*}
where we have used the identity \eqref{generating}.
\end{proof}

For $\theta\in (0,\pi)$ let $S_\theta$ denote the sector of angle $2\theta$ in the complex plane, which is symmetric with respect to the half-line $(0,\infty)$. Then
$|\lambda| / |\Real \lambda|\leq|\cos(\pi-\omega)|^{-1}$ for every
$\lambda\in \C\setminus \overline S_{\omega}$, with $\omega\in (\pi/2,\pi)$. From this,
it follows by Lemma \ref{Sector} that the operator $A=I-T$ is {\it sectorial} of angle $\pi/2$, that is,
the spectrum $\sigma(A)$ is contained in $\overline S_{\pi/2}$ and, for every
$\omega\in (\pi/2,\pi)$
and a constant $K_{\omega}$,
$$
\lVert \lambda(\lambda-A)^{-1}\rVert\leq K_{\omega},
\quad \lambda\in \C\setminus \overline S_{\omega}.
$$

It is well known that sectorial operators enjoy a remarkable functional calculus,
see \cite[Chapter 2]{Haase}. Namely,
for $\omega\in (\pi/2,\pi)$ let $\mathcal{E}_0(S_\omega)$ denote the space of holomorphic functions $\mathfrak{h}$ on
$S_\omega$ such that
$|\mathfrak{h}(z)|\leq K|z|^s$ for all $z\in S_\omega\cap \D$ for some constants $K,s>0$.
Take $\Lambda$ the oriented counterclockwise path given by
\begin{align*}
\Lambda := \{ r e^{i\varphi} : r\in[0, r_0] \} \cup \{r_0e^{i\psi} :
\psi\in(-\varphi,\varphi)\}\cup  \{ r e^{-i\varphi} : r\in[0, r_0] \},
\end{align*}
with $r_0>2$ and
$\pi/2<\varphi<\omega$.

Set $A=I-T$. Then the integral
\begin{equation}\label{hol1}
\mathfrak{h}(A)
=\frac{1}{2\pi i}\int_{\Lambda}\mathfrak{h}(\lambda)(\lambda-A)^{-1}\,d\lambda,
\quad \mathfrak{h}\in \mathcal{E}_0(S_\omega)
\end{equation}
makes sense, does not depend on the choice of $\varphi$ and defines a functional calculus on
$\mathcal{E}_0(S_\omega)$ (see \cite[p.46]{Haase}).
Moreover, when $\hbox{Ker}(I-T)=\{0\}$, one can extend the above calculus by regularization via powers of $A,$ see \cite[p.46]{Haase}.

We claim that the above sectorial functional calculus coincides, through the change of variable $z\mapsto(1-z)$, with the extended (by regularization)
 functional calculus $\Phi_{\alpha}$ on $A^{\alpha}(\D)$ introduced formerly. To prove that, it is enough by \cite[Prop. 1.2.7]{Haase} to check that both primary functional calculus, on
$\mathcal{E}_0(S_\varphi)$ and $A^{\alpha}(\D)$ respectively, coincide.

\begin{theorem}\label{Sectorial} Let $\omega\in (\pi/2,\pi)$,
$\mathfrak{h}\in \mathcal{E}_0(S_\omega)$ and $\mathfrak{f}(z):=\mathfrak{h}(1-z)$ for $z\in\D$. Then
$$
\mathfrak{f}\in A^{\alpha}(\D)\ \hbox{ and }\ \Phi_\alpha(\mathfrak{f})=\mathfrak{h}(A).
$$
\normalfont
\end{theorem}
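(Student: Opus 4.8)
The plan is to realise $\mathfrak{f}=\mathfrak{h}(1-\,\cdot\,)$ as a Bochner integral in the Banach algebra $A^{\alpha}(\D)$ and then push the bounded homomorphism $\Phi_\alpha$ inside the integral. First I would record the scalar Cauchy representation
$$
\mathfrak{f}(z)=\mathfrak{h}(1-z)=\frac{1}{2\pi i}\int_{\Lambda}\frac{\mathfrak{h}(\lambda)}{\lambda-(1-z)}\,d\lambda,\qquad z\in\D,
$$
which is just \eqref{hol1} for the scalar ``operator'' $1-z$. This is legitimate because $1-z$ lies strictly inside $\Lambda$: for $z\in\D$ the point $w=1-z$ satisfies $|w-1|=|z|<1$, so $|w|<2<r_0$ and $\arg w\in(-\pi/2,\pi/2)\subset(-\varphi,\varphi)$ since $\varphi>\pi/2$. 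The only delicate point is the tip $\lambda=0$ of the contour, where $\mathfrak h$ is merely holomorphic on the sector; but the bound $|\mathfrak h(\lambda)|\le K|\lambda|^{s}$ lets one excise a small arc of radius $\varepsilon$ around $0$, apply the residue theorem on the truncated sector, and let $\varepsilon\to0$.

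The next step is to rewrite the Cauchy kernel in terms of the eigenfunctions $p_\mu$. With $\mathfrak{p}_\mu(z):=\sum_{n\ge0}p_\mu(n)z^n=(\mu-z)^{-1}$ for $|\mu|>1$, and $\mu=1-\lambda$, one has $(\lambda-(1-z))^{-1}=-\mathfrak{p}_{1-\lambda}(z)$, so that
$$
\mathfrak{f}=-\frac{1}{2\pi i}\int_{\Lambda}\mathfrak{h}(\lambda)\,\mathfrak{p}_{1-\lambda}\,d\lambda .
$$
By Example \ref{ex22}(i), $W^{\alpha}p_\mu=\mu^{-\alpha}(\mu-1)^{\alpha}p_\mu$, and summing the resulting geometric-type series via \eqref{generating} gives the explicit norm
$$
\|\mathfrak{p}_{\mu}\|_{A^{\alpha}(\D)}=\frac{|\mu-1|^{\alpha}}{(|\mu|-1)^{\alpha+1}},\qquad\text{so that}\qquad \|\mathfrak{p}_{1-\lambda}\|_{A^{\alpha}(\D)}=\frac{|\lambda|^{\alpha}}{(|1-\lambda|-1)^{\alpha+1}},
$$
which is exactly the resolvent bound of Lemma \ref{Sector} with $K_\alpha(T)=1$. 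To conclude that $\mathfrak{f}\in A^{\alpha}(\D)$ I would check that $\lambda\mapsto\mathfrak{h}(\lambda)\mathfrak{p}_{1-\lambda}$ is absolutely Bochner integrable, i.e. that $\int_{\Lambda}|\mathfrak{h}(\lambda)|\,|\lambda|^{\alpha}(|1-\lambda|-1)^{-(\alpha+1)}\,|d\lambda|<\infty$. Away from the origin this is trivial by continuity and compactness (here $\varphi>\pi/2$ guarantees $|1-\lambda|>1$ on $\Lambda\setminus\{0\}$), while near the tip the expansion $|1-\lambda|-1\sim|\cos\varphi|\,|\lambda|$ together with $|\mathfrak{h}(\lambda)|\le K|\lambda|^{s}$ makes the integrand $O(|\lambda|^{s-1})$, which is integrable precisely because $s>0$. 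Since point evaluation is continuous on $A^{\alpha}(\D)$, the value of the Bochner integral at each $z$ equals the scalar integral, hence it coincides with $\mathfrak f$.

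Finally, for the operatorial identity I would use that $\Phi_\alpha\colon A^{\alpha}(\D)\to\mathcal B(X)$ is bounded by \eqref{ine}, whence it commutes with the Bochner integral:
$$
\Phi_\alpha(\mathfrak{f})=-\frac{1}{2\pi i}\int_{\Lambda}\mathfrak{h}(\lambda)\,\Phi_\alpha(\mathfrak{p}_{1-\lambda})\,d\lambda .
$$
By \cite[Th. 4.4]{ALMV} one has $\Phi_\alpha(\mathfrak{p}_\mu)=(\mu-T)^{-1}$, and with $\mu=1-\lambda$ and $A=I-T$ this equals $((1-\lambda)-T)^{-1}=-(\lambda-A)^{-1}$. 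Substituting turns the two minus signs into a plus and recovers exactly \eqref{hol1}, that is, $\Phi_\alpha(\mathfrak{f})=\mathfrak{h}(A)$. I expect the main obstacle to be the integrability analysis at the tip $\lambda=0$: one must produce the sharp asymptotics $|1-\lambda|-1\sim|\cos\varphi|\,|\lambda|$ and combine it with the norm formula for $\mathfrak{p}_{1-\lambda}$ to see that the $\mathcal E_0$-decay exponent $s>0$ is exactly what is needed. The geometric verifications that $1-z$ is enclosed by $\Lambda$ and that $|1-\lambda|>1$ off the tip, both hinging on $\varphi>\pi/2$, are the other points requiring care.
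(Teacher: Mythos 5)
Your proposal is correct and takes essentially the same route as the paper's proof: both represent $\mathfrak{f}$ as the absolutely convergent Bochner integral $-\frac{1}{2\pi i}\int_{\Lambda}\mathfrak{h}(\lambda)\,\mathfrak{p}_{1-\lambda}\,d\lambda$ in $A^{\alpha}(\D)$, using Example \ref{ex22}(i) to control $\lVert\mathfrak{p}_{1-\lambda}\rVert_{A^{\alpha}(\D)}$ by $|\lambda|^{\alpha}(|\lambda-1|-1)^{-(\alpha+1)}$, and then push the bounded homomorphism $\Phi_\alpha$ through the integral and invoke $\Phi_\alpha(\mathfrak{p}_{1-\lambda})=-(\lambda-A)^{-1}$ to recover \eqref{hol1}. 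The extra verifications you supply (that $1-z$ is enclosed by $\Lambda$, the excision of the tip via $|\mathfrak{h}(\lambda)|\le K|\lambda|^{s}$, the asymptotics $|1-\lambda|-1\sim|\cos\varphi|\,|\lambda|$, and the identification of the Bochner integral with $\mathfrak f$ through continuity of point evaluations) are details the paper treats as implicit, not a different argument.
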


\begin{proof}
Take $\Lambda$ as given in \eqref{hol1}. Note that for
$\lambda\in\Lambda\setminus \{0\}$ the function
$$
\mathfrak{p}_{1-\lambda}(z)
:=\frac{-1}{z-(1-\lambda)}
=\sum_{n=0}^{\infty}p_{1-\lambda}(n)z^n,
\quad z\in\D,
$$
lies in $A^{\alpha}(\D)$
with
$$
\lVert \mathfrak{p_{1-\lambda}}\rVert_{A^{\alpha}(\D)}
=\sum_{n=0}^{\infty}|W^{\alpha}p_{1-\lambda}(n)|k^{\alpha+1}(n)\leq \frac{|\lambda|^{\alpha}}{(|\lambda-1|-1)^{\alpha+1}},
$$
 see Example \ref{ex22} (i).

So, we have $\Phi_\alpha(p_{1-\lambda})=-(\lambda-A)^{-1}$, see \cite[Th. 4.4]{ALMV}.
Furthermore, the integral identity
$$
\mathfrak{f}(z)=\mathfrak{h}(1-z)
=\frac{-1}{2\pi i}
\int_{\Lambda}\mathfrak{h}(\lambda)\mathfrak{p}_{1-\lambda}(z)\,d\lambda
$$
holds in $A^{\alpha}(\D)$ since
$$
\left\| \frac{-1}{2\pi i}
\int_{\Lambda}\mathfrak{h}(\lambda)\mathfrak{p}_{1-\lambda}(z)\,d\lambda \right\|_{A^{\alpha}(\D)}\leq
\frac{K}{2\pi }
\int_{\Lambda}
|\lambda|^{s}\frac{|\lambda|^{\alpha}}
{(|\lambda-1|-1)^{\alpha+1}}\,d\lambda<\infty,
$$
where $K,s>0$ are such that $\vert\mathfrak h(\lambda)\vert\le K\vert\lambda\vert^s$,
$\lambda\in S_\omega\cap\D$.
Therefore, by \eqref{hol1} we conclude
$$
\Phi_\alpha(\mathfrak{f})
=\frac{-1}{2\pi i}
\int_{\Lambda}\mathfrak{h}(\lambda)\mathfrak{p}_{1-\lambda}(T)\,d\lambda
=\frac{1}{2\pi i}\int_{\Lambda}\mathfrak{h}(\lambda)(\lambda-A)^{-1}\,d\lambda
=\mathfrak{h}(A).
$$
\end{proof}

\begin{example}\label{ex23}
\normalfont
Let $s\in \R$ and let $\mathfrak{k}^{s}$ be given as in Example \ref{ex43}.
Assume that $T$ is $(C,\alpha)$-bounded such that
$\hbox{Ker}(I-T)=\{0\}$. 
Then $(I-T)^{-1}$ is injective and therefore functions  
$\mathfrak{e}\equiv (1-z)^n$, $n\in\N$, are candidate to be $\alpha$-regulariser. In fact, they are:
\begin{itemize}
\item[(i)]
If $n<s\leq n+1$ with $n\in\N_0$, then
$(1-z)^{n+1}\mathfrak{k}^{s}(z)
=\mathfrak{k}^{s-n-1}(z)\in A^{\alpha}(\D)$ and
$$
(I-T)^{-s}:=\mathfrak{k}^{s}(T)=(I-T)^{-n-1}\mathfrak{k}^{s-n-1}(T).
$$
\item[(ii)] Let us consider
$\log(1-z)=-\displaystyle\sum_{n=1}^{\infty}\frac{1}{n}z^n$ for $z\in\D$.
Then
$$
(\mathfrak{e}\mathfrak{f})(z):=(1-z)\log(1-z)
=-z+\sum_{n=1}^{\infty}\frac{z^{n+1}}{n(n+1)}\in A^{\alpha}(\D),
$$
so that $\log(1-z)$ is $\alpha$-regularizable by $1-z$, and then
$$
\log(I-T):=(I-T)^{-1}\left[(1-z)\log(1-z)\right](T)
$$
exists as a closed operator.
\end{itemize}
\end{example}

\medskip
Set, for $0<s<1$,
$
(I-T)^{s}:=\Phi_{\alpha}(\mathfrak{k}^{-s}).
$
The next property of the range of the above operator which will be used in Corollary \ref{Cor8.5}.

\begin{proposition}\label{prop4.8}
Let $T\in \mathcal{B}(X)$ be a $(C,\alpha)$-bounded operator with $\alpha\geq 0,$ and $0<s<1$.

Then
$$
\overline{(I-T)^{s}X}=\overline{(I-T)X}.
$$
\end{proposition}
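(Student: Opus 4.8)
The plan is to prove the two inclusions $\overline{(I-T)X}\subseteq\overline{(I-T)^sX}$ and $\overline{(I-T)^sX}\subseteq\overline{(I-T)X}$ separately, writing $A:=I-T$ throughout. For the first one I would exploit that $\Phi_\alpha$ is an algebra homomorphism on $A^{\alpha}(\D)$. Since $0<s<1$, both $\mathfrak{k}^{-s}(z)=(1-z)^{s}$ and $\mathfrak{k}^{-(1-s)}(z)=(1-z)^{1-s}$ belong to $A^{\alpha}(\D)$ (Example \ref{ex43}, as $-s<0$ and $-(1-s)<0$), so $(I-T)^{s}=\Phi_\alpha(\mathfrak{k}^{-s})$ and $(I-T)^{1-s}=\Phi_\alpha(\mathfrak{k}^{-(1-s)})$ are bounded operators. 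Because $\mathfrak{k}^{-s}\,\mathfrak{k}^{-(1-s)}=\mathfrak{k}^{-1}$ pointwise and $\Phi_\alpha(\mathfrak{k}^{-1})=I-T$, multiplicativity gives $(I-T)=(I-T)^{s}(I-T)^{1-s}$. Hence for every $x$ one has $(I-T)x=(I-T)^{s}\bigl((I-T)^{1-s}x\bigr)\in(I-T)^{s}X$, so $(I-T)X\subseteq(I-T)^{s}X$ and therefore $\overline{(I-T)X}\subseteq\overline{(I-T)^{s}X}$.

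For the reverse inclusion, which is the substantial one, I would represent $(I-T)^{s}$ by a Balakrishnan-type integral and read off that its range sits inside $\overline{(I-T)X}$. By Lemma \ref{Sector} the operator $A=I-T$ is sectorial; evaluating its bound at $\lambda=-t$ ($t>0$) yields $\|(t+A)^{-1}\|\le K_\alpha(T)/t$ and hence $\|A(t+A)^{-1}\|=\|I-t(t+A)^{-1}\|\le 1+K_\alpha(T)$, while also $\|A(t+A)^{-1}\|\le\|A\|\,K_\alpha(T)/t$. Starting from the scalar identity $\lambda^{s}=\frac{\sin\pi s}{\pi}\int_{0}^{\infty}t^{s-1}\frac{\lambda}{t+\lambda}\,dt$ (valid for $\Real\lambda\ge 0$, $0<s<1$), I would lift it through the functional calculus to
\[
(I-T)^{s}x=\frac{\sin\pi s}{\pi}\int_{0}^{\infty}t^{s-1}A(t+A)^{-1}x\,dt,\qquad x\in X,
\]
the integral converging absolutely in the Bochner sense because of the two norm estimates above ($t^{s-1}$ is integrable near $0$ since $s>0$, and $t^{s-2}$ near $\infty$ since $s<1$). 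Now every integrand $A(t+A)^{-1}x$ lies in $(I-T)X$, so the Bochner integral lies in the closed subspace $\overline{(I-T)X}$; thus $(I-T)^{s}X\subseteq\overline{(I-T)X}$ and $\overline{(I-T)^{s}X}\subseteq\overline{(I-T)X}$, which together with the first step finishes the proof.

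The step I expect to require the most care is the passage from the scalar identity to the operator identity, that is, justifying that $\Phi_\alpha(\mathfrak{k}^{-s})$ really equals the above integral. The clean route is to invoke Theorem \ref{Sectorial}: under $z\mapsto 1-z$ the function $\mathfrak{k}^{-s}$ corresponds to $\mathfrak{h}(\lambda)=\lambda^{s}$, so $(I-T)^{s}$ coincides with the sectorial fractional power $A^{s}$, for which the Balakrishnan representation is standard (see \cite{Haase}). Alternatively one can establish the integral formula directly, approximating $\int_{0}^{\infty}$ by integrals over $[\varepsilon,1/\varepsilon]$, writing these as norm-convergent Riemann sums of the bounded operators $A(t+A)^{-1}=\Phi_\alpha(\mathfrak{r}_{t})$ with $\mathfrak{r}_{t}(z)=(1-z)/(t+1-z)\in A^{\alpha}(\D)$ (a resolvent function of the type handled in Example \ref{ex22} (i) and in Theorem \ref{Sectorial}), and then using the continuity of $\Phi_\alpha$ together with the resolvent bounds to pass to the limit. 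Either route sidesteps the naive factorization $(1-z)^{s}=(1-z)(1-z)^{s-1}$, whose second factor $\mathfrak{k}^{1-s}$ fails to lie in $A^{\alpha}(\D)$; this failure is precisely the technical trap one wants to avoid.
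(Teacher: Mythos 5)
Your proof is correct, but for the substantive inclusion $\overline{(I-T)^sX}\subseteq\overline{(I-T)X}$ it follows a genuinely different route from the paper's. The paper stays entirely inside the discrete $\tau^\alpha$-calculus: starting from $(I-T)^s x=\sum_{n\ge0}W^{\alpha}k^{-s}(n)\Delta^{-\alpha}\mathcal{T}(n)x$, it combines the summation-by-parts identity $\Delta^{-(\alpha+1)}\mathcal{T}(n-1)(T-I)=\Delta^{-\alpha}\mathcal{T}(n)-k^{\alpha+1}(n)I$ (taken from the proof of Lemma \ref{dondeconverge} with $\beta=\alpha+1$) with the vanishing of $\mathfrak{k}^{-s}$ at $z=1$, i.e.\ $\sum_{n\ge1}W^{\alpha}k^{-s}(n)k^{\alpha+1}(n)=-W^{\alpha}k^{-s}(0)$, to absorb the $n=0$ term and rewrite $(I-T)^s x$ as the norm-convergent series $\sum_{n\ge1}W^{\alpha}k^{-s}(n)\Delta^{-(\alpha+1)}\mathcal{T}(n-1)(T-I)x$, every term of which lies in $(I-T)X$. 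You implement the same principle --- exhibit $(I-T)^s x$ as a limit of elements of $(I-T)X$ --- but with a continuous superposition instead of a discrete one: the Balakrishnan integral, justified either through Theorem \ref{Sectorial} together with the standard sectorial theory in \cite{Haase}, or by your direct argument, which indeed closes: since $\Vert\mathfrak{p}_{1+t}\Vert_{A^{\alpha}(\D)}\le t^{-1}$ by the estimate in the proof of Theorem \ref{Sectorial}, and $A^\alpha(\D)$ is a Banach algebra containing $\mathfrak{k}^{-1}$, the scalar integral $\frac{\sin\pi s}{\pi}\int_0^\infty t^{s-1}\,\mathfrak{k}^{-1}\mathfrak{p}_{1+t}\,dt$ converges absolutely in $A^{\alpha}(\D)$, necessarily to $\mathfrak{k}^{-s}$ by pointwise evaluation, and one then applies the bounded homomorphism $\Phi_\alpha$. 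What your route buys is brevity and conceptual clarity: the proposition becomes a general fact about bounded sectorial operators, with the resolvent bounds of Lemma \ref{Sector} doing all the analytic work. What the paper's route buys is self-containedness: no appeal to sectorial machinery beyond what the paper itself proves, only elementary series manipulations whose key identity is recycled elsewhere in the text. Your first inclusion, via $(I-T)=(I-T)^{s}(I-T)^{1-s}$ and multiplicativity of $\Phi_\alpha$, is exactly what the paper's ``Clearly'' abbreviates, and your closing remark correctly pinpoints the trap ($\mathfrak{k}^{1-s}\notin A^{\alpha}(\D)$) that both arguments are designed to circumvent.
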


\begin{proof}
Clearly, $(I-T)X\subseteq (I-T)^{s}X$ whence
$\overline{(I-T)X}\subseteq\overline{(I-T)^{s}X}$.

To the converse,
$$
\sum_{n=1}^{\infty}W^{\alpha}k^{-s}(n)k^{\alpha+1}(n)
=\sum_{n=0}^{\infty}W^{\alpha}k^{-s}(n)k^{\alpha+1}(n)-W^{\alpha}k^{-s}(0)
=-W^{\alpha}k^{-s}(0)
$$
since
$\sum_{n=0}^{\infty}W^{\alpha}k^{-s}(n)k^{\alpha+1}(n)$ equals $(z-1)^s$ at $z=1$.
Furthermore, taking $\beta=\alpha+1$ in the equality
$$
(I-T)\Delta^{-\beta}\mathcal{T}(n-1)=k^{\beta}(n)-\sum_{j=0}^{n}k^{\beta-\alpha-1}(n-j)\Delta^{-\alpha}\mathcal{T}(j)
$$
given in the proof Lemma \ref{dondeconverge}, one gets
$$
\Delta^{-(\alpha+1)}\mathcal{T}(n-1)(T-I)
=\Delta^{-\alpha}\mathcal{T}(n)-k^{\alpha+1}(n)I,\quad n\in\N.
$$

Therefore, for $x\in X$ and $y=(I-T)^{s}x$, one has
\begin{eqnarray*}
\displaystyle y&=&
\sum_{n=0}^{\infty}W^{\alpha}k^{-s}(n)\Delta^{-\alpha}\mathcal{T}(n)x
=W^{\alpha}k^{-s}(0)x+\sum_{n=1}^{\infty}W^{\alpha}k^{-s}(n)\Delta^{-\alpha}\mathcal{T}(n)x\\
&=&-\sum_{n=1}^{\infty}W^{\alpha}k^{-s}(n)(k^{\alpha+1}(n)I-\Delta^{-\alpha}\mathcal{T}(n))x
=\sum_{n=1}^{\infty}W^{\alpha}k^{-s}(n)\Delta^{-(\alpha+1)}\mathcal{T}(n-1)(T-I)x,
\end{eqnarray*}
so that $y\in\overline{(I-T)X}$. Then
$\overline{(I-T)^{s}X}\subseteq \overline{(I-T)X}$ as we wanted to show.
\end{proof}

\begin{remark}
\normalfont
For a $(C,\alpha)$ bounded operator $T$ as above it can be also shown that $(I-T)^{s}X$ is closed if and only if $(I-T)X$ is closed. 
We do not include the proof of this result since it is not needed in the paper. Such a property and Proposition \ref{prop4.8} 
are proved for power-bounded operators (case $\alpha=0$) in \cite[Prop. 2.1]{DL}.
\end{remark}

\section{Admissibility and fractional differences}\label{admisi}
\setcounter{theorem}{0}
\setcounter{equation}{0}

Here we generalize the notion of admissible function introduced in \cite{HT}, and prove key results about aproximation of the identity in 
the algebra $A^{\alpha}(\D)$. For this, we need some preliminary results.
The first one gives a representation of analytic functions by fractional differences $D^\alpha.$

\begin{lemma}\label{represen}
Let $\alpha>0$ and let $\mathfrak{f}(z)=\sum_{n=0}^{\infty}f(n)z^n$ be a holomorphic function in
$\D$ such that $D^{\alpha}f(j)\geq 0$ for all $j$ and $W^{-\alpha}(D^{\alpha}f)=f.$ Then $$
\mathfrak{f}(z)=\displaystyle\sum_{j=0}^{\infty}D^{\alpha}f(j)\Delta^{-\alpha}\mathcal{Z}(j),\quad z\in\D,
$$
and the series converges absolutely and uniformly in compact subsets of $\D$.
\end{lemma}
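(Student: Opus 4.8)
The plan is to set $g:=D^{\alpha}f$ and to read the hypothesis $W^{-\alpha}(D^{\alpha}f)=f$ as the pointwise identity $f(n)=\sum_{j=n}^{\infty}k^{\alpha}(j-n)g(j)$ for every $n\in\N_0$. With this in hand, I would expand $\Delta^{-\alpha}\mathcal{Z}(j)=\sum_{l=0}^{j}k^{\alpha}(j-l)z^{l}$ in the right-hand side and interchange the order of summation, the target being the chain
$$
\sum_{j=0}^{\infty}g(j)\Delta^{-\alpha}\mathcal{Z}(j)
=\sum_{l=0}^{\infty}z^{l}\sum_{j=l}^{\infty}k^{\alpha}(j-l)g(j)
=\sum_{l=0}^{\infty}W^{-\alpha}g(l)\,z^{l}
=\sum_{l=0}^{\infty}f(l)z^{l}=\mathfrak{f}(z).
$$
This is precisely the Fubini manipulation already used to obtain the representation \eqref{int_rep}, except that here $D^{\alpha}f$ plays the role of $W^{\alpha}f$ and $\mathfrak f$ is not assumed to lie in $A^{\alpha}(\D)$. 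Thus the whole argument reduces to justifying this rearrangement, which I would do by producing absolute (hence uniform) convergence of the double series on compact subsets of $\D$.

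For the convergence, the crucial structural fact is positivity. By hypothesis $g(j)=D^{\alpha}f(j)\ge0$, and since $k^{\alpha}(m)\ge0$ for $\alpha>0$, the coefficients $f(l)=W^{-\alpha}g(l)=\sum_{j\ge l}k^{\alpha}(j-l)g(j)$ are themselves nonnegative. I would then fix a compact set $Q\subset\D$, choose $\rho\in(0,1)$ with $|z|\le\rho$ on $Q$, and bound $|\Delta^{-\alpha}\mathcal{Z}(j)|\le\sum_{l=0}^{j}k^{\alpha}(j-l)\rho^{l}$. Because all terms are nonnegative, Tonelli's theorem for series applies and gives
$$
\sum_{j=0}^{\infty}g(j)\sum_{l=0}^{j}k^{\alpha}(j-l)\rho^{l}
=\sum_{l=0}^{\infty}\rho^{l}\sum_{j=l}^{\infty}k^{\alpha}(j-l)g(j)
=\sum_{l=0}^{\infty}f(l)\rho^{l}.
$$
The last series is finite: $\mathfrak{f}$ is holomorphic in $\D$ and $\rho<1$, so its Taylor series converges absolutely at $\rho$, and since $f(l)\ge0$ one has $\sum_{l}f(l)\rho^{l}=\sum_{l}|f(l)|\rho^{l}<\infty$. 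This bounds the double series uniformly on $Q$ and yields the asserted absolute and uniform convergence on compact subsets.

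Having secured absolute convergence on each disc $\{|z|\le\rho\}$, I would finally invoke Fubini for series to legitimately perform the interchange displayed above, obtaining $\sum_{j}g(j)\Delta^{-\alpha}\mathcal{Z}(j)=\sum_{l}f(l)z^{l}=\mathfrak{f}(z)$ for all such $z$; letting $\rho\uparrow1$ covers all of $\D$. I expect the only delicate point to be the convergence bookkeeping rather than any deeper idea: everything rests on the sign condition $D^{\alpha}f\ge0$, which forces $f\ge0$ and lets Tonelli replace the double sum by the (convergent) Taylor series of $\mathfrak{f}$ evaluated at $\rho$. Without the positivity hypothesis the interchange would require a separate absolute-summability estimate, so I would be careful to invoke it explicitly precisely at the Tonelli step.
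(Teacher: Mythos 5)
Your proposal is correct and follows essentially the same route as the paper: the paper also applies Fubini--Tonelli at a real point $z=\rho\in[0,1)$, where positivity of $D^{\alpha}f$, $k^{\alpha}$ and $\rho^{l}$ makes the interchange legitimate and identifies the sum with $\mathfrak{f}(\rho)<\infty$, and then deduces absolute and uniform convergence on compact subsets of $\D$ by dominating $|\Delta^{-\alpha}\mathcal{Z}(j)|$ at $|z|\le\rho$ by its value at $\rho$. Your write-up merely makes explicit the domination bookkeeping and the final Fubini step for complex $z$ that the paper leaves implicit.
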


\begin{proof}
Let $z\in [0,1).$ By Fubini's Theorem one gets
$$
\sum_{j=0}^{\infty}D^{\alpha}f(j)\Delta^{-\alpha}\mathcal{Z}(j)
=\sum_{l=0}^{\infty}z^l\sum_{j=l}^{\infty}k^{\alpha}(j-l)D^{\alpha}f(j)=f(z).
$$
Furthermore, it is clear that the series converges uniformly and absolutely on compact subsets of
$\D$ since the representation holds in particular for $z\in[0,1)$ and $D^{\alpha}f\ge0$.
\end{proof}

\begin{proposition}\label{inversein}
Let $\mathfrak{f}$ be a free-zero holomorphic function on $\D$ extendible at $z=1$ with $\mathfrak{f}(1)\neq 0$
$(\mathfrak{f}(1)=\infty$ admitted\rm{)}. Let $\displaystyle\frac{1}{\mathfrak{f}}$ be given by
$\displaystyle\frac{1}{\mathfrak{f}(z)}=\sum_{n=0}^{\infty}g(n)z^n$, $z\in \D$,
and assume that there exists $W^{\alpha}g,$ with $g(0),W^{\alpha}g(0)\geq 0$ and $g(j),W^{\alpha}g(j)\leq 0,$ 
for $j\geq 1.$
Then $\displaystyle\frac{1}{\mathfrak{f}}\in A^{\alpha}(\mathbb{D})$, and
$$
\lVert\frac{1}{\mathfrak{f}}\rVert_{A^{\alpha}(\mathbb{D})}=2 W^{\alpha}g(0)-\frac{1}{\mathfrak{f}(1)},
$$
where $1/\infty:=0.$ Moreover, $\mathfrak{f}$ does not have zeros in $\overline{\mathbb{D}}$.
\end{proposition}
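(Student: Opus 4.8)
The plan is to read the statement as a Kaluza/renewal-type reciprocal result and to reduce everything to a single monotone-convergence computation on the segment $[0,1)$, letting $z\uparrow 1$. The norm we must control is $\|1/\mathfrak f\|_{A^{\alpha}(\D)}=\sum_{n=0}^{\infty}|W^{\alpha}g(n)|k^{\alpha+1}(n)$, and the sign hypotheses ($W^{\alpha}g(0)\ge0$, $W^{\alpha}g(j)\le0$ for $j\ge1$) turn absolute values into signs: $|W^{\alpha}g(n)|k^{\alpha+1}(n)=-W^{\alpha}g(n)k^{\alpha+1}(n)$ for $n\ge1$ and $=W^{\alpha}g(0)$ for $n=0$. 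Thus the whole proposition will follow once I know $\sum_{n\ge1}(-W^{\alpha}g(n))k^{\alpha+1}(n)=W^{\alpha}g(0)-1/\mathfrak f(1)<\infty$.

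To produce this identity I would first introduce the auxiliary holomorphic function $\mathfrak q(z):=W^{\alpha}g(0)-1/\mathfrak f(z)$, whose Taylor coefficients are $q:=W^{\alpha}g(0)\delta_0-g$. Since $W^{\alpha}\delta_0=\delta_0$ and $D^\alpha$ is linear, one has $D^{\alpha}q=W^{\alpha}g(0)\delta_0-W^{\alpha}g$, so $D^{\alpha}q(0)=0$ and $D^{\alpha}q(n)=-W^{\alpha}g(n)\ge0$ for $n\ge1$; hence $D^{\alpha}q(j)\ge0$ for all $j$, exactly the positivity required by Lemma \ref{represen}. Provided the reconstruction $W^{-\alpha}(D^{\alpha}q)=q$ holds (equivalently $W^{-\alpha}(W^{\alpha}g)=g$, which is the remaining hypothesis of Lemma \ref{represen}), that lemma yields, with absolute and locally uniform convergence on $\D$,
\[ W^{\alpha}g(0)-\frac{1}{\mathfrak f(z)}=\sum_{n=1}^{\infty}\bigl(-W^{\alpha}g(n)\bigr)\Delta^{-\alpha}\mathcal{Z}(n),\qquad z\in\D. \]
Restricting to $z\in[0,1)$, every summand is nonnegative, and for each fixed $n$ the polynomial $\Delta^{-\alpha}\mathcal{Z}(n)=\sum_{l=0}^{n}k^{\alpha}(n-l)z^{l}$ increases to $k^{\alpha+1}(n)$ as $z\uparrow1$. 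By the monotone convergence theorem, together with the extendibility of $\mathfrak f$ at $1$ (so $1/\mathfrak f(z)\to1/\mathfrak f(1)$, with $1/\infty:=0$), I obtain $\sum_{n=1}^{\infty}(-W^{\alpha}g(n))k^{\alpha+1}(n)=W^{\alpha}g(0)-1/\mathfrak f(1)<\infty$. This simultaneously gives $1/\mathfrak f\in A^{\alpha}(\D)$ and $\|1/\mathfrak f\|_{A^{\alpha}(\D)}=W^{\alpha}g(0)+\bigl(W^{\alpha}g(0)-1/\mathfrak f(1)\bigr)=2W^{\alpha}g(0)-1/\mathfrak f(1)$.

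The final ``moreover'' is then immediate: by the inclusion \eqref{inclusion} one has $A^{\alpha}(\D)\hookrightarrow\ell^1$, so the Taylor series of $1/\mathfrak f$ converges absolutely on $\overline{\D}$ and $1/\mathfrak f$ extends continuously and boundedly to $\overline{\D}$. A zero of $\mathfrak f$ at some $\zeta\in\overline{\D}$ would force $1/\mathfrak f$ to blow up at $\zeta$, contradicting this boundedness; hence $\mathfrak f$ has no zeros in $\overline{\D}$.

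I expect the main obstacle to be the reconstruction identity $W^{-\alpha}(W^{\alpha}g)=g$ used to invoke Lemma \ref{represen}: its validity hinges on the finiteness of $\sum_j k^{\alpha}(j)|W^{\alpha}g(j)|$, which is essentially equivalent to the membership we are trying to prove, so a naive a~priori summability bound would be circular. The way out is precisely the nonnegativity $D^{\alpha}q\ge0$, which lets one run an Abelian/monotone argument (or appeal to the $W^{\alpha}$--$W^{-\alpha}$ duality of the cited fractional-difference calculus) instead of a size estimate. The other delicate point, interchanging $\lim_{z\uparrow1}$ with the infinite sum, is not a real difficulty here because it is licensed by monotone convergence; indeed the mixed sign pattern (one sign at $n=0$, the opposite for $n\ge1$) is exactly what makes the telescoping value $2W^{\alpha}g(0)-1/\mathfrak f(1)$ emerge.
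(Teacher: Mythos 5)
Your argument is correct at the same level of rigor as the paper's and follows essentially the same route: the paper, too, proves the proposition by observing that $-W^{\alpha}g(j)\ge 0$ for $j\ge 1$, that $\Delta^{-\alpha}\mathcal{Z}(j)$ increases to $k^{\alpha+1}(j)$ as $z\nearrow 1$ along $[0,1)$, and by passing to the limit to obtain $\sum_{j\ge1}\vert W^{\alpha}g(j)\vert k^{\alpha+1}(j)=W^{\alpha}g(0)-1/\mathfrak{f}(1)$, whence the membership, the norm identity $2W^{\alpha}g(0)-1/\mathfrak{f}(1)$, and, by continuity of $1/\mathfrak{f}$ up to $\vert z\vert=1$ (its Taylor coefficients being summable), the absence of zeros of $\mathfrak{f}$ on $\overline{\D}$. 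Your detour through the auxiliary function $\mathfrak{q}=W^{\alpha}g(0)-1/\mathfrak{f}$ and Lemma \ref{represen} is only a repackaging: Lemma \ref{represen} is itself proved by the same positive-terms Fubini interchange that the paper performs directly on the series $\sum_{j\ge1}W^{\alpha}g(j)\Delta^{-\alpha}\mathcal{Z}(j)$.

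Concerning the point you flag as the main obstacle: you are right that the reconstruction identity $W^{-\alpha}(W^{\alpha}g)=g$ is indispensable, but this does not make your proof weaker than the paper's, because the paper uses it tacitly in exactly the same place --- it is what allows one to identify $\sum_{j\ge1}W^{\alpha}g(j)\Delta^{-\alpha}\mathcal{Z}(j)$ with $1/\mathfrak{f}(z)-W^{\alpha}g(0)$ on $[0,1)$, and it is moreover needed for the conclusion itself, since $\tau^{\alpha}$ is by definition the image $W^{-\alpha}\bigl(\ell^1(k^{\alpha+1})\bigr)$. Note that for $0<\alpha\le 1$ the identity can actually be closed under the stated hypotheses: the sign conditions and extendibility at $z=1$ give $g\in\ell^1$ (monotone convergence applied to $\sum_{j\ge1}\vert g(j)\vert z^j=g(0)-1/\mathfrak{f}(z)$), and since $k^{\alpha}\le 1$ in this range, so that $\ell^1\subset\ell^1(k^{\alpha})$, the estimate $(\vert k^{-\alpha}\vert\ast k^{\alpha})(q)=O(k^{\alpha}(q))$ of Section \ref{Pre} justifies the Fubini interchange exactly as in the proof of \eqref{deuvedoble}. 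For $\alpha>1$ the identity must be read as implicit in the hypothesis that $W^{\alpha}g$ exists --- in the paper's proof no less than in yours.
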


\begin{proof}
Since $-W^{\alpha}g$ is nonnegative it follows that there exists the finite sum
$$
\displaystyle\sum_{j=1}^{\infty}W^{\alpha}g(j)k^{\alpha+1}(j)
=\lim_{0<z\nearrow 1}\sum_{j=1}^{\infty}W^{\alpha}g(j)\Delta^{-\alpha}\mathcal{Z}(j)\\
\displaystyle=\lim_{0<z\nearrow 1}\sum_{j=1}^{\infty}g(j)z^j
=\lim_{0<z\nearrow 1}\frac{1}{\mathfrak{f}(z)}-W^{\alpha}g(0).
$$

Therefore $\frac{1}{\mathfrak{f}}\in A^{\alpha}(\mathbb{D})$, with
$\lVert\frac{1}{\mathfrak{f}}\rVert_{A^{\alpha}(\mathbb{D})}
=W^{\alpha}g(0)- \sum_{j=1}^{\infty}W^{\alpha}g(j)k^{\alpha+1}(j)
=2 W^{\alpha}g(0)-\frac{1}{\mathfrak{f}(1)}$.
In particular $\displaystyle\frac{1}{\mathfrak{f}}$ can be extended to $|z|=1$ and we conclude that
$\mathfrak{f}$ does not have zeros on
$\overline\D$.
\end{proof}

The formula given in Proposition \ref{Porfin} below is established in \cite[Lemma 2.7]{ALMV} for $f$ and $h$ in the
Banach algebra $\tau^\alpha$. Here we will need the formula for  $h\in\tau^\alpha$ but with $f$ not necessarily in 
$\tau^\alpha$, something which is available under certain compensatories  assumptions on $f$ and $h$. 
Its proof is rather involved and needs the following lemma.

\begin{lemma}\label{tech} Let $\alpha>0,$ $q\in\N$ and $h\in\N_0.$ Then
$$
k^{\alpha}(q+h)=-\sum_{p=0}^{q-1}k^{\alpha}(p)
\sum_{j=q-p}^{h+q-p}k^{-\alpha}(j)k^{\alpha}(h+q-p-j).
$$
\end{lemma}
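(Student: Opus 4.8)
The plan is to exploit the group property $k^\alpha\ast k^\beta=k^{\alpha+\beta}$ recorded just after \eqref{generating}; its only consequence I need is $k^\alpha\ast k^{-\alpha}=k^0=\delta_0$, i.e. $\sum_{i=0}^m k^{-\alpha}(i)k^\alpha(m-i)=\delta_0(m)$ for every $m\in\N_0$. All sums in the statement have finite range, so there is no convergence issue to worry about, and the whole matter reduces to index bookkeeping together with two applications of this convolution identity. First I would fix $p$ with $0\le p\le q-1$, write $r:=q-p\ge 1$, and recognize the inner sum $\sum_{j=r}^{h+r}k^{-\alpha}(j)k^\alpha(h+r-j)$ as the tail of the full convolution $(k^{-\alpha}\ast k^\alpha)(h+r)=\delta_0(h+r)$. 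Since $h+r=h+q-p\ge 1$ this full convolution vanishes, so the inner sum equals $-\sum_{j=0}^{r-1}k^{-\alpha}(j)k^\alpha(h+r-j)$.

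Substituting this into the asserted right-hand side cancels the leading minus sign and converts it into the double sum $\sum_{p=0}^{q-1}k^\alpha(p)\sum_{j=0}^{q-p-1}k^{-\alpha}(j)k^\alpha(q+h-p-j)$. Writing $n:=q+h$, I would read this as a sum over pairs $(p,j)$ with $p,j\ge 0$ and $p+j\le q-1$, and then reorganize it by the value $s:=p+j$, which runs over $0\le s\le q-1$. For each fixed $s$ the factor $k^\alpha(n-s)$ pulls out, and the inner sum over the pairs with $p+j=s$ is exactly $(k^\alpha\ast k^{-\alpha})(s)=\delta_0(s)$.

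Because $\delta_0(s)$ is $1$ for $s=0$ and $0$ otherwise, and $s=0$ does lie in the range $0\le s\le q-1$ (here $q\ge 1$ is used), the entire double sum collapses to the single surviving term $k^\alpha(n-0)=k^\alpha(q+h)$, which is precisely the claimed identity.

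The argument is elementary once the two uses of $k^\alpha\ast k^{-\alpha}=\delta_0$ are spotted; the one place where care is genuinely needed, and where the main obstacle lies, is the index accounting. Concretely I would double-check that the original inner range $\{q-p\le j\le h+q-p\}$ is exactly the complement of $\{0\le j\le q-p-1\}$ inside the convolution of length $h+q-p$, and that after the flip the admissible pairs are exactly $\{p+j\le q-1\}$ rather than a shifted region. Verifying the endpoints $j=q-p$ and $j=h+q-p$ against $r$ and $h+r$ before collapsing the sum is the step most prone to an off-by-one slip, so I would confirm those before committing to the final reorganization.
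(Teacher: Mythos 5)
Your proof is correct, and it takes a genuinely different route from the paper's. The paper proves the lemma by induction on $h$: the base case $h=0$ is exactly the observation that $\sum_{p=0}^{q-1}k^{\alpha}(p)k^{-\alpha}(q-p)=-k^{\alpha}(q)$, and the induction step is a fairly long reshuffling of sums that absorbs the new terms into the inductive hypothesis. You instead prove the identity directly, using the group property $k^{-\alpha}\ast k^{\alpha}=k^{0}=\delta_{0}$ (recorded after \eqref{generating}) twice: once to replace each inner tail $\sum_{j=q-p}^{h+q-p}k^{-\alpha}(j)k^{\alpha}(h+q-p-j)$ by minus the complementary head $\sum_{j=0}^{q-p-1}k^{-\alpha}(j)k^{\alpha}(h+q-p-j)$ (legitimate because the full convolution is evaluated at $h+q-p\ge1$, where it vanishes), and a second time, after regrouping the resulting triangular double sum along $s=p+j\le q-1$, to collapse everything to the single surviving term $s=0$, namely $k^{\alpha}(q+h)$. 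Since every sum involved is finite, the rearrangement needs no justification beyond the index bookkeeping you carried out, and that bookkeeping is exactly right: the set $\{q-p\le j\le h+q-p\}$ is the complement of $\{0\le j\le q-p-1\}$ inside the convolution of length $h+q-p$, and the flipped region is precisely $\{p,j\ge0,\ p+j\le q-1\}$, which contains $s=0$ because $q\ge1$. What your argument buys is brevity and transparency: it exhibits the lemma as a finite rearrangement of the convolution identity in which $h$ plays no structural role, whereas the paper's induction treats $h$ as the recursion variable and must manage the boundary terms created at each step. Both proofs ultimately rest on the same single fact, $k^{\alpha}\ast k^{-\alpha}=\delta_{0}$, but yours uses it more efficiently.
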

\begin{proof}
We apply the induction method. First note that for $h=0$ and $q\in\N$,
$$
\displaystyle\sum_{p=0}^{q-1}k^{\alpha}(p)\sum_{j=q-p}^{q-p}k^{-\alpha}(j)k^{\alpha}(q-p-j)=\sum_{p=0}^{q-1}k^{\alpha}(p)k^{-\alpha}(q-p)
\displaystyle=-k^{\alpha}(q)
$$
Now, suppose that the identity of the statement is true for $h\in\N_{0}.$ Then, for $h+1,$ \begin{displaymath}\begin{array}{l}
\displaystyle\sum_{p=0}^{q-1}k^{\alpha}(p)\sum_{j=q-p}^{h+1+q-p}k^{-\alpha}(j)k^{\alpha}(h+1+q-p-j)=\sum_{p=0}^{q-1}k^{\alpha}(p)\biggl(k^{\alpha}(h+1)k^{-\alpha}(q-p)\\ \\
\displaystyle+\sum_{j=q-p+1}^{h+1+q-p}k^{-\alpha}(j)k^{\alpha}(h+1+q-p-j)\biggr)\\ \\
\displaystyle=-k^{-\alpha}(h+1)k^{\alpha}(q)+\sum_{p=0}^{q-1}k^{\alpha}(p)\sum_{j=q-p+1}^{h+1+q-p}k^{-\alpha}(j)k^{\alpha}(h+1+q-p-j)\\ \\
\displaystyle=k^{\alpha}(q)\sum_{j=1}^{h+1}k^{-\alpha}(j)k^{\alpha}(h+1-j)+\sum_{p=0}^{q-1}k^{\alpha}(p)\sum_{j=q-p+1}^{h+1+q-p}k^{-\alpha}(j)k^{\alpha}(h+1+q-p-j)\\ \\
\displaystyle=\sum_{p=0}^{q}k^{\alpha}(p)\sum_{j=q-p+1}^{h+1+q-p}k^{-\alpha}(j)k^{\alpha}(h+1+q-p-j)\\ \\
\displaystyle=-k^{\alpha}(h+(q+1))=-k^{\alpha}((h+1)+q),\quad q\in\N.
\end{array}\end{displaymath}
Thus we have completed the induction process and the proof is over.
\end{proof}

\begin{proposition}\label{Porfin} Let $\alpha>0$ and let $f,h$ be sequences such that
\begin{itemize}
\item[(i)] $f$ is a bounded sequence, $f(j)\ge0$, $D^\alpha f(j)\ge0$ for $j\in\N_0$ and
$W^{-\alpha}(D^{\alpha}f)=f$;
\item[(ii)] $h\in\tau^\alpha$ with $D^\beta h(0)\ge0$ and $D^\beta h(j)\le0$ ($j\ge1$),
for $\beta\in\{0,\alpha\}$;
\item[(iii)] $f*h \in\tau^\alpha$.
\end{itemize}

Then
$$
W^\alpha(f\ast h)(v):=\left(\sum_{j=0}^v\sum_{l=v-j}^v-\sum_{j=v+1}^\infty\sum_{l=v+1}^\infty\right)
k^\alpha(l+j-v)D^\alpha f(j)W^\alpha h(l),\ \hbox{ for } v\in\N_0.
$$
\end{proposition}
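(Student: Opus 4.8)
The plan is to compute $W^\alpha(f*h)$ directly from its definition $W^\alpha = W^m W^{-(m-\alpha)}$, and to use the hypotheses to justify the various Fubini rearrangements and to express everything in terms of $D^\alpha f$ and $W^\alpha h$ (recall $W^\alpha h = D^\alpha h$ since $h\in\tau^\alpha$). The starting point is the representations available from the hypotheses: by (i) and Lemma \ref{represen} we have $f(j)=\sum_{l\ge j}k^\alpha(l-j)D^\alpha f(l)$ through $f=W^{-\alpha}(D^\alpha f)$, while by (ii) and $h\in\tau^\alpha$ we have $h=W^{-\alpha}(W^\alpha h)$ with $W^\alpha h=D^\alpha h$ having the single sign change ($\ge 0$ at $0$, $\le 0$ afterwards). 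The idea is to write the convolution coefficient $(f*h)(n)=\sum_{i=0}^n f(i)h(n-i)$, substitute both Weyl-sum representations, and then apply $W^\alpha$ to the resulting triple series.

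First I would expand $(f*h)(n)$ by inserting $f(i)=\sum_{j\ge i}k^\alpha(j-i)D^\alpha f(j)$ and $h(n-i)=\sum_{l\ge n-i}k^\alpha(l-(n-i))W^\alpha h(l)$, obtaining a sum over $i,j,l$ weighted by $D^\alpha f(j)W^\alpha h(l)$ and a product of two Ces\`aro factors. Collapsing the inner $i$-sum using the group property $k^\alpha * k^\alpha = k^{2\alpha}$ (after reindexing) should turn this into a cleaner double series in $j,l$. Then I would apply $W^\alpha$ in the variable $v$; since $W^\alpha$ acts on the $n$-variable through the combination $W^m W^{-(m-\alpha)}$, and since the $n$-dependence sits inside a single Ces\`aro factor, the action of $W^\alpha$ on that factor is computable via the eigenfunction-type formulas of Example \ref{ex22}(ii) together with the convolution identities $k^{-\alpha}*k^\alpha = k^0 = \delta_0$. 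The emergence of the signed combination $k^{-\alpha}$ inside the $j$-sum, via the sign pattern \eqref{sign}, is what produces the inner factor $\sum_l k^{-\alpha}(\cdot)k^\alpha(\cdot)$ in the claimed formula.

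The main subtlety, and the reason Lemma \ref{tech} is proved just beforehand, is the appearance of the boundary/tail terms: applying $W^\alpha$ and reindexing the double series splits the range of summation into a "diagonal" block where $j\le v$ (giving the double sum $\sum_{j=0}^v\sum_{l=v-j}^v$) and a complementary "tail" block where $j\ge v+1$ (giving $-\sum_{j=v+1}^\infty\sum_{l=v+1}^\infty$). The combinatorial identity in Lemma \ref{tech}, $k^\alpha(q+h)=-\sum_{p=0}^{q-1}k^\alpha(p)\sum_{j}k^{-\alpha}(j)k^\alpha(\cdots)$, is precisely the tool that lets me rewrite a Ces\`aro number $k^\alpha$ that arises at the boundary as a finite sum matching the $j\le v$ block, thereby reconciling the two regions and producing the single unified expression with the characteristic sign difference between the two double sums.

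The hard part will be controlling convergence and justifying every interchange of summation order, because $f$ is only assumed bounded (not in $\tau^\alpha$), so the intermediate series are not absolutely summable for free. I would lean on hypotheses (i)--(iii) in tandem: the nonnegativity of $D^\alpha f$ and the boundedness of $f$ give monotone-convergence control on the $f$-side, the sign condition on $W^\alpha h$ together with $h\in\tau^\alpha$ gives absolute summability of $\sum_l |W^\alpha h(l)|k^{\alpha+1}(l)$, and crucially hypothesis (iii), $f*h\in\tau^\alpha$, guarantees that $W^\alpha(f*h)$ exists and equals $D^\alpha(f*h)$ so that the formal computation actually converges to the stated value. I expect the cleanest route is to verify the identity first for $0<z<1$ at the level of the generating functions (where everything converges absolutely by the decreasing/increasing monotonicity estimates \eqref{deltaestimate}--\eqref{deltaestimate2}) and only then pass to the coefficient identity, using Lemma \ref{tech} to handle the partial-sum boundary terms in the limit.
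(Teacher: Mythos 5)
Your plan founders at its central computational step. Substitute both Weyl--sum representations $f(i)=\sum_{j\ge i}k^\alpha(j-i)D^\alpha f(j)$ and $h(n-i)=\sum_{l\ge n-i}k^\alpha(l-n+i)W^\alpha h(l)$ into $(f*h)(n)=\sum_{i=0}^n f(i)h(n-i)$ and interchange sums: for fixed $(j,l)$ the inner sum over $i$ runs over $\max(0,n-l)\le i\le \min(n,j)$, and after the change of index $p=j-i$ it becomes $\sum_{p=\max(0,\,j-n)}^{\min(j,\,l+j-n)}k^\alpha(p)\,k^\alpha(l+j-n-p)$. This equals the full convolution $(k^\alpha\ast k^\alpha)(l+j-n)=k^{2\alpha}(l+j-n)$ only when both $j\le n$ and $l\le n$; in the complementary region --- precisely the region that produces the tail block $\sum_{j=v+1}^\infty\sum_{l=v+1}^\infty$ of the statement --- you get truncated convolution sums, not Ces\`aro numbers. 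So the ``cleaner double series in $j,l$'' does not exist, the $n$-dependence does not sit inside a single Ces\`aro factor (it persists in all three summation limits), and the next step of your plan, applying $W^\alpha$ in $n$ via the formulas of Example \ref{ex22}(ii), has nothing to act on. A further sign of confusion: the claimed formula contains no inner factor $\sum k^{-\alpha}(\cdot)k^\alpha(\cdot)$; that combination appears only inside Lemma \ref{tech}, which serves to evaluate a tail term, not as a visible constituent of the final identity.

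The paper's proof avoids this trap by never inserting both representations into the convolution at once. It starts from $W^\alpha(f*h)(v)=D^\alpha(f*h)(v)=\sum_{i\ge0}k^{-\alpha}(i)(f*h)(i+v)$, which is absolutely convergent simply because $f$ is bounded and $h,k^{-\alpha}\in\ell^1$; it splits the $l$-sum at $l=v$, giving the diagonal term $(h*D^\alpha f)(v)$ plus the tail $\sum_{l>v}h(l)\sum_{i\ge l-v}k^{-\alpha}(i)f(i+v-l)$; it substitutes the representations of $f$ and $h$ only inside the tail, where the sign hypotheses ($D^\alpha f\ge0$, $h\le0$ and $W^\alpha h\le0$ away from the origin) yield absolute convergence of the resulting quadruple sum; it evaluates that tail by Lemma \ref{tech} as $-\sum_{j\ge0}\sum_{l\ge v+1}k^\alpha(l+j-v)D^\alpha f(j)W^\alpha h(l)$; and it expands $h=W^{-\alpha}(W^\alpha h)$ in the diagonal term, after which the two pieces recombine into the stated expression. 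Your fallback --- verifying the identity for $0<z<1$ at the level of generating functions and then passing to coefficients --- does not repair the gap: for each fixed $v$ the right-hand side is a conditionally convergent combination of two double sums (only $W^\alpha h$ weighted by $k^{\alpha+1}$, not by anything larger, is summable, and $f$ is merely bounded), so computing its generating function in closed form requires exactly the rearrangements and tail estimates that are in question; nothing is gained over working at the coefficient level directly, which is what the paper does.
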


\begin{proof}
Note that
$|W^\alpha(f\ast h)(v)|=|D^\alpha(f\ast h)(v)|\leq \sum_{j=0}^{\infty}|k^{-\alpha}(j)|\sum_{l=0}^{j+v}f(j+v-l)|h(l)|<\infty$
since $f$ is bounded and $k^{-\alpha}$ and $h$ are in $\ell^1$.
Thus we can exchange summation order in $W^\alpha(f\ast h)(v)$ to find
\begin{eqnarray*}
W^\alpha(f\ast h)(v)&=&\sum_{l=0}^v h(l)\sum_{j=0}^{\infty} k^{-\alpha}(j)f(j+v-l)
+\sum_{l=v+1}^{\infty} h(l)\sum_{j=l-v}^{\infty} k^{-\alpha}(j)f(j+v-l)\\
&=&(h*D^{\alpha}f)(v)
+\sum_{l=v+1}^{\infty} W^{-\alpha}(W^{\alpha}h)(l)\sum_{j=l-v}^{\infty}k^{-\alpha}(j)W^{-\alpha}(D^{\alpha}f)(j+v-l).
\end{eqnarray*}
Furthermore, \begin{displaymath}\begin{array}{l}
\displaystyle|\sum_{l=v+1}^{\infty} W^{-\alpha}(W^{\alpha}h)(l)\sum_{j=l-v}^{\infty}k^{-\alpha}(j)W^{-\alpha}(D^{\alpha}f)(j+v-l)| \\ \\
\displaystyle\leq -\sum_{l=v+1}^{\infty}\sum_{p=l}^{\infty}k^{\alpha}(p-l)W^{\alpha}h(p)\sum_{j=l-v}^{\infty}|k^{-\alpha}(j)|\sum_{q=j+v-l}^{\infty}k^{\alpha}(q-j-v+l)D^{\alpha}f(q)\\ \\
\displaystyle=-\sum_{l=v+1}^{\infty}h(l)\sum_{j=l-v}^{\infty}|k^{-\alpha}(j)|f(j+v-l)<\infty.
\end{array}\end{displaymath}

Therefore, rearranging the above series and using Lemma \ref{tech} one obtains
 \begin{displaymath}\begin{array}{l}
\displaystyle\sum_{l=v+1}^{\infty} h(l)\sum_{j=l-v}^{\infty} k^{-\alpha}(j)f(j+v-l) \\ \\
\displaystyle=\sum_{q=0}^{\infty}\sum_{p=v+1}^{\infty}D^{\alpha}f(q)W^{\alpha}h(p)\sum_{l=v+1}^{p}k^{\alpha}(p-l)\sum_{j=l-v}^{q+l-v}k^{-\alpha}(j)k^{\alpha}(q-j-v+l)\\ \\
\displaystyle=\sum_{q=0}^{\infty}\sum_{p=v+1}^{\infty}D^{\alpha}f(q)W^{\alpha}h(p)\sum_{m=0}^{p-v-1}k^{\alpha}(m)\sum_{j=p-v-m}^{q+p-v-m}k^{-\alpha}(j)k^{\alpha}(q+p-v-j-m)\\ \\
\displaystyle=-\sum_{q=0}^{\infty}\sum_{p=v+1}^{\infty}D^{\alpha}f(q)W^{\alpha}h(p)k^{\alpha}(p-v+q).
\end{array}\end{displaymath}

On the other hand,
$$
(h*D^{\alpha}f)(v)=\sum_{j=0}^v D^\alpha f(j)h(v-j)=\sum_{j=0}^v D^\alpha f(j)\sum_{l=v-j}^\infty k^\alpha(l+j-v)W^\alpha h(l).
$$

Altogether, the result follows.
\end{proof}

Assumptions in the above results suggest the following definition.

\begin{definition}\label{a-admisi}
\normalfont
Let $\alpha\geq 0$ and let
$\mathfrak{f}(z)=\sum_{n=0}^{\infty}f(n)z^n$ be a holomorphic function on $\D$.
Then $\mathfrak{f}$ is said to be $\alpha$-{\it admissible} if:
\begin{itemize}
\item{(i)} The sequence $f$ is bounded,
$D^\beta f(n)\ge0$ and $W^{-\beta}(D^{\beta}f)=f$ for $\beta\in\{0,\alpha\}, n\in\N_0.$
\item{(ii)} The function $\mathfrak{f}$ does not have zeros in $\D$ and, if
$\displaystyle\frac{1}{\mathfrak{f}(z)}=\sum_{n=0}^{\infty}g(n)z^n$, $z\in \D$,
then the differences $W^{\beta}g$ exist and satisfy $W^{\beta}g(n)\leq 0$ for
 $n\geq 1$, and $W^{\beta}g(0)\geq 0,$ for $\beta\in\{0,\alpha\}.$
\end{itemize}
\end{definition}

\begin{remark}\normalfont
If $\alpha=m\in\N,$ then condition $W^{-m}(D^{m}f)=f$ in the $m$-admissibility is redundant.
\end{remark}

Next, we discuss $\alpha$-admissibility in relation with the algebra
$A^\alpha(\D)$.
Let $\mathfrak f(z)=\sum_{j=0}^\infty f(j)z^j$ be an $\alpha$-admissible function on
$\D$ with inverse
$\displaystyle\frac{1}{\mathfrak{f}(z)}=\sum_{j=0}^\infty g(j)z^j$. The fact that
$\displaystyle\mathfrak f \frac{1}{\mathfrak{f}}=1$ means that $f\ast g=k^0(=\delta_0)$.
For every $n\in\N$,
let $\mathfrak{g}_{n}$ be the function
$$
\mathfrak{g}_{n}(z)
=\frac{1}{\mathfrak{f}(z)}
\sum_{j=0}^{n-1}D^{\alpha}f(j)\Delta^{-\alpha}\mathcal{Z}(j)=\sum_{j=0}^{\infty}g_{n}(j)z^j,
$$
and put $\mathfrak{f}_n:=\mathfrak{g}_{n}\mathfrak{f}$. Note that $\mathfrak{g}_{n}\in A^{\alpha}(\mathbb{D})$
since $\displaystyle\frac{1}{\mathfrak{f}},\mathfrak{f}_n\in A^{\alpha}(\mathbb{D}$). Note also that
$\mathfrak{g}_{n}(1)=0$ if $\mathfrak{f}(1)=\infty$, and that $\mathfrak{g}_{n}(1)\le1$ if
$\mathfrak{f}(1)<\infty,$ by Lemma \ref{represen}. We need to find a suitable estimate of the norm of 
$\mathfrak{g}_{n}$ in $A^{\alpha}(\mathbb{D})$, and this requires a careful analysis of the sign of certain coefficients, which can be done thanks to the key formula established in Proposition \ref{Porfin}.

\begin{theorem}\label{lemma6.1}
Let $\mathfrak f$ be an $\alpha$-admissible function on $\D$ and $\mathfrak{g}_n$ as above.
Then
$$
\lVert \mathfrak{g}_{n}\rVert_{A^{\alpha}(\mathbb{D})}\leq 2-\mathfrak{g}_{n}(1)\quad
\hbox{ for all } n\geq 1.
$$
\end{theorem}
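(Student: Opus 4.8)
The plan is to compute the $A^\alpha(\D)$-norm of $\mathfrak{g}_n$ through its coefficients $W^\alpha g_n(v)$ and to pin down their signs. From the representation \eqref{int_rep} one has simultaneously $\|\mathfrak{g}_n\|_{A^\alpha(\D)}=\sum_{v=0}^\infty|W^\alpha g_n(v)|k^{\alpha+1}(v)$ and $\mathfrak{g}_n(1)=\sum_{v=0}^\infty W^\alpha g_n(v)k^{\alpha+1}(v)$. Hence, once we know that $0\le W^\alpha g_n(0)\le1$ and $W^\alpha g_n(v)\le0$ for every $v\ge1$, the quantity $\|\mathfrak{g}_n\|_{A^\alpha(\D)}-\mathfrak{g}_n(1)$ collapses to $2W^\alpha g_n(0)\le2$ (recall $k^{\alpha+1}(0)=1$), which is exactly the asserted estimate. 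So everything reduces to establishing this sign pattern for the coefficients of $\mathfrak{g}_n$.

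To that end I would realise $\mathfrak{g}_n$ as a product in $A^\alpha(\D)$ and feed it into the key formula of Proposition \ref{Porfin}. Put $f_n:=W^{-\alpha}(D^\alpha f\cdot\mathbf{1}_{[0,n)})$, so that $\mathfrak{f}_n(z)=\sum_{j=0}^{n-1}D^\alpha f(j)\Delta^{-\alpha}\mathcal{Z}(j)$ (Lemma \ref{represen}) and $g_n=f_n\ast g$, where $g$ is the Taylor sequence of $1/\mathfrak{f}$. Using the $\alpha$-admissibility of $\mathfrak{f}$ (Definition \ref{a-admisi}) one checks that the pair $(f_n,g)$ meets hypotheses (i)--(iii) of Proposition \ref{Porfin}: the truncation $f_n$ is bounded and nonnegative with $D^\alpha f_n\ge0$ and $W^{-\alpha}(D^\alpha f_n)=f_n$ by construction; $g\in\tau^\alpha$ (Proposition \ref{inversein}) carries the required signs $D^\beta g(0)\ge0$, $D^\beta g(j)\le0$ ($\beta\in\{0,\alpha\}$); and $f_n\ast g=g_n\in\tau^\alpha$ since $\mathfrak{g}_n\in A^\alpha(\D)$. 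Proposition \ref{Porfin} then gives $W^\alpha g_n(v)$ as the difference of its two double sums.

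The naive sign reading of that formula is inconclusive, and this is the main obstacle: the $l=0$ term of the first double sum and the whole subtracted second double sum are nonnegative, while the remaining terms are nonpositive, so a cancellation must be argued. I would bypass the balancing by a complementary-truncation trick. Set $\rho_n:=f-f_n$; by linearity of $W^{-\alpha}$ it satisfies the same hypotheses (i) as $f$, and its partner is $r_n:=\rho_n\ast g=\delta_0-g_n$, so that $W^\alpha g_n(v)=\delta_0(v)-W^\alpha r_n(v)$. Now apply Proposition \ref{Porfin} to whichever of $g_n$ or $r_n$ annihilates one of the two double sums. For $v\ge n$ use $g_n$: since $D^\alpha f_n(j)=0$ for $j\ge n$, the second double sum disappears, and in the first one every index obeys $l\ge v-j\ge v-(n-1)\ge1$, so $W^\alpha g(l)\le0$ and therefore $W^\alpha g_n(v)\le0$. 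For $0\le v<n$ use $r_n$: since $D^\alpha\rho_n(j)=0$ for $j\le v<n$, the first double sum disappears, while in the subtracted second one $l\ge v+1\ge1$, so $W^\alpha g(l)\le0$ and the leading minus sign forces $W^\alpha r_n(v)\ge0$; this yields $W^\alpha g_n(v)=-W^\alpha r_n(v)\le0$ for $1\le v<n$ and $W^\alpha g_n(0)=1-W^\alpha r_n(0)\le1$.

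It remains only to record the lower bound at the origin, which comes from the $g_n$-formula at $v=0$ directly, namely $W^\alpha g_n(0)=D^\alpha f_n(0)\,W^\alpha g(0)-\sum_{j=1}^\infty\sum_{l=1}^\infty k^\alpha(l+j)\,D^\alpha f_n(j)\,W^\alpha g(l)\ge0$, since the first term is nonnegative and the subtracted double sum is nonpositive. Assembling the three ranges gives the full sign pattern $0\le W^\alpha g_n(0)\le1$ and $W^\alpha g_n(v)\le0$ ($v\ge1$), whereupon the identity of the first paragraph closes the argument. The only genuinely delicate points are the verification that both $(f_n,g)$ and $(\rho_n,g)$ satisfy the hypotheses of Proposition \ref{Porfin} and the observation that the two truncations kill complementary halves of its formula; granting these, the signs are immediate and no further estimation is needed.
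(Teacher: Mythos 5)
Your proposal is correct and follows essentially the same route as the paper's proof: you apply Proposition \ref{Porfin} to the truncation $f_n$ (which kills the second double sum, handling $v\ge n$) and to its complement $\rho_n=f-f_n$ (the paper's $\varphi_n$, which kills the first double sum, handling $0\le v<n$), obtaining exactly the paper's sign pattern $0\le W^\alpha g_n(0)\le 1$ and $W^\alpha g_n(v)\le 0$ for $v\ge 1$, and then conclude via the norm identity. The only flaw is a sign slip in your opening paragraph: with that sign pattern the identity reads $\lVert \mathfrak{g}_n\rVert_{A^\alpha(\D)}+\mathfrak{g}_n(1)=2W^\alpha g_n(0)$, i.e. $\lVert \mathfrak{g}_n\rVert_{A^\alpha(\D)}=2W^\alpha g_n(0)-\mathfrak{g}_n(1)\le 2-\mathfrak{g}_n(1)$, not $\lVert \mathfrak{g}_n\rVert_{A^\alpha(\D)}-\mathfrak{g}_n(1)=2W^\alpha g_n(0)$ (which would only give the weaker bound $2+\mathfrak{g}_n(1)$); the correction is immediate from the signs you established.
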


\begin{proof}
By Lemma \ref{represen} one has
$\mathfrak{f}(z)=\sum_{j=0}^\infty D^{\alpha}f(j)\Delta^{-\alpha}\mathcal{Z}(j)$ for all $z\in\D.$ This series converges absolutely and uniformly on compacts subsets. Set
$$
{\frak F}_n(z)
:=\sum_{j=n}^\infty D^\alpha f(j)\Delta^{-\alpha}\mathcal{Z}(j).
$$
It is readily seen that the sequence of coefficients $f_n$ of $\mathfrak{f}_n=\mathfrak{g}_{n}\mathfrak{f}$ is such that
$$
D^\alpha f_n(j)=D^\alpha f(j), \hbox{ if } 0\le j<n;\quad
D^\alpha f_n(j)=0, \hbox{ if } j\ge n.
$$
Therefore the sequence of coefficients $\varphi_n$ of ${\frak F}_n$ satisfies
$$
D^\alpha \varphi_n(j)=0,\ \hbox{ if } 0\le j<n;\,
D^\alpha \varphi_n(j)=D^\alpha f(j),\ \hbox{ if } j\ge n,
$$
since $D^{\alpha}\varphi_n=D^{\alpha}f-D^{\alpha}f_n.$ Thus applying Proposition \ref{Porfin} to the polynomial $\mathfrak f_n$ we have
$$
W^\alpha g_n(v)=W^\alpha(f_n\ast g)(v):=\sum_{j=0}^{n-1}\sum_{l=v-j}^v
k^\alpha(l+j-v)D^\alpha f(j)W^\alpha g(l)\le0
$$
for every $v\ge n$. Furthermore $$W^\alpha g_n(0)=D^\alpha f(0)W^\alpha g(0)-\sum_{j=1}^{n-1}\sum_{l=1}^\infty
k^\alpha(l+j)D^\alpha f(j)W^\alpha g(l)\geq 0.$$

On the other hand $\varphi_n\ast g=f\ast g-f_n\ast g=\delta_0-f_n\ast g\in\tau^\alpha$ whence applying
Proposition \ref{Porfin} to $\varphi_n\ast g$,
\begin{eqnarray*}
W^\alpha g_n(v)&=&W^\alpha\delta_0(v)-W^\alpha(\varphi_n\ast g)(v)=
\delta_0(v)-W^\alpha(\varphi_n\ast g)(v)\\
&=&\sum_{j=n}^\infty\sum_{l=v+1}^\infty
k^\alpha(l+j-v)D^\alpha f(j)W^\alpha g(l)\le0
\end{eqnarray*}
for every $v$ such that $1\le v\le n-1$. In addition, $$W^\alpha g_n(0)=1+\sum_{j=n}^\infty\sum_{l=1}^\infty
k^\alpha(l+j)D^\alpha f(j)W^\alpha g(l)\leq 1.$$

All in all,
\begin{eqnarray*}
\lVert\mathfrak{g}_{n}\rVert_{A^{\alpha}(\mathbb{D})}&=&\sum_{j=0}^{\infty}|W^{\alpha}g_{n}(j)|k^{\alpha+1}(j)=W^{\alpha}g_{n}(0)-\sum_{j=1}^{\infty}W^{\alpha}g_{n}(j)k^{\alpha+1}(j)\\
&=&2W^{\alpha}g_{n}(0)-\sum_{j=0}^{\infty}W^{\alpha}g_{n}(j)k^{\alpha+1}(j)=2W^{\alpha}g_{n}(0)-\mathfrak{g}_{n}(1)
\leq 2-\mathfrak{g}_{n}(1),
\end{eqnarray*}
as we wanted to proof.
\end{proof}

Analytic polynomials are dense in $A^\alpha(\D)$, which implies that the set of polynomials vanishing at $z=1$ is dense in the closed ideal of $A^\alpha(\D)$ formed by the functions which are zero at $z=1$. Theorem \ref{BAI} 
shows in particular that the sequence 
of functions $(\mathfrak{g}_{n})_{n\ge1}$ is a bounded approximate identity for that ideal. Even more, if
$\mathfrak{f}(1)<\infty$, that family converges in norm to the identity element in the algebra $A^\alpha(\D)$.

\begin{theorem}\label{BAI}
Let $\mathfrak{f}$ be an $\alpha$-admissible function and set $\mathfrak{g}_{n}$ as above.
\begin{itemize}
\item[(i)] If $\mathfrak{f}(1)<\infty$ then $\lim_{n\to\infty}\mathfrak{g}_{n}(z)=1 \ \text{in }A^{\alpha}(\D)$.
\item[(ii)] If $\mathfrak{f}(1)=\infty$ then
$\Vert \mathfrak{g}_n\Vert_{A^{\alpha}(\D)}\le2$ for every $n$.
\item[(iii)]  If $(1-z)\mathfrak{f}(z)\in A^{\alpha}(\D)$ and $D^{\alpha}f(j)j^{\alpha}\to 0$ as $j\to\infty,$ then
$$
\lim_{n\to\infty}(1-z)\mathfrak{g}_{n}(z)=1-z\ \text{in }A^{\alpha}(\D).
$$
\end{itemize}
\end{theorem}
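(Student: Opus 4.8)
The plan is to read (i) and (ii) off Theorem~\ref{lemma6.1} almost for free, and to concentrate the work on (iii). The common starting point is the value $\mathfrak g_n(1)$. Since $\Delta^{-\alpha}\mathcal Z(j)$ at $z=1$ equals $\sum_{l=0}^j k^\alpha(j-l)=k^{\alpha+1}(j)$, the definition of $\mathfrak g_n$ gives, when $\mathfrak f(1)<\infty$,
$$
\mathfrak g_n(1)=\frac{1}{\mathfrak f(1)}\sum_{j=0}^{n-1}D^\alpha f(j)\,k^{\alpha+1}(j),
$$
and Lemma~\ref{represen} evaluated at $z=1$ yields $\mathfrak f(1)=\sum_{j=0}^\infty D^\alpha f(j)k^{\alpha+1}(j)$, so $\mathfrak g_n(1)\to1$. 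When $\mathfrak f(1)=\infty$ the factor $1/\mathfrak f$ vanishes at $z=1$ while the polynomial factor stays bounded, whence $\mathfrak g_n(1)=0$.

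For (i) and (ii) I would exploit the sign information proved inside Theorem~\ref{lemma6.1}, namely $W^\alpha g_n(0)\le1$ and $W^\alpha g_n(j)\le0$ for $j\ge1$. As the Taylor coefficients of the constant $1$ are $\delta_0$ and $W^\alpha\delta_0=\delta_0$, these signs make the norm collapse exactly:
$$
\lVert\mathfrak g_n-1\rVert_{A^\alpha(\D)}=(1-W^\alpha g_n(0))+\sum_{j\ge1}\bigl(-W^\alpha g_n(j)\bigr)k^{\alpha+1}(j)=1-\sum_{j\ge0}W^\alpha g_n(j)k^{\alpha+1}(j)=1-\mathfrak g_n(1),
$$
the last step by \eqref{int_rep} at $z=1$. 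Part (i) then follows from $\mathfrak g_n(1)\to1$, and part (ii) is immediate since Theorem~\ref{lemma6.1} gives $\lVert\mathfrak g_n\rVert_{A^\alpha(\D)}\le2-\mathfrak g_n(1)=2$.

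For (iii), set $\mathfrak F_n:=\sum_{j\ge n}D^\alpha f(j)\Delta^{-\alpha}\mathcal Z(j)=\mathfrak f-\sum_{j=0}^{n-1}D^\alpha f(j)\Delta^{-\alpha}\mathcal Z(j)$, so that $1-\mathfrak g_n=\mathfrak F_n/\mathfrak f$, and factor $1-z-(1-z)\mathfrak g_n=\tfrac{1}{\mathfrak f}\,(1-z)\mathfrak F_n$. Since $1/\mathfrak f\in A^\alpha(\D)$ by Proposition~\ref{inversein} and $A^\alpha(\D)$ is a Banach algebra, it suffices to show $\lVert(1-z)\mathfrak F_n\rVert_{A^\alpha(\D)}\to0$. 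The engine is an explicit computation of the coefficients of $(1-z)\Delta^{-\alpha}\mathcal Z(j)$: using $k^{-\alpha}\ast k^\alpha=\delta_0$ one finds $W^\alpha\!\bigl((1-z)\Delta^{-\alpha}\mathcal Z(j)\bigr)(v)=\delta_j(v)-\delta_{j+1}(v)$ for $v\ge1$, with a boundary value $k^\alpha(j+1)$ at $v=0$. Because $(1-z)\mathfrak F_n=(1-z)\mathfrak f-(1-z)\sum_{j<n}D^\alpha f(j)\Delta^{-\alpha}\mathcal Z(j)$ is a genuine element of $A^\alpha(\D)$, summing these against $D^\alpha f(j)$ over $j\ge n$ splits the norm into three pieces: a $v=0$ term $\sum_{j\ge n}D^\alpha f(j)k^\alpha(j+1)$, the single $v=n$ term $D^\alpha f(n)k^{\alpha+1}(n)$, and a $v>n$ tail $\sum_{v>n}\lvert D^\alpha f(v)-D^\alpha f(v-1)\rvert k^{\alpha+1}(v)$. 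Applying the same coefficient formula to $(1-z)\mathfrak f$ itself shows its $W^\alpha$-coefficients are $D^\alpha f(v)-D^\alpha f(v-1)$ for $v\ge1$ and $D^\alpha f(0)+\sum_{j\ge0}D^\alpha f(j)k^\alpha(j+1)$ at $v=0$; membership $(1-z)\mathfrak f\in A^\alpha(\D)$ thus forces both $\sum_j D^\alpha f(j)k^\alpha(j+1)<\infty$ and $\sum_{v\ge1}\lvert D^\alpha f(v)-D^\alpha f(v-1)\rvert k^{\alpha+1}(v)<\infty$, so the first and third pieces are tails of convergent series and vanish, while the middle term equals $\Gamma(\alpha+1)^{-1}D^\alpha f(n)n^\alpha(1+O(1/n))\to0$ by the hypothesis $D^\alpha f(n)n^\alpha\to0$ and \eqref{double2}.

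The hard part is precisely this computation for (iii), together with the realization that a crude estimate is hopeless: one has $\lVert(1-z)\Delta^{-\alpha}\mathcal Z(j)\rVert_{A^\alpha(\D)}\sim 2\Gamma(\alpha+1)^{-1}j^\alpha$, so bounding $\lVert(1-z)\mathfrak F_n\rVert$ term by term would demand $\sum_j D^\alpha f(j)j^\alpha<\infty$, which is strictly stronger than the given $D^\alpha f(j)j^\alpha\to0$. The cancellation that saves the argument is the telescoping $\delta_j-\delta_{j+1}$ for $v\ge1$, and the delicate point is the anomalous boundary behaviour at $v=0$, where the shift no longer commutes with $W^\alpha$ and produces the extra term $k^\alpha(j+1)$; getting that term right, and recognizing that it too is a tail of a series forced to converge by $(1-z)\mathfrak f\in A^\alpha(\D)$, is the crux.
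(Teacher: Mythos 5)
Your proof is correct and follows essentially the same route as the paper: parts (i)--(ii) are read off from the sign structure and norm identity established in Theorem \ref{lemma6.1} together with Lemma \ref{represen} evaluated at $z=1$, and part (iii) rests on the shift identity $z\Delta^{-\alpha}\mathcal{Z}(j)=\Delta^{-\alpha}\mathcal{Z}(j+1)-k^{\alpha}(j+1)$, Abel summation, identification of the resulting expansion of $(1-z)\mathfrak{f}$ with its canonical representation (the paper does this explicitly via Lemma \ref{Unicity}), and the hypothesis $D^{\alpha}f(j)j^{\alpha}\to 0$ to kill the boundary term $D^{\alpha}f(n)k^{\alpha+1}(n)$. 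The only difference is organizational: you factor out $1/\mathfrak{f}$ by the Banach algebra property and estimate $(1-z)\mathfrak{F}_n$ directly, whereas the paper splits $(1-z)\mathfrak{g}_{n}=\mathfrak{h}_n-\mathfrak{r}_{n}$ and compares $\mathfrak{h}_n\mathfrak{f}$ with $(1-z)\mathfrak{f}$; the three pieces you isolate are exactly the paper's two tail series plus its remainder $\mathfrak{r}_n$.
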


\begin{proof}
(i) If $\mathfrak{f}(1)<\infty,$ by Lemma \ref{represen} we have $$\mathfrak{f}(1)=\lim_{0<z\nearrow 1}\sum_{j=0}^{\infty}D^{\alpha}f(j)\Delta^{-\alpha}\mathcal{Z}(j)=\sum_{j=0}^{\infty}D^{\alpha}f(j)k^{\alpha+1}(j).$$ Therefore $$\lVert \mathfrak{g}_{n}-1 \rVert_{A^{\alpha}(\D)}=\frac{1}{\mathfrak{f}(1)}\sum_{j=n}^{\infty}D^{\alpha}f(j)k^{\alpha+1}(j)\to 0,\quad n\to\infty.$$

(ii) If $\mathfrak{f}(1)=\infty,$ then the proof of Theorem \ref{lemma6.1} gives $$\Vert \mathfrak{g}_n\Vert_{A^{\alpha}(\D)}=2W^{\alpha}g_{n}(0)-\mathfrak{g}_{n}(1)=2W^{\alpha}g_{n}(0)-\frac{1}{\mathfrak{f}(1)}\sum_{j=0}^{n-1}D^{\alpha}f(j)k^{\alpha+1}(j)\leq 2.$$

(iii) Note that
$$
(1-z)\mathfrak{g}_{n}(z)=\frac{1}{\mathfrak{f}(z)}
\biggl( \sum_{j=0}^{n-1}D^{\alpha}f(j)\Delta^{-\alpha}\mathcal{Z}(j)
-\sum_{j=0}^{n-1}D^{\alpha}f(j)z\Delta^{-\alpha}\mathcal{Z}(j) \biggr).
$$
with
\begin{eqnarray*}
\displaystyle\sum_{j=0}^{n-1}D^{\alpha}f(j)z\Delta^{-\alpha}\mathcal{Z}(j)&=&\sum_{j=1}^{n}D^{\alpha}f(j-1)\sum_{l=1}^{j}k^{\alpha}(j-l)z^{l}\\
&=&\sum_{j=1}^{n}D^{\alpha}f(j-1)(\Delta^{-\alpha}\mathcal{Z}(j)-k^{\alpha}(j)).
\end{eqnarray*}

Put
\begin{equation}\label{4.2}
(1-z)\mathfrak{g}_{n}(z)=\mathfrak{h}_n(z)-\mathfrak{r}_{n}(z)
\end{equation}
where
\begin{eqnarray*}
\mathfrak{h}_n(z)&:=&\frac{1}{\mathfrak{f}(z)}\biggl( D^{\alpha}f(0)+\displaystyle\sum_{j=1}^{n-1}(D^{\alpha}f(j)-D^{\alpha}f(j-1))\Delta^{-\alpha}\mathcal{Z}(j)+\sum_{j=1}^{n}D^{\alpha}f(j-1)k^{\alpha}(j)\biggr)\\
&=&\frac{1}{\mathfrak{f}(z)}\biggl(
D^{\alpha}f(0)+\sum_{j=1}^{n}D^{\alpha}f(j-1)k^{\alpha}(j)-\displaystyle\sum_{j=1}^{n-1}D^{\alpha+1}f(j-1)\Delta^{-\alpha}\mathcal{Z}(j)\biggr)
\end{eqnarray*}
and
$\displaystyle\mathfrak{r}_n(z):=\frac{1}{\mathfrak{f}(z)}D^{\alpha}f(n-1)\Delta^{-\alpha}\mathcal{Z}(n)$.

Let us remark that both $\mathfrak{h}_n$ and $\mathfrak{r}_n$
belong to $A^{\alpha}(\D)$. We claim that
\begin{equation}\label{hache}
\lim_{n\to\infty}\Vert \mathfrak{h}_{n}-(1-z) \Vert_{A^{\alpha}(\D)}=0 \quad
\hbox{ and } \quad
\lim_{n\to\infty}\Vert\mathfrak{r}_n\Vert _{A^{\alpha}(\D)}=0.
\end{equation}
To see this, note that
$$
\Vert \mathfrak{h}_{n}-(1-z) \Vert_{A^{\alpha}(\D)}\leq M_\alpha  \lVert\frac{1}{\mathfrak{f}}\rVert_{A^{\alpha}(\mathbb{D})}\lVert \mathfrak{h}_{n}\mathfrak{f}-(1-z)\mathfrak{f}\rVert_{A^{\alpha}(\mathbb{D})},
$$

By Lemma \ref{represen},
\begin{eqnarray*}
(1-z)\mathfrak{f}(z)&=&\displaystyle\sum_{j=0}^{\infty}D^{\alpha}f(j)\Delta^{-\alpha}\mathcal{Z}(j)
-\sum_{j=0}^{\infty}D^{\alpha}f(j)z\Delta^{-\alpha}\mathcal{Z}(j)\\
&=&D^{\alpha}f(0)+\displaystyle\sum_{j=1}^{\infty}(D^{\alpha}f(j)-D^{\alpha}f(j-1))\Delta^{-\alpha}\mathcal{Z}(j)
+\displaystyle\sum_{j=1}^{\infty}D^{\alpha}f(j-1)k^{\alpha}(j)\\
&=&D^{\alpha}f(0)+\displaystyle\sum_{j=1}^{\infty}D^{\alpha}f(j-1)k^{\alpha}(j)
-\displaystyle\sum_{j=1}^{\infty}D^{\alpha+1}f(j-1)\Delta^{-\alpha}\mathcal{Z}(j),
\end{eqnarray*}
where the series in the latter line converges since
$$
\sum_{j=1}^{\infty}D^{\alpha}f(j-1)k^{\alpha}(j)
\le K\sum_{j=0}^{\infty}D^{\alpha}f(j)k^{\alpha}(j)=K\mathfrak{f}(0),
$$
for some constant $K>0.$
Moreover, the expansion
$$
(1-z)\mathfrak{f}(z)=D^{\alpha}f(0)
+\displaystyle\sum_{j=1}^{\infty}D^{\alpha}f(j-1)k^{\alpha}(j)
-\displaystyle\sum_{j=1}^{\infty}D^{\alpha+1}f(j-1)\Delta^{-\alpha}\mathcal{Z}(j)
$$
shows that the series gives us the representation (\ref{int_rep}) for
$(1-z)\mathfrak{f}$ as an element of $A^{\alpha}(\D)$ by
\eqref{deltaestimate}, \eqref{deltaestimate2} and Lemma \ref{Unicity}.
Hence,
\begin{eqnarray*}
\lVert \mathfrak{h}_{n}\mathfrak{f}-(1-z)\mathfrak{f}\rVert_{A^{\alpha}(\mathbb{D})}&=&\lVert
-\displaystyle\sum_{j=n+1}^{\infty}D^{\alpha}f(j-1)k^{\alpha}(j)
+\displaystyle\sum_{j=n}^{\infty}D^{\alpha+1}f(j-1)\Delta^{-\alpha}\mathcal{Z}(j)\rVert_{A^{\alpha}(\mathbb{D})}\\
&=&\displaystyle\sum_{j=n+1}^{\infty}D^{\alpha}f(j-1)k^{\alpha}(j)+
\displaystyle
\sum_{j=n}^{\infty}|D^{\alpha+1}f(j-1)|k^{\alpha+1}(j),
\end{eqnarray*}
whence
$\lVert \mathfrak{h}_{n}-(1-z) \rVert_{A^{\alpha}(\D)}\to 0$ as $n\to\infty.$

On the other hand,
\begin{eqnarray*}
\lVert \mathfrak{r}_n\rVert_{A^{\alpha}(\D)}
&\leq& M_\alpha \Vert 1/\mathfrak{f}\rVert_{A^{\alpha}(\D)}
\lVert D^{\alpha}f(n-1) \Delta^{-\alpha}{\mathcal Z}(n)\rVert_{A^{\alpha}(\D)}\\
&=&M_\alpha \Vert 1/\mathfrak{f}\rVert_{A^{\alpha}(\D)}
 D^{\alpha}f(n-1) k^{\alpha+1}(n)\to 0,\quad \text{ as }n\to\infty,
\end{eqnarray*}
by the assumption in part (iii). Thus the proof is over.
\end{proof}

In the following section we provide examples of $\alpha$-admissible functions satisfying the conditions assumed formerly.

\section{Examples}\label{ejemplos}\label{exadmisi}
\setcounter{theorem}{0}
\setcounter{equation}{0}

In \cite {HT}, examples of $0$-admissible functions are given on the basis of a classical theorem of Kaluza involving decreasing  log-convex sequences
(and a shorter, new, proof of such a theorem is also given in \cite[Prop. 4.4]{HT}).
Here we present a generalized version of Kaluza's theorem to  log-convex sequences of higher order.
Though this result will not be applied until the occurrence of our concrete second example of $\alpha$-admissible function, it is stated right now because it provides a set of examples of a general character.

\medskip
{\bf Log-convex sequences of higher difference order}.
We say that  a non-identically zero sequence $f$
is logarithmically convex, or log-convex for short, of degree $m\in\N_0$ if for every $p\in\{0,1,\dots,m\}$
and $j\ge0$ one has
$$
D^p f(j)>0\ \hbox{ and }\ \left(D^p f(j+1)\right)^2\le D^p f(j) D^p f(j+2),\ \forall j\ge0.
$$
In other words, the sequence
$(D^p f(j+1)/D^p f(j))_{j=0}^\infty$ is increasing for all $p\in\{0,1,\dots,m\}$.

\begin{remark}\label{remark7.1}\normalfont If a decreasing sequence $f$ satisfies that $D^p f(j)>0,$ for $j\ge0$ and 
$0\leq p\leq m$ with $p\in\N_0,$ then $D^\beta f(j)>0$ for $j\ge0$  and $0\leq\beta\leq m$ with $\beta$ a real number. Indeed,  if $p-1<\beta<p$ with $p\in\N$ and $p\leq m,$  by Fubini's Theorem we have 
$D^\beta f(j)=D^{\beta-(p-1)}D^{p-1}f(j)=\sum_{l=0}^{\infty}k^{p-1-\beta}(l)D^{p-1}f(l+j) 
=\sum_{l=1}^{\infty}k^{p-1-\beta}(l)(D^{p-1}f(l+j)-D^{p-1}f(j))>0$,
since  $D^{p}f>0$ on $\N_0$ entails $D^{p-1}f$ decreasing on $\N_0$.
\end{remark}

The following is the generalization of Kaluza's theorem mentioned before.

\begin{theorem}\label{logconvex}
Let $\frak f$ be a holomorphic function on the unit disc $\D$ of Taylor series $\sum_{j=0}^\infty f(j)z^j$ such that the sequence $f:=(f(j))_{j=0}^\infty$ is decreasing, log-convex of degree $m$, with $W^{-\beta}(D^{\beta}f)=f$ and 
$D^{\beta}f(j)j^{\beta}\to 0$ $(j\to\infty)$ for $0\leq \beta\leq m$. Then $\frak f$ is zero-free on $\D$ and,
if ${1\over \frak f(z)}=\sum_{j=0}^\infty g(j)z^n$, one has
$$
W^\beta g(0)>0 \quad \hbox{ and }\quad  W^\beta g(j)<0,\quad j\ge1,\  0\le \beta\le m.
$$
That is, $\frak f$ is $\beta$-admissible for each $0\leq \beta\leq m$, and therefore in particular 
$\displaystyle{1\over \frak f(z)}$ belongs to $A^m(\D)$.
\end{theorem}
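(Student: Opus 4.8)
The statement packages two conclusions: that $\frak f$ has no zeros (first on $\D$, ultimately on $\overline\D$) and that the reciprocal coefficients $g$ obey the sign pattern $W^\beta g(0)>0$, $W^\beta g(j)<0$ $(j\ge1)$ for every $0\le\beta\le m$. Together with the hypothesis (the positivity and inversion properties of $D^\beta f$), the sign pattern is exactly condition (ii) in the definition of $\beta$-admissibility (Definition \ref{a-admisi}), so the whole theorem reduces to (a) producing the sign pattern and (b) feeding it into Proposition \ref{inversein}. The plan is therefore to dispatch the cheap structural facts first and concentrate on the sign pattern.

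First I would record that $\frak f$ is zero-free on $\D$: since $f(j)>0$ and $f$ is decreasing, $(1-z)\frak f(z)=f(0)-\sum_{j\ge1}D^1f(j-1)z^j$ with $D^1f\ge0$ and $\sum_{j\ge1}D^1f(j-1)=f(0)-\lim_jf(j)\le f(0)$, so $(1-z)\frak f$ has strictly positive real part on $\D$ (and is positive on $[0,1)$), whence $\frak f\neq0$ on $\D$. Also $\frak f(1)=\sum_jf(j)\in(0,\infty]$, so $\frak f(1)\neq0$ and Proposition \ref{inversein} will be applicable once the signs are known. Thus everything rests on the sign pattern of $W^\beta g=D^\beta g$.

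The next reduction passes from fractional to integer orders. For $p-1<\beta<p\le m$ write $D^\beta g(j)=\sum_{l\ge0}k^{-(\beta-p+1)}(l)\,D^{p-1}g(j+l)$ and subtract $D^{p-1}g(j)\sum_{l\ge0}k^{-(\beta-p+1)}(l)=0$, using \eqref{zero}, to get
$$
D^\beta g(j)=\sum_{l\ge1}k^{-(\beta-p+1)}(l)\bigl(D^{p-1}g(j+l)-D^{p-1}g(j)\bigr).
$$
Since $k^{-(\beta-p+1)}(l)<0$ for $l\ge1$ by \eqref{sign}, the desired signs of $D^\beta g$ follow at once from the integer-order statements $D^{p-1}g(0)>0$, $D^{p-1}g(j)<0$ $(j\ge1)$, and $D^{p-1}g$ \emph{increasing} on $\{j\ge1\}$ (the last being equivalent to $D^pg(j)<0$, $j\ge1$): indeed for $j\ge1$ the bracket is positive, giving $D^\beta g(j)<0$, while for $j=0$ it is negative, giving $D^\beta g(0)>0$. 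This is the exact analogue, for $g$, of the computation carried out for $f$ in Remark \ref{remark7.1}, so it suffices to establish the sign pattern for every integer order $0\le p\le m$.

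The integer case is the heart of the matter, and I would prove it by induction on $p$, the base $p=0$ being the classical Kaluza sign theorem: from $f\ast g=\delta_0$ one has $g(0)=1/f(0)>0$, and, exploiting that the ratios $f(j+1)/f(j)$ increase (log-convexity of $f$), a monotone-ratio induction forces $g(j)<0$ for $j\ge1$. For the step from $p-1$ to $p$ the task is precisely to show $D^{p-1}g$ is increasing on $\{j\ge1\}$, i.e. $D^pg(j)<0$, and this is where the log-convexity of the higher difference $D^pf$ is consumed, via the identity $f\ast g=\delta_0$ read at the level of $p$-th differences. The main obstacle is that $D^p=W^p$ does not commute with the one-sided convolution $f\ast g$: passing the difference onto one factor produces boundary terms (finite sums in $g(n+1),\dots,g(n+p)$ together with tails of $D^pf$), and one must check these do not spoil the sign of $D^pg$. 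Such tails are exactly of the type controlled in Theorem \ref{BAI} by the standing hypotheses $W^{-\beta}(D^\beta f)=f$ and $D^\beta f(j)j^\beta\to0$, and the combinatorial formula of Proposition \ref{Porfin} (resting on Lemma \ref{tech}) is the natural device for organizing the boundary contributions; the monotone-ratio invariant of the base case is then propagated using that $D^{p+1}f>0$ keeps $D^pf$ decreasing and log-convex. I expect this sign bookkeeping, rather than any single estimate, to be the only genuinely delicate point.

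Finally, with the sign pattern established for every $0\le\beta\le m$, condition (ii) of Definition \ref{a-admisi} holds, and combined with hypothesis (i) it yields that $\frak f$ is $\beta$-admissible for each such $\beta$. Applying Proposition \ref{inversein} with $\alpha=m$—legitimate since $\frak f$ is zero-free on $\D$ and $\frak f(1)\neq0$—then gives $1/\frak f\in A^m(\D)$ together with zero-freeness of $\frak f$ on $\overline\D$, which completes the proof.
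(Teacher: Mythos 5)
Your skeleton matches the paper's proof: the classical Kaluza theorem as the base case $p=0$, an induction on the integer order $p$ organized around $f\ast g=\delta_0$ and Proposition \ref{Porfin}, then the reduction of fractional $\beta\in(p-1,p)$ to integer order via
$D^\beta g(j)=\sum_{l\ge1}k^{p-1-\beta}(l)\bigl(D^{p-1}g(j+l)-D^{p-1}g(j)\bigr)$,
the sign pattern \eqref{sign} of $k^{p-1-\beta}$ and the monotonicity of $D^{p-1}g$ --- this last computation is essentially verbatim how the paper closes its proof --- and finally Proposition \ref{inversein} to get $1/\mathfrak{f}\in A^{m}(\D)$ and zero-freeness on $\overline{\D}$. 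Your direct zero-freeness argument on $\D$ (positivity of the real part of $(1-z)\mathfrak{f}(z)$) is a sound substitute for the paper's citation of \cite[Prop. 4.4]{HT}.

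However, there is a genuine gap at what you yourself call the heart of the matter: the induction step from $(p-1)$-admissibility to $p$-admissibility, i.e.\ the proof that $W^{p}g(j)<0$ for all $j\ge1$, is never carried out; it is only described as ``sign bookkeeping'' in which ``the monotone-ratio invariant \ldots is then propagated.'' This description does not survive an attempt at execution. What Proposition \ref{Porfin} and $W^{p}(f\ast g)(v)=0$ give is only the identity of Lemma \ref{Lemma7.6},
$$
\sum_{l=0}^{v} W^{p} g(l)\sum_{j=v-l}^{v} k^{p}(j+l-v)D^{p} f(j)
=\sum_{l=v+1}^{\infty} W^{p} g(l)\sum_{j=v+1}^{\infty} k^{p}(j+l-v) D^{p} f(j),
$$
in which every kernel coefficient $k^{p}(j+l-v)$ and every $D^{p}f(j)$ is positive, so no sign of any individual $W^{p}g(l)$ can be read off, and log-convexity has not yet entered at all. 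The paper needs a second induction, on $v$, together with a specific device absent from your proposal: write the identity at the two consecutive indices $v$ and $v+1$, multiply the first by $D^{p}f(v+1)$ and the second by $D^{p}f(v)$, and subtract; after rearrangement every surviving term carries a factor
$D^{p}f(j)\bigl(\frac{D^{p}f(v+1)}{D^{p}f(v)}-\frac{D^{p}f(j+1)}{D^{p}f(j)}\bigr)$,
whose sign is dictated by the monotonicity of the ratios $D^{p}f(j+1)/D^{p}f(j)$ (log-convexity of degree $m$) according to whether $j\le v$ or $j\ge v+1$; combined with the inner induction hypothesis $W^{p}g(l)<0$ for $1\le l\le v$, this isolates $W^{p}g(v+1)f(0)$ with strictly negative sign (and a separate computation is still needed for the inner base case $v=1$). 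Without this cross-multiplication the terms simply do not combine into expressions of one sign, so the proposal as written cannot be completed along the lines stated. A minor additional inaccuracy: the tail/boundary control you attribute to Theorem \ref{BAI} is in fact performed inside the proofs of Proposition \ref{Porfin} and Lemma \ref{Lemma7.6}, via Lemma \ref{represen} and the hypotheses $W^{-\beta}(D^{\beta}f)=f$ and $D^{\beta}f(j)j^{\beta}\to0$; Theorem \ref{BAI} plays no role in the appendix.
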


The proof of Theorem \ref{logconvex}  is long, and we prefer to include it at the end of this paper as an appendix.
We now proceed with specific examples.

\begin{example}\label{Ex63}
\normalfont
Let $0<s<1$.
The function $\mathfrak{k}^{s}(z)=(1-z)^{-s}$ is
$\alpha$-admissible for all $\alpha\geq 0$.
Indeed, the sequence of its coefficients $k^{s}$ is bounded, and by Example \ref{ex22} (ii),
$$
D^{\alpha} k^s(n)=\frac{\Gamma(1-s+\alpha)\Gamma(s+n)}{\Gamma(s)\Gamma(1-s)\Gamma(n+\alpha+1)}>0,
\quad n\in\N_0.
$$

Moreover, for every $n\in\N_0$, by \eqref{generating} we have
\begin{eqnarray*}
W^{-\alpha}D^{\alpha}k^{s}(n)
&=&\frac{1}{\Gamma(s)\Gamma(1-s)}\sum_{l=0}^{\infty}k^{\alpha}(l)\int_{0}^1(1-x)^{s+n+l+1}x^{\alpha-s}\, dx\\
&=&\frac{1}{\Gamma(s)\Gamma(1-s)}\int_{0}^1(1-x)^{s+n+1}x^{-s}\, dx
=\frac{\Gamma(s+n)}{\Gamma(s)n!}=k^{s}(n).
\end{eqnarray*}

Notice that $\mathfrak{k}^{s}$ does not have zeros on $\D$ --in fact 
$1/\mathfrak{k}^{s}=\mathfrak{k}^{-s}\in A^{\alpha}(\D)$,
see \eqref{double2}--, and by \cite[Lemma 1.1]{AM}
$$
W^{\alpha}k^{-s}(n)=D^{\alpha}k^{-s}(n)
=\frac{\Gamma(1+s+\alpha)\Gamma(-s+n)}{\Gamma(-s)\Gamma(1+s)\Gamma(n+\alpha+1)}<0,\quad n\in \N.
$$
Also,
\begin{eqnarray*}
W^{\alpha}k^{-s}(0)&=&\sum_{l=0}^{\infty}k^{-\alpha}(l)k^{-s}(l)
=1+\sum_{l=1}^{\infty}k^{-\alpha}(l)(k^{1-s}(l)-k^{1-s}(l-1))\\
&=&\sum_{l=0}^{\infty}k^{-\alpha}(l)k^{1-s}(l)-\sum_{l=0}^{\infty}\frac{(l-\alpha)}{l+1}k^{-\alpha}(l)k^{1-s}(l)\\
&=&\frac{(1+\alpha)}{\Gamma(1-s)\Gamma(1+s)}\int_0^1 x^{s+\alpha}(1-x)^{-s}\,dx
=\frac{\Gamma(s+\alpha+1)}{\Gamma(1+s)\Gamma(1+\alpha)}>0.
\end{eqnarray*}

Furthermore, $(1-z)\mathfrak{k}^{s}(z)=\mathfrak{k}^{s-1}(z)$ belongs to the algebra $A^{\alpha}(\D)$
and $D^{\alpha}k^{s}(n)n^{\alpha}\to 0$ as $n\to \infty$ (Example \ref{ex22} (ii)).
Therefore, functions $\mathfrak{k}^{s}$ satisfy the conditions of Theorem \ref{BAI}.
\end{example}

\begin{example}\label{Ex64}
\normalfont
The holomorphic function 
$\displaystyle\mathfrak{L}(z)=\frac{-\log(1-z)}{z}=\sum_{j=0}^{\infty}\frac{z^j}{j+1}
=\sum_{j=0}^{\infty}L(j)z^j$, $z\in\D$,
is log-convex, and $0$-admissible as pointed out in \cite[p. 283]{HT}.
By Example \ref{ex22} (iii), one has $D^{m}L(j)=\displaystyle\frac{m!}{(j+1)\ldots(j+m+1)}$, whence 
$(D^{m}L(j+1))^2<D^{m}L(j)D^{m}L(j+2)$ for every $m\in\N$.
Moreover, for $\alpha\geq 0,$ $D^{\alpha}L(j)j^{\alpha}\to 0$ as $j\to\infty$ (see Example \ref{ex22} (iii)), 
and by \eqref{generating} we have 
$$
W^{-\alpha}D^{\alpha}L(j)
=
\sum_{l=0}^{\infty}k^{\alpha}(l)\int_{0}^1(1-x)^{j+l}x^{\alpha}\, dx
=\int_{0}^1(1-x)^{j}\,dx=L(j).
$$
Then by Theorem \ref{logconvex}, the function $\mathfrak{L}$ is $\alpha$-admissible for all $\alpha\geq 0.$

Furthermore,
$(1-z)\mathfrak{L}(z)\in A^{\alpha}(\D)$,
which, along with $\lim_{j} D^{\alpha}L(j)j^{\alpha}\to 0$, impliy that $\mathfrak{f}$ satisfies the conditions of Theorem \ref{BAI}.
\end{example}

\section{Approximating identities with Taylor coefficients}\label{taylor}
\setcounter{theorem}{0}
\setcounter{equation}{0}

The construction of approximate identities in $A^\alpha(\D)$ carried out in Theorem \ref{BAI}
requires using partial sums formed with fractional differences. Such approximate identities are suitable to our objectives in Section \ref{DOF} and Section \ref{PoiLo} below, concerning general domains of operatorial functions or higher degree $\alpha$.
For the specific examples given in Section \ref{ejemplos} and $\alpha\in(0,1)$ one can obtain approximate identities from partial sums of Taylor expansions.

Let $\mathfrak{f}$ be a zero-free holomorphic function on the unit disc with 
$(1/\mathfrak{f})(z)=\sum_{j=0}^\infty g(j)z^j$.
Let then $\mathfrak{g}^0_n$ denote the approximating
function given in \cite[Lemma 4.6]{HT}, that is,
$\mathfrak{g}^0_n(z):=(1/\mathfrak{f})(z)\sum_{j=0}^{n-1} f(j)z^j$, $|z|\leq 1$.
Put $\mathfrak{f}^{0}_n(z):=\mathfrak{g}^{0}_{n}(z)\mathfrak{f}(z)$ and $g_{n}^0(j)=\sum_{l=0}^{n-1}f(l)g(j-l)$.
By \cite[Lemma 4.6]{HT} we have
\begin{equation}\label{eq6.1}\mathfrak{g}^0_n(z)=1+\sum_{j=n}^{\infty}g_n^0(j)z^j,\end{equation}

Theorem \ref{approximation01} and
Theorem \ref{approximation02} below prove, respectively for the $\alpha$-admissible functions
$\mathfrak{k}^{s}$ and $\mathfrak{L}$ of Section \ref{ejemplos}, and with $\alpha$ in a certain range of values, that
the sequence $(\mathfrak{g}^{0}_{n})_{n\ge1}$ is a bounded approximate identity for the closed ideal of
$A^\alpha(\D)$ of functions vanishing at $z=1$.

\begin{theorem}\label{approximation01}
Take $s$ such that $0<s<1$. Let $\mathfrak{f}=\mathfrak{k}^{s}$ and let $\mathfrak{g}^0_{n}$ be as above, 
corresponding to
$\mathfrak{f}=\mathfrak{k}^{s}$.
Then for $0\leq \alpha<1-s$ one has
\begin{itemize}
\item[(i)]  $\Vert \mathfrak{g}^{0}_n\Vert_{A^{\alpha}(\D)}\le M$ for every $n$, with $M>0$ independent of $n.$
\item[(ii)] $\lim_{n\to\infty}(1-z)\mathfrak{g}^0_{n}(z)=1-z\ \text{in }A^{\alpha}(\D).$
\end{itemize}
\end{theorem}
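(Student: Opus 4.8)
The plan is to reduce both statements to estimating the $A^\alpha(\mathbb{D})$-norm of the tail of $\mathfrak{k}^s$. Writing $\mathfrak{k}^s(z)=(1-z)^{-s}=\sum_{j\ge0}k^s(j)z^j$ and $R_n(z):=\sum_{j\ge n}k^s(j)z^j$, the splitting $(1-z)^{-s}=\sum_{j<n}k^s(j)z^j+R_n(z)$ gives $\mathfrak{g}^0_n(z)=(1-z)^s\sum_{j<n}k^s(j)z^j=1-(1-z)^sR_n(z)$. Hence part (i) is equivalent to a uniform bound on $\lVert(1-z)^sR_n\rVert_{A^\alpha(\mathbb{D})}$, while part (ii) is equivalent to $\lVert(1-z)^{s+1}R_n\rVert_{A^\alpha(\mathbb{D})}\to0$, since $(1-z)\mathfrak{g}^0_n(z)-(1-z)=-(1-z)^{s+1}R_n(z)$.

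First I would compute the Weyl differences of the coefficients. For $v\ge n$ the coefficient of $z^v$ in $1-\mathfrak{g}^0_n=(1-z)^sR_n$ equals $\sum_{l=n}^v k^{-s}(v-l)k^s(l)$, and the group property $k^{-s}\ast k^s=k^0=\delta_0$ turns this into $-\sum_{l=0}^{n-1}k^s(l)k^{-s}(v-l)$. Applying $W^\alpha=D^\alpha$ on $\tau^\alpha$ and the definitional identity $D^\alpha k^{-s}(r)=\sum_{q\ge0}k^{-\alpha}(q)k^{-s}(q+r)$, one obtains the clean formula
$$
W^\alpha g^0_n(v)=\sum_{l=0}^{n-1}k^s(l)\,D^\alpha k^{-s}(v-l),\qquad v\ge n.
$$
By the formula and sign of $D^\alpha k^{-s}$ in Example \ref{Ex63} (together with its transpose $D^sk^{-\alpha}$, which governs the range $0\le v<n$ and requires $0<\alpha<1$), one checks the sign pattern $W^\alpha g^0_n(v)\ge0$ for $0\le v\le n-1$ and $W^\alpha g^0_n(v)\le0$ for $v\ge n$. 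Since $\mathfrak{g}^0_n(1)=0$, evaluating \eqref{int_rep} at $z=1$ yields $\sum_v W^\alpha g^0_n(v)k^{\alpha+1}(v)=0$, so the positive and negative blocks carry equal weight and
$$
\lVert\mathfrak{g}^0_n\rVert_{A^\alpha(\mathbb{D})}=2\sum_{v=n}^\infty|W^\alpha g^0_n(v)|\,k^{\alpha+1}(v).
$$

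To finish part (i) I would insert the asymptotics $k^s(l)\sim l^{s-1}/\Gamma(s)$, $k^{\alpha+1}(v)\sim v^\alpha/\Gamma(\alpha+1)$ from \eqref{double2}, together with $|D^\alpha k^{-s}(r)|\sim C\,r^{-s-\alpha-1}$ (the $s\mapsto-s$ analogue of \eqref{2.11}), interchange the sums, and compare the resulting double sum with an integral. After the rescaling $l=nx$, $v-l=n\rho$ all powers of $n$ cancel, and the bound reduces to finiteness of $\int_0^1\!\int_{1-x}^\infty x^{s-1}\rho^{-s-\alpha-1}(\rho+x)^\alpha\,d\rho\,dx$; this converges exactly because $\alpha+s<1$, the only delicate point being the corner $x\to1^-,\ \rho\to0^+$, controlled by $\int^1(1-x)^{-s-\alpha}\,dx<\infty$. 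For part (ii) the same computation with $k^{-s}$ replaced by $k^{-s-1}$ gives $W^\alpha w_n(v)=-\sum_{l<n}k^s(l)\,D^\alpha k^{-s-1}(v-l)$ for the coefficients $w_n$ of $(1-z)^{s+1}R_n$, with $|D^\alpha k^{-s-1}(r)|\sim C\,r^{-s-\alpha-2}$; the analogous rescaled estimate now carries an extra factor $n^{-1}$ off the diagonal, while the surviving contribution comes from the near-diagonal block $v\approx l\approx n$ and is $O(n^{s+\alpha-1})$, which tends to $0$ precisely because $\alpha+s<1$.

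The main obstacle is exactly this near-diagonal region $v\approx l\approx n$, where the summands are largest: a crude triangle-inequality bound separating the positive diagonal term $k^s(v)D^\alpha k^{-s}(0)k^{\alpha+1}(v)$ from the negative off-diagonal ones diverges, so the cancellation encoded in $\mathfrak{g}^0_n(1)=0$ and a sharp exponent count must be retained. The hypothesis $\alpha<1-s$ is indispensable here: it is the borderline separating a convergent (uniformly bounded, resp.\ vanishing) estimate from a divergent one. The remaining, routine, points are the justification that $W^\alpha=D^\alpha$ and the interchanges of summation are legitimate on these sequences, which lie in $\tau^\alpha$ so that \eqref{int_rep}, Lemma \ref{Unicity} and the cited asymptotics all apply, and the symmetric treatment of the transient range $0\le v<n$ via $D^sk^{-\alpha}$.
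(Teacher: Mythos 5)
Your part (i) is essentially the paper's own proof: the same sign pattern for $W^\alpha g^0_n$ (nonnegative on $0\le v<n$, nonpositive on $v\ge n$, the former coming from $D^{s}k^{-\alpha}$-type terms and using $0<\alpha<1$), the same use of $\mathfrak{g}^0_n(1)=0$ to write the norm as twice a single sign-definite block, and the same exponent count showing that block is $O(1)$ exactly when $s+\alpha<1$. The only cosmetic difference is that you estimate the tail block through $|D^\alpha k^{-s}(r)|\sim C r^{-s-\alpha-1}$ and a scaling/integral comparison, whereas the paper estimates the head block through $D^{s}k^{-\alpha}$ and the Gamma-function identities of \cite{AM}, ending with the same one-dimensional integral $\int_n^\infty u^{-1}(u-n)^{-s-\alpha}\,du$; the two computations are equivalent because the zero-sum identity makes the two blocks equal in absolute value.

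Part (ii) is where you genuinely diverge. The paper never touches $(1-z)^{s+1}R_n$ directly: it writes $(1-z)\mathfrak{g}^0_n=(1-z)\mathfrak{g}_n-\mathfrak{s}_n$, where $\mathfrak{g}_n$ is the fractional-difference approximate identity of Section \ref{admisi}, invokes Theorem \ref{BAI} (iii) for the first term, and kills $\mathfrak{s}_n$ by splitting $\mathfrak{f}\mathfrak{s}_n=\mathfrak{s}^1_n+\mathfrak{s}^2_n+\mathfrak{s}^3_n$ and evaluating each piece in closed form via convolution identities (e.g.\ $W^{-1}k^{-\alpha}(n)=-k^{1-\alpha}(n-1)$), so no hard asymptotic estimate is needed there. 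Your direct route, proving $\Vert(1-z)^{s+1}R_n\Vert_{A^{\alpha}(\D)}=O(n^{s+\alpha-1})$, is viable and more self-contained, but one point you defer is not ``routine'': your formula $W^\alpha w_n(v)=-\sum_{l<n}k^s(l)D^\alpha k^{-s-1}(v-l)$ holds only for $v\ge n$, while on the transient range $0\le v<n$ one has instead $W^\alpha w_n(v)=\sum_{u\ge n}k^{-\alpha}(u-v)\,w_n(u)$, a genuinely different (triple) sum. Moreover, unlike part (i), you cannot dispose of this block by the sign-plus-vanishing-at-$1$ trick: $D^\alpha k^{-s-1}(r)$ changes sign at $r=1$, so the two blocks of $w_n$ are not sign-definite in the clean way those of $g^0_n$ are. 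That block does close — inserting $|k^{-\alpha}(u-v)|\sim C(u-v)^{-\alpha-1}$ and $|k^{-s-1}(u-l)|\sim C(u-l)^{-s-2}$ and summing carefully (splitting at $u-v\lessgtr u-l$ and at $l\lessgtr n/2$) again yields $O(n^{s+\alpha-1})$ — but it is a computation of the same size as your tail estimate, not symmetric bookkeeping, and a complete write-up must carry it out.
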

\begin{proof}
(i) First of all, note that $g_{n}^0(j)=\sum_{l=0}^{n-1}k^s(l)k^{-s}(j-l)<0,$ for $j\geq n.$ Then by \eqref{eq6.1} we have
$D^{\alpha}g_{n}^0(0)=1+\sum_{l=n}^{\infty}k^{-\alpha}(l)g_{n}^0(l)>0$, 
and 
$D^{\alpha}g_{n}^0(j)=\sum_{l=n}^{\infty}k^{-\alpha}(l-j)g_{n}^0(l)>0$ for $1\leq j\leq n-1$.
Also, if $j\geq n$, by \eqref{eq6.1} and the $\alpha$-admisibility of $\mathfrak{k}^s$ we obtain
$D^{\alpha}g_{n}^0(j)
=\sum_{u=0}^{n-1}k^s(u)\sum_{l=j}^{\infty}k^{-\alpha}(l-j)k^{-s}(j-u)
=\sum_{u=0}^{n-1}k^s(u)D^{\alpha}k^{-s}(j-u)<0.$
Secondly, by \eqref{int_rep} one gets 
$\mathfrak{g}^{0}_n(1)=\displaystyle\sum_{j=0}^{\infty}D^{\alpha}g_{n}^0(j)k^{\alpha+1}(j)=\frac{1}{\mathfrak{k^{s}}(1)}\sum_{j=0}^{n-1} k^{s}(j)=0$  since $\mathfrak{g}^{0}_n\in A^{\alpha}(\D)$.

Therefore,
\begin{eqnarray*}
\Vert \mathfrak{g}^{0}_n\Vert_{A^{\alpha}(\D)}&=&\displaystyle\sum_{j=0}^{\infty}|D^{\alpha}g_{n}^0(j)|k^{\alpha+1}(j)
=\displaystyle\sum_{j=0}^{n-1}D^{\alpha}g_{n}^0(j)k^{\alpha+1}(j)-\sum_{j=n}^{\infty}D^{\alpha}g_{n}^0(j)k^{\alpha+1}(j)\\
&=&2\displaystyle\sum_{j=0}^{n-1}D^{\alpha}g_{n}^0(j)k^{\alpha+1}(j)-\mathfrak{g}^{0}_n(1)
=2+2\displaystyle\sum_{j=0}^{n-1}k^{\alpha+1}(j)\sum_{l=n}^{\infty}k^{-\alpha}(l-j)g_{n}^0(l)\\
&=&2-2\displaystyle\sum_{j=0}^{n-1}k^{\alpha+1}(j)\sum_{l=n}^{\infty}k^{-\alpha}(l-j)\sum_{u=n}^{l}k^s(u)k^{-s}(l-u)\\
&=&2-2\displaystyle\sum_{j=0}^{n-1}k^{\alpha+1}(j)\sum_{u=n}^{\infty}k^s(u)D^{s}k^{-\alpha}(j-u)\\
&=&2-2\frac{\Gamma(1+\alpha+s)}{\Gamma(1+\alpha)\Gamma(-\alpha)}\displaystyle\sum_{u=n}^{\infty}k^s(u)\sum_{j=0}^{n-1}k^{\alpha+1}(j)\frac{\Gamma(-\alpha+u-j)}{\Gamma(s+1+u-j)}.
\end{eqnarray*}

It is readily seen that \cite[Lemma 1.2]{AM} and \cite[Th. 1.3]{AM} are valid for values of $\alpha<0$.
Therefore, applying  \cite[Th. 1.3]{AM} we have, for $u\ge n$,
$$
k^s(u)\sum_{j=0}^{n-1}k^{\alpha+1}(j)\frac{\Gamma(-\alpha+u-j)}{\Gamma(s+1+u-j)}
=\sum_{u=n}^{\infty}\frac{\Gamma(u-n+1-\alpha)}{(u+s)\Gamma(u-n+1+s)}\sum_{j=0}^{n-1}k^{\alpha+1}(j)k^s(j+u-n+1).
$$

Hence, using \cite[Eq.(1)]{ET} and that $k^s$ is decreasing for $0<s<1$, one has 
\begin{eqnarray*}
\Vert \mathfrak{g}^{0}_n\Vert_{A^{\alpha}(\D)}
&=&2-2\frac{\Gamma(1+\alpha+s)}{\Gamma(1+\alpha)\Gamma(-\alpha)}\sum_{j=0}^{n-1}k^{\alpha+1}(j)
\sum_{v=0}^{\infty}\frac{\Gamma(v+1-\alpha)}{(v+n+s)\Gamma(v+1+s)}k^s(j+v+1)\\
&\le&
2+C_{\alpha,s}\sum_{j=0}^{n-1}
\sum_{v=0}^{\infty}\frac{k^{\alpha+1}(j) k^s(j+1)}{(v+n+s)(v+s)^{s+\alpha}}
\le2+C'_{\alpha,s}\sum_{j=0}^{n-1}
\sum_{v=0}^{\infty}\frac{k^{\alpha+s}(j)}{(v+n+s)(v+s)^{s+\alpha}}\\
&=&2+C'_{\alpha,s}k^{\alpha+s+1}(n-1)\sum_{v=0}^{\infty}\frac{1}{(v+n+s)(v+s)^{s+\alpha}}\\
&=&2+C'_{\alpha,s}k^{\alpha+s+1}(n-1)\left(\frac{1}{(n+s)s^{s+\alpha}}+\int_n^{\infty}\frac{du}{u(u-n)^{s+\alpha}}\right)\\
&\leq&2+\frac{C''_{\alpha,s}}{n^{s+\alpha}}\frac{k^{\alpha+s+1}(n-1)}{n^{s+\alpha}}\leq M
\end{eqnarray*}
for some constants $C_{\alpha,s}>0$, $C'_{\alpha,s}>0$, $M>0$.

(ii) Let $n\in\N$ and $\vert z\vert<1$. Recall the notation $\mathfrak{g}_n(z)
=  \frac{1}{\mathfrak{f}(z)}
\sum_{l=0}^{n-1}D^\alpha f(l)\Delta^{-\alpha}\mathcal{Z}(l)$ in Section \ref{admisi} and put
$$
\mathfrak{s}_n(z)=\frac{(1-z)}{\mathfrak{f}(z)}\sum_{l=0}^{n-1}\Delta^{-\alpha}\mathcal{Z}(l)\sum_{j=n}^{\infty}k^{-\alpha}(j-l) f(j).
$$
Then we can write $(1-z)\mathfrak{g}^0_n(z)=(1-z)\mathfrak{g}_n(z)-\mathfrak{s}_n(z)$. Namely,
\begin{eqnarray*}
(1-z)\mathfrak{g}^0_n(z)&=&
\frac{(1-z)}{\mathfrak{f}(z)}\sum_{j=0}^{n-1} f(j)(k^{-\alpha}*\Delta^{-\alpha}\mathcal{Z})(j)
=\frac{(1-z)}{\mathfrak{f}(z)}\sum_{l=0}^{n-1}\Delta^{-\alpha}\mathcal{Z}(l)\sum_{j=l}^{n-1}k^{-\alpha}(j-l) f(j)\\
&=&(1-z)\mathfrak{g}_n(z)
-\frac{(1-z)}{\mathfrak{f}(z)}\sum_{l=0}^{n-1}\Delta^{-\alpha}\mathcal{Z}(l)\sum_{j=n}^{\infty}k^{-\alpha}(j-l) f(j)
=(1-z)\mathfrak{g}_n(z)-\mathfrak{s}_n(z).
\end{eqnarray*}
and one has $\lim_{n\to\infty}(1-z)\mathfrak{g}^0_n=1-z$ in ${A^{\alpha}(\D)}$ if and only if
$\lim_{n\to\infty}\Vert\mathfrak{s}_n\Vert_{A^{\alpha}(\D)}=0$, by Theorem \ref{BAI} (iii). To show that the second preceding limit is zero we proceed as follows.

Since $z\Delta^{-\alpha}\mathcal{Z}(l)=\Delta^{-\alpha}\mathcal{Z}(l+1)-k^{\alpha}(l+1)$
and $Dk^{-\alpha}(j)=-k^{-\alpha-1}(j+1)$ one gets
\begin{eqnarray*}
\mathfrak{f}(z)\mathfrak{s}_n(z)&=&\sum_{l=0}^{n-1}\Delta^{-\alpha}\mathcal{Z}(l)
\sum_{j=n}^{\infty}k^{-\alpha}(j-l)f(j)-\sum_{l=1}^{n}\Delta^{-\alpha}\mathcal{Z}(l)
\sum_{j=n}^{\infty}k^{-\alpha}(j-l+1)f(j)\\
&&+\sum_{l=0}^{n-1}k^{\alpha}(l+1)\sum_{j=n}^{\infty}k^{-\alpha}(j-l)f(j)\\
&=&\biggl(\sum_{j=n}^{\infty}k^{-\alpha}(j)f(j)+
\sum_{l=1}^{n}k^{\alpha}(l)\sum_{j=n}^{\infty}k^{-\alpha}(j-l+1)f(j)\biggr)\\
&&-\sum_{l=1}^{n}\Delta^{-\alpha}\mathcal{Z}(l)
\sum_{j=n}^{\infty}k^{-\alpha-1}(j-l+1)f(j)-\Delta^{-\alpha}\mathcal{Z}(n)\sum_{j=n}^{\infty}k^{-\alpha}(j-n+1)f(j)\\
&=&\mathfrak{s}^1_n(z)+\mathfrak{s}^2_n(z)+\mathfrak{s}^3_n(z),
\end{eqnarray*}
where the meaning of $\mathfrak{s}^1_n(z),\mathfrak{s}^2_n(z),\mathfrak{s}^3_n(z)$ is clear.
By (\ref{int_rep}), \eqref{deltaestimate2} and Lemma \ref{Unicity} we have
\begin{eqnarray*}
\|\mathfrak{s}^1_n\|_{A^{\alpha}(\D)}&=& \vert\sum_{j=n}^{\infty}k^{-\alpha}(j)f(j)+\sum_{l=1}^{n}k^{\alpha}(l)\sum_{j=n}^{\infty}k^{-\alpha}(j-l+1)f(j)\vert\\
&\leq & |\sum_{j=n}^{\infty}k^{-\alpha}(j)f(j)|+|\sum_{l=1}^{n}k^{\alpha}(l)\sum_{j=n}^{\infty}k^{-\alpha}(j-l+1)f(j)|\\
&\leq&-f(n)W^{-1}k^{-\alpha}(n)-f(n)\sum_{l=1}^{n}k^{\alpha}(l)W^{-1}k^{-\alpha}(n-l+1)\\
&=&k^{s}(n)k^{1-\alpha}(n-1)+k^s(n)\sum_{l=1}^{n}k^{\alpha}(l)k^{1-\alpha}(n-l)\\
&=&k^s(n)k^{1-\alpha}(n-1)+k^s(n)(k^1(n)-k^{1-\alpha}(n))\to 0,\quad \hbox{ as } n\to\infty.
\end{eqnarray*}
since, within the above sums, $f(j)>0$, $k^{\alpha}(l)>0$ and $k^{-\alpha}(j),k^{-\alpha}(j-l+1)<0$ for 
$j\geq n$ and $1\leq l\leq n,$ and $f(j)$ is decreasing. 

By a similar argument,
\begin{eqnarray*}
\|\mathfrak{s}^2_n\|_{A^{\alpha}(\D)}
&\leq&f(n)\sum_{l=1}^{n}k^{\alpha+1 }(l)\sum_{j=n}^{\infty}k^{-\alpha-1}(j-l+1)\\
&=&f(n)\sum_{l=1}^{n}k^{\alpha+1 }(l)W^{-1}k^{-\alpha-1}(n-l+1)
=-f(n)\sum_{l=1}^{n}k^{\alpha+1 }(l)k^{-\alpha}(n-l)\\
&=&-k^s(n)(k^1(n)-k^{\alpha+1}(n)-k^{-\alpha}(n))\to 0,\quad \hbox{ as } n\to\infty.
\end{eqnarray*}

Finally, using \eqref{2.10} in particular,

\begin{eqnarray*}
\|\mathfrak{s}^3_n\|_{A^{\alpha}(\D)}&= &k^{\alpha+1 }(n)|\sum_{j=n}^{\infty}k^{-\alpha}(j-n+1)f(j)|\\
&=&k^{\alpha+1 }(n)|D^{\alpha}f(n-1)-f(n-1)|\to 0,\quad  \hbox{ as } n\to\infty.
\end{eqnarray*}
\end{proof}

\begin{remark}\label{NoBAI}
\normalfont
Let $0<s<1.$ If we suppose that $\alpha\geq 1-s,$ then the sequence of functions $(\mathfrak{g}^{0}_{n})_{n\ge1}$ is not a bounded approximate identity for $\mathfrak{f}(z)=(1-z)^{-s}.$ Indeed, doing like in the proof of Theorem \ref{approximation01} (i), we get
$\displaystyle\Vert \mathfrak{g}^{0}_n\Vert_{A^{1-s}(\D)}=2-
\sum_{j=0}^{n-1}\frac{2k^{2-s}(j)}{\Gamma(2-s)\Gamma(s-1)}
\sum_{v=0}^{\infty}\frac{k^s(j+v+1)}{(v+n+s)(v+s)}$.
Since $k^s$ is decreasing and  $\sum_{j=0}^{n-1}k^{2-s}(j)=k^{3-s}(n-1)\geq c_{s}n^{2-s}$, with $c_{s}>0$, 
one has 
by \cite[Eq.(1)]{ET}
$\Vert \mathfrak{g}^{0}_n\Vert_{A^{1-s}(\D)}\geq 2+C_s \sum_{v=0}^{\infty}H_{n}(v)$,
where $\displaystyle H_{n}(v)=\frac{n^{2-s}}{(v+n+s)^{2-s}(v+s)}\to(v+s)^{-1}$ as $n\to\infty$ in an increasing way. Therefore $\lVert \mathfrak{g}^{0}_n\Vert_{A^{1-s}(\D)}$ is not uniformly bounded in $n$ and so, by \eqref{inclusion},
$\lVert\mathfrak{g}^{0}_n\Vert_{A^{\alpha}(\D)}$ is not uniformly bounded in $n$ for every $\alpha\geq 1-s$.
\end{remark}

The second theorem in this section gives an analog of Theorem \ref{approximation01} for the function $\mathfrak{L}$.

\begin{theorem}\label{approximation02}
Let $\mathfrak{f}=\mathfrak L$ where $\mathfrak L(z)=-z^{-1}\log (1-z)$
and let $\mathfrak{g}^L_{n}$ denote the function $\mathfrak{g}^0_{n}$ corresponding to $\mathfrak L$ as above. Then
\begin{itemize}
\item[(i)]  $\Vert \mathfrak{g}^{L}_n\Vert_{A^{\alpha}(\D)}\le M$ for every $n$ and $0\leq \alpha\leq 1$, with $M>0$ independent of $n$.
\item[(ii)] $\lim_{n\to\infty}(1-z)\mathfrak{g}^L_{n}(z)=1-z\ \text{in }A^{\alpha}(\D)$ for $0\leq \alpha < 1$.
\end{itemize}
\end{theorem}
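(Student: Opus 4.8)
The plan is to treat the two parts separately: to reduce the uniform bound in part (i) to the single value \(\alpha=1\) (where the Ces\`aro number \(k^{-1}\) has finite support), and to obtain the approximate-identity property in part (ii) by the same three-term decomposition used for \(\mathfrak{k}^s\) in Theorem \ref{approximation01}(ii).

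For part (i), I would first record, exactly as at the opening of Theorem \ref{approximation01}(i), the sign pattern produced by the \(\alpha\)-admissibility of \(\mathfrak{L}\) (Example \ref{Ex64}, via Theorem \ref{logconvex}): writing \(1/\mathfrak{L}(z)=\sum_j g(j)z^j\) one has \(g(0)=1\) and \(g(j)<0\) for \(j\ge1\), and from \eqref{eq6.1} together with \(D^{\alpha}g^0_n(j)=\sum_{u=0}^{n-1}L(u)D^{\alpha}g(j-u)\) for \(j\ge n\) one gets \(D^{\alpha}g^0_n(j)\ge0\) on \(0\le j\le n-1\) and \(D^{\alpha}g^0_n(j)\le0\) on \(j\ge n\). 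Since \(\mathfrak{L}(1)=\sum_j L(j)=\infty\) we have \(\mathfrak{g}^L_n(1)=0\), so \(\Vert\mathfrak{g}^L_n\Vert_{A^{\alpha}(\D)}=2\sum_{j=0}^{n-1}D^{\alpha}g^0_n(j)k^{\alpha+1}(j)\). The decisive simplification is at \(\alpha=1\): by \eqref{binom} the sequence \(k^{-1}\) is carried by \(\{0,1\}\), so \(D^1g^0_n(j)=g^0_n(j)-g^0_n(j+1)\) vanishes except at \(j=0\) and \(j=n-1\); using \((L\ast g)(n)=\delta_0(n)=0\) to evaluate \(g^0_n(n)=-L(n)g(0)=-1/(n+1)\), and \(k^2(j)=j+1\), this collapses to \(\Vert\mathfrak{g}^L_n\Vert_{A^{1}(\D)}=2\bigl(1+\tfrac{n}{n+1}\bigr)\le4\). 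Finally the continuous inclusions \eqref{inclusion}, i.e.\ \(\tau^1\hookrightarrow\tau^{\alpha}\hookrightarrow\ell^1\) (and \(A^0(\D)=\ell^1\)), transfer this uniform bound to every \(\alpha\in[0,1)\), settling part (i) on all of \([0,1]\).

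For part (ii) I would follow the scheme of Theorem \ref{approximation01}(ii) line by line, with \(L\) replacing \(k^s\). Writing \((1-z)\mathfrak{g}^L_n(z)=(1-z)\mathfrak{g}_n(z)-\mathfrak{s}_n(z)\), where \(\mathfrak{g}_n\) is the fractional-difference approximant and \(\mathfrak{s}_n(z)=\tfrac{1-z}{\mathfrak{L}(z)}\sum_{l=0}^{n-1}\Delta^{-\alpha}\mathcal{Z}(l)\sum_{j\ge n}k^{-\alpha}(j-l)L(j)\), the first summand already tends to \(1-z\) in \(A^{\alpha}(\D)\) by Theorem \ref{BAI}(iii), whose hypotheses \(\mathfrak{L}\) verifies (Example \ref{Ex64}). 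It therefore suffices to show \(\Vert\mathfrak{s}_n\Vert_{A^{\alpha}(\D)}\to0\) for \(0\le\alpha<1\). Using \(z\Delta^{-\alpha}\mathcal{Z}(l)=\Delta^{-\alpha}\mathcal{Z}(l+1)-k^{\alpha}(l+1)\) and \(Dk^{-\alpha}(j)=-k^{-\alpha-1}(j+1)\), I would split \(\mathfrak{L}\,\mathfrak{s}_n=\mathfrak{s}^1_n+\mathfrak{s}^2_n+\mathfrak{s}^3_n\) just as before, estimate \(\Vert\mathfrak{s}_n\Vert_{A^{\alpha}}\le M_{\alpha}\Vert 1/\mathfrak{L}\Vert_{A^{\alpha}}\bigl(\Vert\mathfrak{s}^1_n\Vert_{A^{\alpha}}+\Vert\mathfrak{s}^2_n\Vert_{A^{\alpha}}+\Vert\mathfrak{s}^3_n\Vert_{A^{\alpha}}\bigr)\) with \(\Vert 1/\mathfrak{L}\Vert_{A^{\alpha}}<\infty\) finite by Proposition \ref{inversein}, and check that each piece vanishes using that \(L\) is decreasing with \(L(n)\sim n^{-1}\) and the asymptotics \eqref{2.12}--\eqref{2.13} for \(D^{\alpha}L\) (together with \eqref{double2} for the Ces\`aro numbers).

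The main obstacle is not the non-explicit inverse coefficients \(g\): these are avoided entirely, since the numerators \(\mathfrak{s}^1_n,\mathfrak{s}^2_n,\mathfrak{s}^3_n\) depend only on \(L\) and on Ces\`aro numbers, and the factor \(1/\mathfrak{L}\) is absorbed through its finite \(A^{\alpha}\)-norm. The delicate point is rather the asymptotic bookkeeping of the two boundary pieces \(\mathfrak{s}^2_n\) and \(\mathfrak{s}^3_n\): after simplification each carries a factor of order \(k^{\alpha+1}(n)L(n)\sim n^{\alpha-1}\), which tends to \(0\) precisely for \(\alpha<1\) and merely stabilises at \(\alpha=1\). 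Thus the estimates must be sharp enough to display this threshold, and the contrast between parts (i) and (ii) is exactly the expected one, in analogy with Remark \ref{NoBAI}: \(\alpha=1\) still gives a uniformly bounded family but no longer an approximate identity.
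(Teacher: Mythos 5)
Your proposal is correct and follows essentially the same route as the paper: part (i) by the explicit computation at $\alpha=1$ (where \eqref{eq6.1} and $L\ast g=\delta_0$ collapse the sum to the two terms $j=0$ and $j=n-1$, giving $2+2\tfrac{n}{n+1}$) followed by the inclusions \eqref{inclusion}, and part (ii) by reusing the decomposition $(1-z)\mathfrak{g}^L_n=(1-z)\mathfrak{g}_n-\mathfrak{s}_n$ with the three-piece split of $\mathfrak{s}_n$ from Theorem \ref{approximation01}(ii) and Theorem \ref{BAI}(iii). The only trivial slips are that $Dg^L_n(j)$ vanishes only for $1\le j\le n-2$ (not for $j\ge n$, though this does not affect the restricted sum you actually use) and that the relevant remark for the $\alpha=1$ threshold of $\mathfrak{L}$ is Remark \ref{NOaiLOG} rather than Remark \ref{NoBAI}.
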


\begin{proof}
(i) We have $Dg_{n}^L(0)=1$,
$Dg_{n}^L(j)=0$ for $1\leq j\leq n-2$, and $Dg_{n}^L(n-1)=-g_{n}^L(n)=-(n+1)^{-1}$ by \eqref{eq6.1}. On the other hand,
for $j\geq n$ we have $g_{n}^L(j)=\sum_{l=0}^{n-1}f(l)g(j-l)<0$ and
the $1$-admisibility of $\mathfrak{f}$ implies
$Dg_{n}^L(j)=g_{n}^L(j)-g_{n}^L(j+1)=\sum_{l=0}^{n-1}f(j)Dg(j-l)<0$.

Since $\mathfrak{g}^{L}_n\in A^{1}(\D)$, by \eqref{int_rep} one gets
$\mathfrak{g}^{L}_n(1)=\displaystyle\sum_{j=0}^{\infty}Dg_{n}^L(j)k^{2}(j)
=\frac{1}{\mathfrak{f}(1)}\sum_{j=0}^{n-1}\frac{1}{j+1}=0$ and then 
\begin{eqnarray*}
\Vert \mathfrak{g}^{L}_n\Vert_{A^{1}(\D)}&=&\displaystyle\sum_{j=0}^{\infty}|Dg_{n}^L(j)|k^{2}(j)
=\displaystyle\sum_{j=0}^{n-1}Dg_{n}^L(j)k^{2}(j)-\sum_{j=n}^{\infty}Dg_{n}^L(j)k^{2}(j)\\
&=&2\displaystyle\sum_{j=0}^{n-1}Dg_{n}^L(j)k^{2}(j)-\mathfrak{g}^{L}_n(1)=2+2\frac{n}{n+1}\leq M,
\end{eqnarray*}
with $M>0$ independent of $n$. Finally,
the inclusions given in \eqref{inclusion} imply the result.

(ii) Write $(1-z)\mathfrak{g}^L_n(z)=(1-z)\mathfrak{g}_n(z)-\mathfrak{s}_n(z)$
where $\mathfrak{g}_n$ and
$\mathfrak{s}_n=\displaystyle\frac{1}{\mathfrak{f}(z)}(\mathfrak{s}_n^1+\mathfrak{s}_n^2+\mathfrak{s}_n^3)$
have the same meaning as in the proof of Theorem \ref{approximation01}. Then, as in that theorem, it is enough to prove that
$\|\mathfrak{s}_n\|_{A^{\alpha}(\D)}\to 0$ as $n\to\infty$ to arrive at the conclusion.
To show this, note that
\begin{eqnarray*}
\|\mathfrak{s}^1_n\|_{A^{\alpha}(\D)}&\leq&
-f(n)W^{-1}k^{-\alpha}(n)-f(n)\sum_{l=1}^{n}k^{\alpha}(l)W^{-1}k^{-\alpha}(n-l+1)\\
&=&\frac{k^{1-\alpha}(n-1)}{n+1}+\frac{1}{n+1}\sum_{l=1}^{n}k^{\alpha}(l)k^{1-\alpha}(n-l)\\
&=&\frac{k^{1-\alpha}(n-1)}{n+1}+\frac{1}{n+1}(k^1(n)-k^{1-\alpha}(n))\to 0, \hbox{ as } n\to\infty,
\end{eqnarray*}

\begin{eqnarray*}
\|\mathfrak{s}^2_n\|_{A^{\alpha}(\D)}&\leq&-f(n)\sum_{l=1}^{n}k^{\alpha+1 }(l)k^{-\alpha}(n-l)\\
&=&-\frac{1}{n+1}(k^1(n)-k^{\alpha+1}(n)-k^{-\alpha}(n))\to 0, \hbox{ as } n\to\infty,
\end{eqnarray*}

and by Example \ref{ex22} (iii),

\begin{eqnarray*}
\|\mathfrak{s}^3_n\|_{A^{\alpha}(\D)}&= &k^{\alpha+1 }(n)|\sum_{j=n}^{\infty}k^{-\alpha}(j-n+1)f(j)|\\
&=&\frac{1}{n+1}|D^{\alpha}f(n-1)-f(n-1)|\to 0, \hbox{ as }  n\to\infty.
\end{eqnarray*}
\end{proof}

\begin{remark}\label{NOaiLOG}
\normalfont
Let $\alpha=1$ and write $(1-z)\mathfrak{g}^{L}_{n}(z)=1-z+g_n^L(n)-\sum_{j=n+1}^{\infty}Dg_n^L(j-1)z^j$ 
so that
$\displaystyle\|(1-z)\mathfrak{g}^{L}_{n}(z)-(1-z)\|_{A^{1}(\D)}\geq |g_n^L(n)|k^2(n-1)=\frac{n}{n+1}$. This implies that the sequence $(\mathfrak{g}^{L}_{n})_{n\ge1}$ is not an approximate identity for
$\mathfrak{L}(z)=-\displaystyle\frac{\log (1-z)}{z}$.
\end{remark}

\section{Domains of operatorial functions}\label{DOF}
\setcounter{theorem}{0}
\setcounter{equation}{0}

In this section, we characterize the domain of operators $\mathfrak{f}(T)$, given by the functional calculus associated 
with a $(C,\alpha)$-bounded operator $T$ and $\alpha$-admissible functions $\mathfrak{f}$ on $\D$, by 
transferring to operators the results of Section \ref{admisi}. The results here generalize to fractional $\alpha$ those of \cite{HT}. Part of proofs mimic those of \cite{HT}, but even in these cases we include them, for convenience of readers.

\begin{proposition}\label{prop7.1}  Let $\mathfrak{f}$ be an $\alpha$-admissible function, and $(\mathfrak{g}_n)_{n\geq 1}$ be defined as in Section 6. Let $T$ be a
$(C,\alpha)$-bounded operator on $X$ with $\hbox{Ker}(I-T)=\{0\}$.
\begin{itemize}
\item[(i)] If $\mathfrak{f}(1)<\infty$ then
$\lim_{n\to\infty}\mathfrak{g}_{n}(T)=I$ in the operator norm.
\item[(ii)] If $\mathfrak{f}(1)=\infty$ then
$\Vert \mathfrak{g}_n(T)\Vert\le 2K_{\alpha}(T)$, $n\geq 1$.
\item[(iii)]   If $\mathfrak{f}(1)=\infty,$ $(1-z)\mathfrak{f}(z)\in A^{\alpha}(\D)$ and $D^{\alpha}f(j)j^{\alpha}\to 0$ as
$j\to\infty,$ then
$$
\lim_{n\to\infty}(I-T)\mathfrak{g}_{n}(T)=I-T\quad \text{in the operator norm}.
$$
\item[(iv)] If $\mathfrak{f}(1)=\infty,$ $(1-z)\mathfrak{f}(z)\in A^{\alpha}(\D)$ and $x\in X$ is such that
$$
D^{\alpha}f(j-1)\Delta^{-\alpha}\mathcal{T}(j)\mathfrak{f}(T)^{-1}x\to 0 \hbox{ as } j\to\infty,
$$
then
$$
\lim_{n\to\infty}(I-T)\mathfrak{g}_{n}(T)x=(I-T)x\quad \text{in norm}.
$$
\end{itemize}
\end{proposition}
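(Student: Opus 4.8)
The plan is to transfer the approximation statements of Theorem \ref{BAI} from the algebra $A^\alpha(\D)$ to $\mathcal{B}(X)$ through the bounded homomorphism $\Phi_\alpha$ and the estimate \eqref{ine}, namely $\lVert\Phi_\alpha(\mathfrak u)\rVert\le K_\alpha(T)\lVert\mathfrak u\rVert_{A^\alpha(\D)}$. Since $\mathfrak f$ is $\alpha$-admissible, Proposition \ref{inversein} gives $1/\mathfrak f\in A^\alpha(\D)$, so $1/\mathfrak f$ is an $\alpha$-regularizer of $\mathfrak f$; by Definition \ref{a-reg} one then has $\mathfrak f(T)^{-1}=\Phi_\alpha(1/\mathfrak f)$, a bounded operator, and this is the operator appearing in the hypothesis of (iv). I will also repeatedly use that $\Phi_\alpha(1)=I$, that $\Phi_\alpha(1-z)=I-T$ (here $1-z=\mathfrak k^{-1}$), and that $\Phi_\alpha(\Delta^{-\alpha}\mathcal Z(n))=\Delta^{-\alpha}\mathcal T(n)$, the latter following from the representation \eqref{int_rep} and the uniqueness Lemma \ref{Unicity}.

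Parts (i) and (ii) are then immediate. For (i), $\lVert\mathfrak g_n(T)-I\rVert=\lVert\Phi_\alpha(\mathfrak g_n-1)\rVert\le K_\alpha(T)\lVert\mathfrak g_n-1\rVert_{A^\alpha(\D)}\to0$ by Theorem \ref{BAI}(i). For (ii), $\lVert\mathfrak g_n(T)\rVert\le K_\alpha(T)\lVert\mathfrak g_n\rVert_{A^\alpha(\D)}\le2K_\alpha(T)$ by Theorem \ref{BAI}(ii). For (iii), the homomorphism property gives $(I-T)\mathfrak g_n(T)=\Phi_\alpha((1-z)\mathfrak g_n)$, so Theorem \ref{BAI}(iii), which provides $\lVert(1-z)\mathfrak g_n-(1-z)\rVert_{A^\alpha(\D)}\to0$, yields $(I-T)\mathfrak g_n(T)\to I-T$ in operator norm through \eqref{ine}.

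Part (iv) is the delicate point, since the uniform decay hypothesis $D^\alpha f(j)j^\alpha\to0$ of (iii) is now replaced by the weaker, vector-dependent condition. The idea is to reuse the splitting $(1-z)\mathfrak g_n=\mathfrak h_n-\mathfrak r_n$ of \eqref{4.2}, with $\mathfrak r_n(z)=\frac{1}{\mathfrak f(z)}D^\alpha f(n-1)\Delta^{-\alpha}\mathcal Z(n)$. The half of claim \eqref{hache} asserting $\lVert\mathfrak h_n-(1-z)\rVert_{A^\alpha(\D)}\to0$ was proved in Theorem \ref{BAI}(iii) \emph{without} invoking the decay hypothesis; hence $\mathfrak h_n(T)\to I-T$ in operator norm, so $\mathfrak h_n(T)x\to(I-T)x$ for the given $x$. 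For the remainder, since $\mathfrak r_n\in A^\alpha(\D)$ and all factors are functions of $T$ and thus commute, the homomorphism property identifies $\mathfrak r_n(T)x=D^\alpha f(n-1)\Delta^{-\alpha}\mathcal T(n)\mathfrak f(T)^{-1}x$. This is precisely the sequence assumed to converge to $0$ in the hypothesis of (iv) (taking $j=n$), whence $\mathfrak r_n(T)x\to0$. Writing $(I-T)\mathfrak g_n(T)x=\mathfrak h_n(T)x-\mathfrak r_n(T)x$ and letting $n\to\infty$ gives the claim.

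The main obstacle will be the clean identification of $\mathfrak r_n(T)x$: one must ensure that $\mathfrak r_n\in A^\alpha(\D)$ (already recorded in the proof of Theorem \ref{BAI}(iii)) so that $\Phi_\alpha(\mathfrak r_n)$ is legitimate, and that $\mathfrak f(T)^{-1}=\Phi_\alpha(1/\mathfrak f)$ commutes with $\Delta^{-\alpha}\mathcal T(n)$, which is automatic from the homomorphism property. Equally important is to separate the two halves of \eqref{hache} and to quote only the $\mathfrak h_n$-estimate, since the $\mathfrak r_n$-estimate there genuinely required the global decay condition that is unavailable in (iv).
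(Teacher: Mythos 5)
Your proposal is correct and is essentially the paper's own proof: parts (i)--(iii) are obtained by transferring Theorem \ref{BAI} to $\mathcal{B}(X)$ through the bound \eqref{ine}, and part (iv) is exactly the paper's identity \eqref{54}, obtained by applying $\Phi_\alpha$ to the splitting \eqref{4.2}, with $\mathfrak{h}_n(T)\to I-T$ in operator norm and the remainder term $D^{\alpha}f(n-1)\Delta^{-\alpha}\mathcal{T}(n)\mathfrak{f}(T)^{-1}x$ killed by the hypothesis. Your explicit remark that only the $\mathfrak{h}_n$-half of \eqref{hache} may be quoted in (iv) --- since it needs $(1-z)\mathfrak{f}\in A^{\alpha}(\D)$ but not the decay condition $D^{\alpha}f(j)j^{\alpha}\to 0$ --- is precisely what the paper uses, albeit tacitly when it cites \eqref{hache}.
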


\begin{proof} Assertions (i), (ii) and (iii) are straightforward consequences of Theorem \ref{BAI}
and the estimate \eqref{ine} involving the $\Phi_\alpha$-functional calculus for $T$.
As regards part (iv), applying the functional calculus $\Phi_\alpha$ to \eqref{4.2} and \eqref{hache} we have
\begin{equation}\label{54}
(I-T)\mathfrak{g}_{n}(T)
=\mathfrak{h}_n(T)-D^{\alpha}f(n-1)\Delta^{-\alpha}\mathcal{T}(n)\mathfrak{f}(T)^{-1},
\end{equation}
and $\lim_{n\to\infty}\mathfrak{h}_n(T)=(I-T)$ in the operator norm. Then the result follows from the hypothesis on the second term in \eqref{54}.
\end{proof}

\begin{corollary}\label{Cor8.5} Let $\mathfrak{f}$ be an $\alpha$-admissible function such that
$(1-z)\mathfrak{f}(z)\in A^{\alpha}(\D)$, and let $T$ be a $(C,\alpha)$-bounded operator on $X$ with
$\rm{Ker}(I-T)=\{0\}.$ Then
$$
\lim_{n\to \infty}\mathfrak{g}_{n}(T)x=x, \quad x\in \overline{\rm{ Ran }}(I-T),
$$
if and only if
$$\lim_{n\to\infty}D^{\alpha}f(n-1)\Delta^{-\alpha}\mathcal{T}(n)w=0, \quad w\in \overline{\rm{ Ran }}(I-T).$$
\end{corollary}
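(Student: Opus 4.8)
The plan is to hinge everything on the operator identity \eqref{54} obtained in the proof of Proposition \ref{prop7.1}, namely
$$
(I-T)\mathfrak{g}_{n}(T)=\mathfrak{h}_n(T)-R_nS,\qquad R_n:=D^{\alpha}f(n-1)\Delta^{-\alpha}\mathcal{T}(n),\quad S:=\mathfrak{f}(T)^{-1}=(1/\mathfrak{f})(T),
$$
where $\mathfrak{h}_n(T)\to I-T$ in operator norm; the underlying convergence $\mathfrak{h}_n\to 1-z$ in $A^{\alpha}(\D)$ proved within \eqref{hache} uses only $(1-z)\mathfrak{f}\in A^{\alpha}(\D)$, so it is available here. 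I would first discard the case $\mathfrak{f}(1)<\infty$: then $\mathfrak{g}_n(T)\to I$ in norm by Proposition \ref{prop7.1}(i), while $\sum_{j}D^{\alpha}f(j)k^{\alpha+1}(j)=\mathfrak{f}(1)<\infty$ forces $\|R_n\|\to0$, so both displayed limits hold trivially. Thus assume $\mathfrak{f}(1)=\infty$, so $(1/\mathfrak{f})(1)=0$, $1/\mathfrak{f}$ lies in the closed ideal of $A^{\alpha}(\D)$ of functions vanishing at $z=1$, and therefore $SX\subseteq\overline{{\rm{Ran}}}(I-T)=:Y$.

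The core of the plan is to show that both displayed conditions are equivalent to the intermediate statement $(\mathrm C)$: $R_nSx\to0$ for all $x\in X$. By \eqref{54} and $\mathfrak{h}_n(T)\to I-T$, condition $(\mathrm C)$ is the same as $(I-T)\mathfrak{g}_n(T)x\to(I-T)x$ for every $x$ (this is, in essence, Proposition \ref{prop7.1}(iv) together with its converse). The left-hand condition implies $(\mathrm C)$ because $(I-T)x\in{\rm{Ran}}(I-T)\subseteq Y$, whence $(I-T)\mathfrak{g}_n(T)x=\mathfrak{g}_n(T)(I-T)x\to(I-T)x$; conversely $(\mathrm C)$ gives $\mathfrak{g}_n(T)(I-T)a=(I-T)\mathfrak{g}_n(T)a\to(I-T)a$, and the uniform bound $\|\mathfrak{g}_n(T)\|\le 2K_{\alpha}(T)$ from Proposition \ref{prop7.1}(ii), with the density of ${\rm{Ran}}(I-T)$ in $Y$, promotes this to all of $Y$. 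For the right-hand condition: since $SX\subseteq Y$, applying it to $w=Sx$ yields $(\mathrm C)$; conversely, multiplicativity of $\Phi_\alpha$ gives $S\,[(1-z)\mathfrak{f}](T)=I-T$, so each $w=(I-T)a$ equals $S\big([(1-z)\mathfrak{f}](T)a\big)\in SX$, whence $(\mathrm C)$ forces $R_nw\to0$ on ${\rm{Ran}}(I-T)$, and one passes to $Y$ by density.

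Both density passages rest on a uniform bound $\sup_n\|R_n\|<\infty$, and securing it is the one point with no counterpart in the power-bounded case \cite{HT} (there $R_n=f(n-1)T^n$ is bounded merely because $f$ is). I would reduce it to $\sup_n D^{\alpha}f(n-1)k^{\alpha+1}(n)<\infty$, since $\|R_n\|\le K_{\alpha}(T)\,D^{\alpha}f(n-1)k^{\alpha+1}(n)$. Writing $a_n:=D^{\alpha}f(n)$, the admissibility identity $W^{-\alpha}(D^{\alpha}f)=f$ evaluated at $0$ gives $\sum_n a_nk^{\alpha}(n)=f(0)<\infty$, while the expansion of $(1-z)\mathfrak{f}$ recorded in the proof of Theorem \ref{BAI} gives $\sum_n|D^{\alpha+1}f(n)|k^{\alpha+1}(n)<\infty$. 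Using $D^{\alpha+1}f(n)=a_n-a_{n+1}$ and $k^{\alpha+1}(n+1)-k^{\alpha+1}(n)=k^{\alpha}(n+1)$ one computes
$$
a_nk^{\alpha+1}(n)-a_{n+1}k^{\alpha+1}(n+1)=D^{\alpha+1}f(n)k^{\alpha+1}(n)-a_{n+1}k^{\alpha}(n+1),
$$
whose right-hand side is absolutely summable by the two preceding series; hence $a_nk^{\alpha+1}(n)$ converges, and so $D^{\alpha}f(n-1)k^{\alpha+1}(n)$ is bounded (shifting the index). This telescoping estimate is the step I expect to be delicate; the rest is formal bookkeeping with \eqref{54} and the $T$-invariance of $Y$.
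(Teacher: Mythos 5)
Your proof is correct, and in the implication from the second condition to the first it follows a genuinely different route from the paper's. Keep your notation $R_n:=D^{\alpha}f(n-1)\Delta^{-\alpha}\mathcal{T}(n)$ and $S:=\mathfrak{f}(T)^{-1}$. The paper proves that implication through fractional powers: it takes $x=(I-T)^{s}y$ with $0<s<1$, uses Proposition \ref{prop4.8} to place $Sx$ inside $\overline{{\rm{Ran}}}(I-T)$ so that the hypothesis and Proposition \ref{prop7.1}(iv) apply, and then removes the auxiliary power by letting $s\to 0^{+}$ via \eqref{-sto-1} and the uniform bound on $\mathfrak{g}_{n}(T)$. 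You bypass fractional powers entirely: since $\mathfrak{f}(1)=\infty$ gives $(1/\mathfrak{f})(1)=0$, the density of polynomials vanishing at $z=1$ in the ideal of functions vanishing at $z=1$ (stated in the paper just before Theorem \ref{BAI}) yields $SX\subseteq \overline{{\rm{Ran}}}(I-T)$, and the hypothesis then feeds directly into \eqref{54}; neither Proposition \ref{prop4.8} nor \eqref{-sto-1} is needed. In the converse implication the two proofs agree in substance---both rest on \eqref{54} and on ${\rm{Ran}}(I-T)\subseteq SX$---but the paper extends $R_{n}w\to 0$ from ${\rm{Ran}}(I-T)$ to its closure without comment, and that density passage requires exactly the bound $\sup_{n}\Vert R_{n}\Vert<\infty$, equivalently boundedness of $D^{\alpha}f(n-1)k^{\alpha+1}(n)$; this is not automatic, since the corollary does not assume $D^{\alpha}f(j)j^{\alpha}\to 0$. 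Your telescoping argument, which combines $\sum_{n}D^{\alpha}f(n)k^{\alpha}(n)=f(0)$ (admissibility) with $\sum_{n}\vert D^{\alpha+1}f(n)\vert k^{\alpha+1}(n)<\infty$ (read off from the expansion of $(1-z)\mathfrak{f}$ in the proof of Theorem \ref{BAI}) to show that $D^{\alpha}f(n)k^{\alpha+1}(n)$ converges, is correct and supplies precisely this missing estimate; on this point your argument is more complete than the paper's, and your explicit dispatch of the case $\mathfrak{f}(1)<\infty$ also makes explicit something the paper leaves tacit, since its proof invokes Proposition \ref{prop7.1}(ii),(iv), which presuppose $\mathfrak{f}(1)=\infty$. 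One small slip of wording: only the density passage for $R_{n}$ needs the new bound, while the passage for $\mathfrak{g}_{n}(T)$ rests on Proposition \ref{prop7.1}(ii), as you had in fact already invoked.
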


\begin{proof}
First we prove the part ``if''.
Let $y\in X$.
Take $s\in (0,1)$.
By Proposition \ref{prop4.8} we have  $x=(I-T)^s y\in \overline{(I-T)X}$ and then we get
$\displaystyle\lim_{n\to\infty}D^{\alpha} f(n-1)\Delta^{-\alpha}\mathcal{T}(n)\mathfrak{f}(T)^{-1}x=0$ from the hypothesis.
Therefore it follows by Proposition \ref{prop7.1} (iv) that
\begin{equation}\label{71}
\lim_{n\to\infty}\mathfrak{g}_{n}(T)(I-T)^{1+s}y=(I-T)^{1+s}y\quad \text{in  norm}.
\end{equation}
Then the uniform boundedness of $\{ \mathfrak{g}_n(T)\}_{n\in\N}$ given in
Proposition \ref{prop7.1} (ii),
together with \eqref{71} and \eqref{-sto-1} imply
\begin{eqnarray*}
\| \mathfrak{g}_{n}(T)(I-T)^{1}y- (I-T)^{1}y\| &\leq &
\| \mathfrak{g}_{n}(T)\left((I-T)^{1}y- (I-T)^{1+s}y\right)\| \\
&+&\| \mathfrak{g}_{n}(T)(I-T)^{1+s}y- (I-T)^{1+s}y\|\\
&+&\| (I-T)^{1+s}y- (I-T)^{1}y\|\to 0,\quad \text{as }n\to\infty.
\end{eqnarray*}
Hence, we conclude that
$\lim_{n\to \infty}\mathfrak{g}_{n}(T)x=x$, $x\in \overline{\rm{ Ran }}(I-T)$.

Conversely, assume  $\lim_{n\to \infty}\mathfrak{g}_{n}(T)x=x$ for all
$x\in \overline{\rm{ Ran }}(I-T)$.
Then $\lim_{n\to\infty}(I-T)\mathfrak{g}_{n}(T)=(I-T)$ in the operator norm, and by \eqref{54}
one gets
$\lim_{n\to\infty}D^{\alpha}f(n-1)\Delta^{-\alpha}\mathcal{T}(n)\mathfrak{f}(T)^{-1}=0$ strongly.
Since $\mathfrak{f}(T)(I-T)\in\mathcal{B}(X)$ we have ${\rm{ Ran }}(I-T)\subset {\rm{ Dom }}\,\mathfrak{f}(T),$ 
from which one obtains $\lim_{n\to\infty}D^{\alpha}f(n-1)\Delta^{-\alpha}\mathcal{T}(n)w=0,$ for all $w\in \overline{\rm{ Ran }}(I-T)$.
\end{proof}

In the above proposition and corollary, the assumption ${\rm{Ker}}(I-T)=\{0\}$ is of a technical character since we deal with admissible functions which are regularizable through the mapping $z\mapsto 1-z$. 
As usual on results involving ergodicity, a stronger condition is assumed in the sequel. The existence of the projection $P_\beta$ holds true only on the subspace $X_\beta$, (Lemma \ref{dondeconverge}). 
Since ${\rm{Ker}}(I-T)$ is the set of fixed points of $T$ the only significant subspace of $X$ for ergodicity is
$\overline{\rm{ Ran}}(I-T)$, see for instance \cite{HT} in these respects. So we will assume $X=\overline{\rm{ Ran}}(I-T)$. Note that in this case if $T$ is $(C,\alpha)$-bounded then it is $(C,\beta)$-ergodic for all $\beta>\alpha$ (Theorem \ref{MeanErg}).

\begin{theorem}\label{dominios}
Let $\alpha>0$ and let $\mathfrak{f}$ be an $\alpha$-admissible function such that $(1-z)\mathfrak{f}(z)\in A^{\alpha}(\D)$ and $D^{\alpha}f(j)j^{\alpha}\to 0$ as $j\to\infty.$ If $T$ is a $(C,\alpha)$-bounded operator on $X$ with
$\overline{\rm{ Ran }}(I-T)=X,$ the following assertions are equivalent for a given $x$ in $X$:
\begin{itemize}
\item[(i)] $x\in {\rm{ Dom }}\,\mathfrak{f}(T)$.
\item[(ii)] The series $\sum_{j\geq 0}D^{\alpha}f(j)\Delta^{-\alpha}\mathcal{T}(j)x$ converges in norm.
\item[(iii)] The series $\sum_{j\geq 0}D^{\alpha}f(j)\Delta^{-\alpha}\mathcal{T}(j)x$ converges weakly.
\end{itemize}
Furthermore, if \rm{(i)-(iii)} holds true then
$$
\mathfrak{f}(T)x=\sum_{j\geq 0}D^{\alpha}f(j)\Delta^{-\alpha}\mathcal{T}(j)x.
$$
\end{theorem}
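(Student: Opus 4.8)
The plan is to reduce the whole statement to a single operator-norm limit, namely that $(I-T)S_n\to[(1-z)\mathfrak f](T)$, where $S_n:=\sum_{j=0}^{n-1}D^{\alpha}f(j)\Delta^{-\alpha}\mathcal{T}(j)$ are the partial sums appearing in (ii). First I would fix the regularisation. Since $\overline{\rm{Ran}}(I-T)=X$, Lemma \ref{dondeconverge} applied with any $\beta>\alpha$ gives $X_\beta=X$ and ${\rm{Ker}}\,P_\beta=\overline{\rm{Ran}}(I-T)=X$, so $P_\beta=0$ and therefore ${\rm{Ker}}(I-T)={\rm{Ran}}\,P_\beta=\{0\}$. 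Consequently $\Phi_\alpha(1-z)=I-T$ is injective and, as $(1-z)\mathfrak f\in A^{\alpha}(\D)$, the function $1-z$ is an $\alpha$-regulariser for $\mathfrak f$ in the sense of Definition \ref{a-reg}. Thus $\mathfrak f(T)=(I-T)^{-1}[(1-z)\mathfrak f](T)$ with $[(1-z)\mathfrak f](T)$ bounded, and $x\in{\rm{Dom}}\,\mathfrak f(T)$ if and only if $[(1-z)\mathfrak f](T)x\in{\rm{Ran}}(I-T)$, in which case $\mathfrak f(T)x$ is the unique preimage of $[(1-z)\mathfrak f](T)x$ under $I-T$.

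Next I would establish the key identity. Writing $\mathfrak f_n=\mathfrak g_n\mathfrak f$, multiplying \eqref{4.2} by $\mathfrak f$ yields the equality $(1-z)\mathfrak f_n=\mathfrak h_n\mathfrak f-D^{\alpha}f(n-1)\Delta^{-\alpha}\mathcal{Z}(n)$ in $A^{\alpha}(\D)$ (every factor now lies in the algebra). Since $\mathfrak f_n$ is a polynomial with $\Phi_\alpha(\mathfrak f_n)=S_n$, applying the homomorphism $\Phi_\alpha$ gives, for every $x\in X$,
$$
(I-T)S_n(x)=[\mathfrak h_n\mathfrak f](T)x-D^{\alpha}f(n-1)\Delta^{-\alpha}\mathcal{T}(n)x.
$$
Here $[\mathfrak h_n\mathfrak f](T)\to[(1-z)\mathfrak f](T)$ in operator norm, because $\|\mathfrak h_n\mathfrak f-(1-z)\mathfrak f\|_{A^{\alpha}(\D)}\to0$ (established inside the proof of Theorem \ref{BAI}(iii)) together with \eqref{ine}; and the boundary term decays in operator norm, $\|D^{\alpha}f(n-1)\Delta^{-\alpha}\mathcal{T}(n)\|\le K_\alpha(T)\,D^{\alpha}f(n-1)\,k^{\alpha+1}(n)\to0$, by \eqref{double2} (so $k^{\alpha+1}(n)\sim n^{\alpha}/\Gamma(\alpha+1)$) and the hypothesis $D^{\alpha}f(j)j^{\alpha}\to0$. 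These two facts together produce the crucial \emph{unconditional} limit $(I-T)S_n\to[(1-z)\mathfrak f](T)$ in operator norm, valid for every $x$ regardless of membership in any domain.

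With this in hand the equivalences fall out. The implication (ii)$\Rightarrow$(iii) is trivial. For (iii)$\Rightarrow$(i), if $S_n(x)$ converges weakly to $y$ then, $I-T$ being weakly continuous, $(I-T)S_n(x)$ converges weakly to $(I-T)y$; comparing with the norm (hence weak) limit above and invoking uniqueness of weak limits gives $(I-T)y=[(1-z)\mathfrak f](T)x\in{\rm{Ran}}(I-T)$, so $x\in{\rm{Dom}}\,\mathfrak f(T)$ and $\mathfrak f(T)x=y$ by injectivity of $I-T$. For (i)$\Rightarrow$(ii) I would write $S_n(x)=\mathfrak f_n(T)x=\mathfrak g_n(T)\mathfrak f(T)x$ (product rule of the regularised calculus, \cite{Haase}) and use the strong convergence $\mathfrak g_n(T)\to I$ on $X=\overline{\rm{Ran}}(I-T)$: this follows from Proposition \ref{prop7.1}(i) when $\mathfrak f(1)<\infty$, and from Corollary \ref{Cor8.5} when $\mathfrak f(1)=\infty$, whose hypothesis is precisely the operator-norm decay of $D^{\alpha}f(n-1)\Delta^{-\alpha}\mathcal{T}(n)$ just obtained. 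Hence $S_n(x)\to\mathfrak f(T)x$ in norm, which is simultaneously (ii) and the asserted formula.

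The step I expect to be the main obstacle is (i)$\Rightarrow$(ii): the key identity only controls $(I-T)S_n(x)$, and since $I-T$ need not be bounded below one cannot invert the limit to recover convergence of $S_n(x)$ itself. Overcoming this is exactly what forces the passage through the approximate-identity results of Section \ref{admisi} (Corollary \ref{Cor8.5}) to obtain $\mathfrak g_n(T)\to I$ strongly. This is also the point at which the quantitative hypothesis $D^{\alpha}f(j)j^{\alpha}\to0$, absent in the power-bounded case $\alpha=0$, is genuinely used, through the growth estimate $\|\Delta^{-\alpha}\mathcal{T}(n)\|\le K_\alpha(T)k^{\alpha+1}(n)$.
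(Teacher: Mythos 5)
Your proposal is correct and follows essentially the same route as the paper's own proof: regularisation of $\mathfrak f$ by $1-z$, the identity \eqref{4.2} applied through $\Phi_\alpha$ to get $(I-T)S_n=[\mathfrak h_n\mathfrak f](T)-D^{\alpha}f(n-1)\Delta^{-\alpha}\mathcal{T}(n)\to[(1-z)\mathfrak f](T)$ for the implication (iii)$\Rightarrow$(i), and the strong convergence $\mathfrak g_n(T)\to I$ via Corollary \ref{Cor8.5} (whose hypothesis is verified exactly by the decay $D^{\alpha}f(n-1)k^{\alpha+1}(n)\to0$) together with the inclusion $\mathfrak g_n(T)\mathfrak f(T)\subset(\mathfrak g_n\mathfrak f)(T)$ for (i)$\Rightarrow$(ii). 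The only differences are presentational: you spell out explicitly the deduction ${\rm Ker}(I-T)=\{0\}$ from $\overline{\rm Ran}(I-T)=X$ and the case split between Proposition \ref{prop7.1}(i) and Corollary \ref{Cor8.5}, points the paper leaves implicit.
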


\begin{proof}
Since $\overline{\rm{ Ran }}(I-T)=X$ we have that $\mathfrak{g}_{n}(T)$ converges to the identity operator in the strong topology, by Corollary \ref{Cor8.5}. From the hypothesis, $\mathfrak{f}$ is $\alpha$-regularizable by $1-z$ so that there exists $\mathfrak{f}(T)$ (as closed operator).

(i)$\Rightarrow$(ii) Let $x\in \rm{ Dom }\,\mathfrak{f}(T).$ Since $\mathfrak{g}_{n}(T)\mathfrak{f}(T)\subset (\mathfrak{g}_{n}\mathfrak{f})(T),$ one gets
$$
\sum_{j=0}^{n-1}D^{\alpha}f(j)\Delta^{-\alpha}\mathcal{T}(j)x
=(\mathfrak{g}_{n}\mathfrak{f})(T)x
=\mathfrak{g}_{n}(T)\mathfrak{f}(T)x\to\mathfrak{f}(T)x,\quad n\to\infty.
$$

(ii)$\Rightarrow$(iii) This is obvious.

(iii)$\Rightarrow$(i) Assume
$\displaystyle y:=\lim_{n\to\infty}\sum_{j=0}^{n-1}D^{\alpha}f(j)\Delta^{-\alpha}\mathcal{T}(j)x$ weakly, for some $x\in X$. Let $\mathfrak{f}_n$ be as prior to Theorem \ref{lemma6.1}. Then,
$$
[(1-z)\mathfrak{f}](T)x=\lim_{n\to\infty}(I-T)\mathfrak{f}_{n}(T)x=(I-T)\sum_{j\geq 0}D^{\alpha}f(j)\Delta^{-\alpha}\mathcal{T}(j)x=(I-T)y,
$$
where the last equality is weakly. Hence $x\in\rm{ Dom }\,\mathfrak{f}(T)$ and
$$
\mathfrak{f}(T)x=(I-T)^{-1}[(1-z)\mathfrak{f}](T)x=y.
$$
\end{proof}

\begin{remark}
\normalfont 
Note that from \eqref{4.2} we have
$(I-T)\mathfrak{f}_n(T)
=(\mathfrak{f}\mathfrak{h}_n)(T)-D^{\alpha}f(n-1)\Delta^{-\alpha}\mathcal{T}(n)$.
In addition,
$D^{\alpha}f(n-1)\Delta^{-\alpha}\mathcal{T}(n)
=TD^{\alpha}f(n-1)\Delta^{-\alpha}\mathcal{T}(n-1)+D^{\alpha}f(n-1)k^{\alpha}(n)$.
Then, following ideas of \cite[Th. 5.6]{HT}, the previous identities imply that the assertions (i)-(iii) of Theorem \ref{dominios} are equivalent to the Ces\`aro convergence and/or the Ces\`aro weak convergence of  $\sum_{j\geq 0}D^{\alpha}f(j)\Delta^{-\alpha}\mathcal{T}(j)x.$
Also, similarly to \cite[Th. 5.6]{HT}, if $X$ is a reflexive Banach space then assertions (i)-(iii) of Theorem \ref{dominios} are equivalent to
$\sup_{N}\lVert \sum_{j=0}^N D^{\alpha}f(j)\Delta^{-\alpha}\mathcal{T}(j)x\rVert <\infty$.

The assertions of this remark are left to the readers for verification.
\end{remark}

\section{Fractional Poisson equation and the logarithm}\label{PoiLo}
\setcounter{theorem}{0}
\setcounter{equation}{0}

As before, we assume $T$ is a $(C,\alpha)$-bounded operator on $X$ with $(I-T)X$ dense in $X$. Then it is readily seen by \eqref{-sto-1} that $(I-T)^{s}$ --defined by the functional calculus-- is a $C_0$-semigroup of bounded operators on $X$, holomorphic in $\Real s>0$. Let $\log(I-T)$ denote the infinitesimal generator of $\left((I-T)^{s}\right)_{\Real s>0}$. Next, we discuss the solvability of the fractional Poisson equation for $T$, as well as the domain of the generator $\log(I-T)$. To do this, we apply the results on domains of operatorial functions of Section \ref{DOF}.

\medskip
{\bf Fractional Poisson equation.} This is $(I-T)^s u=x$ where $x$ is given and $u$ is the unknown. By hypothesis, $(I-T)$ is injective and so $(I-T)^{-s}:=((I-T)^{s})^{-1}$ is such that ${\rm{ Ran }}(I-T)^s={\rm{ Dom }} (I-T)^{-s}$. In fact,
$(I-T)^{-s}={\mathfrak k}^s(T)$ where ${\mathfrak k}^s=(1-z)^{-s}$ is $\alpha$-admissible; see Section \ref{ejemplos}.
Obviously, the equation is solved if and only if $x$ lies in ${\rm{ Ran }}(I-T)^s$ and the solution $u$ is
$u=(I-T)^{-s}x$. First, we characterize the occurrence of $x$ in ${\rm{ Ran }}(I-T)^s$ through convergence of series involving 
Ces\`aro sums of $T$.

\begin{theorem}\label{CesPoisson} Let $T$ be a $(C,\alpha)$-bounded operator on $X$ with $\overline{(I-T)X}=X$ and let 
$0<s<1$.
For $x\in X$ the following assertions are equivalent:
\begin{itemize}
\item[(i)] $x\in {\rm{ Ran }}(I-T)^s$.
\item[(ii)] The series $\displaystyle\sum_{n=1}^\infty\frac{1}{n^{1+\alpha-s}}\Delta^{-\alpha}\mathcal{T}(n)x$ converges in norm.
\item[(iii)] The series $\displaystyle\sum_{n=1}^\infty\frac{1}{n^{1+\alpha-s}}\Delta^{-\alpha}\mathcal{T}(n)x$ converges
weakly in $X$.
\end{itemize}

Furthermore, if (i), (ii) or (iii) hold true then
$$
(I-T)^{-s}x=\frac{\sin(\pi s)\Gamma(1-s+\alpha)}{\pi}
\sum_{n=0}^\infty\frac{\Gamma(s+n)}{\Gamma(n+\alpha+1)}\Delta^{-\alpha}\mathcal{T}(n)x.
$$
\end{theorem}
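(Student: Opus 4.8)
The plan is to deduce the theorem from the general domain characterization of Theorem \ref{dominios}, applied to the single function $\mathfrak{f}=\mathfrak{k}^s=(1-z)^{-s}$, and then to replace the fractional-difference weights $D^\alpha k^s(n)$ by the cleaner weights $n^{s-\alpha-1}$ by an asymptotic comparison. First I would verify that $\mathfrak{k}^s$ meets the hypotheses of Theorem \ref{dominios}: its $\alpha$-admissibility together with $(1-z)\mathfrak{k}^s=\mathfrak{k}^{s-1}\in A^\alpha(\D)$ is exactly Example \ref{Ex63}, while \eqref{2.11} gives $D^\alpha k^s(n)\,n^\alpha\sim C\,n^{s-1}\to0$ (using $s<1$), so the condition $D^\alpha f(j)j^\alpha\to0$ holds. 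Since $\overline{(I-T)X}=X$, the operator $I-T$ is injective (as recorded in the preamble of this section), whence $(I-T)^{-s}=\mathfrak{k}^s(T)$ and ${\rm Dom}\,(I-T)^{-s}={\rm Ran}\,(I-T)^s$; thus (i) is the same as $x\in{\rm Dom}\,\mathfrak{k}^s(T)$.

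Invoking Theorem \ref{dominios} then equates (i) with the norm convergence, and with the weak convergence, of $\sum_{n\ge0}D^\alpha k^s(n)\Delta^{-\alpha}\mathcal{T}(n)x$, and furnishes the identity $\mathfrak{k}^s(T)x=\sum_{n\ge0}D^\alpha k^s(n)\Delta^{-\alpha}\mathcal{T}(n)x$ on this domain. It remains to transfer these two convergence statements to the series in (ii) and (iii).

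The technical heart is the claim that $\sum_n D^\alpha k^s(n)\Delta^{-\alpha}\mathcal{T}(n)x$ and $\sum_n n^{s-\alpha-1}\Delta^{-\alpha}\mathcal{T}(n)x$ converge simultaneously, in either topology. For this I would set $C:=\Gamma(1-s+\alpha)/(\Gamma(s)\Gamma(1-s))>0$ and read off from \eqref{2.11} that $D^\alpha k^s(n)-C\,n^{s-\alpha-1}=O(n^{s-\alpha-2})$. Combining this with the $(C,\alpha)$-bound $\|\Delta^{-\alpha}\mathcal{T}(n)\|\le K_\alpha(T)\,k^{\alpha+1}(n)=O(n^{\alpha})$ coming from \eqref{double2}, the general term of the difference series is $O(n^{s-2})\|x\|$; because $0<s<1$ forces $s-2<-1$, the series $\sum_n\bigl(D^\alpha k^s(n)-C\,n^{s-\alpha-1}\bigr)\Delta^{-\alpha}\mathcal{T}(n)x$ converges absolutely in norm, hence also weakly. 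Since $C\neq0$, adding this absolutely convergent remainder shows that the two target series converge together in norm and in the weak topology, the $n=0$ term being a harmless constant multiple of $x$. Chaining with the equivalences of the previous paragraph yields (i)$\Leftrightarrow$(ii)$\Leftrightarrow$(iii).

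Finally, for the explicit formula I would rewrite $D^\alpha k^s(n)$ by means of \eqref{2.10} and the reflection identity $\Gamma(s)\Gamma(1-s)=\pi/\sin(\pi s)$, giving $D^\alpha k^s(n)=\frac{\sin(\pi s)\Gamma(1-s+\alpha)}{\pi}\cdot\frac{\Gamma(s+n)}{\Gamma(n+\alpha+1)}$, and then substitute this into $(I-T)^{-s}x=\mathfrak{k}^s(T)x=\sum_{n\ge0}D^\alpha k^s(n)\Delta^{-\alpha}\mathcal{T}(n)x$ to recover the stated expression. I expect the asymptotic comparison to be the only delicate point: one must be sure the error term in \eqref{2.11} is genuinely $O(n^{s-\alpha-2})$ and that $\|\Delta^{-\alpha}\mathcal{T}(n)x\|=O(n^{\alpha})$, so that the remainder series lands strictly inside the convergence range $s-2<-1$; the rest is bookkeeping around the constant $C$ and the $n=0$ term.
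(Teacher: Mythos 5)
Your proposal is correct and follows essentially the same route as the paper's proof: both reduce the statement to Theorem \ref{dominios} applied to $\mathfrak{f}=\mathfrak{k}^s$ (whose hypotheses---$\alpha$-admissibility, $(1-z)\mathfrak{k}^s=\mathfrak{k}^{s-1}\in A^\alpha(\D)$, and $D^\alpha k^s(j)j^\alpha\to0$---are exactly the content of Example \ref{Ex63}), identify the coefficients via \eqref{2.10}, and pass from the weights $D^\alpha k^s(n)$ to $n^{s-\alpha-1}$ using the asymptotics \eqref{2.11}. The only difference is that you spell out the absolute-convergence comparison (the $O(n^{s-2})$ remainder estimate against $\|\Delta^{-\alpha}\mathcal{T}(n)\|=O(n^\alpha)$), which the paper compresses into the single sentence that it suffices to apply \eqref{2.11}.
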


\begin{proof}
The equivalence between (i) and the either weak or in-norm convergence of the series
$$
\frac{\sin(\pi s)\Gamma(1-s+\alpha)}{\pi}
\sum_{n=0}^\infty\frac{\Gamma(s+n)}{\Gamma(n+\alpha+1)}\Delta^{-\alpha}\mathcal{T}(n)x
$$
follows by Theorem \ref{dominios} and \eqref{2.10}. To finish the proof is then enough to apply \eqref{2.11}. 
\end{proof}

The mean ergodic theorem for power-bunded operators $T$
says that
$\sup_n\Vert M_T^0(n)\Vert<\infty$ jointly with ${\overline{\rm{ Ran }}}(I-T)=X$ imply that $M_T^1(n)x$ converges (to $0$), 
as $n\to\infty$, for every $x\in X$. Further, if $x\in{\rm{ Ran}}(I-T)^s$ with $0<s<1$ then the  rate of convergence 
is
$\Vert M_T^1(n)x\Vert=o(n^{-s})$ as $n\to\infty$, \cite{DL, HT}. We have something similar for $(C,\alpha)$-bounded operators.

\begin{corollary}\label{GenRatePoisson}
Let $\alpha>0$ and let $T$ be a $(C,\alpha)$-bounded operator on $X$ where $X={\overline{\rm{ Ran }}}(I-T)$. Then
$\lim_{n\to\infty} M_T^{\alpha+1}(n)x=0$ for every $x\in X$. Moreover, if $x\in{\rm{ Ran}}(I-T)^s$ with $0<s<1$ then
$$
\Vert M_T^{\alpha+1}(n)x\Vert=o (n^{-s}), \quad \hbox{ as } n\to\infty.
$$
\end{corollary}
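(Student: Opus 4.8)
The plan is to read off both assertions from the Ces\`aro-sum representation of the order-$(\alpha+1)$ mean together with Theorem~\ref{CesPoisson}. First I would record the elementary identity
$$
\Delta^{-(\alpha+1)}\mathcal{T}(n)=\sum_{j=0}^n\Delta^{-\alpha}\mathcal{T}(j),
$$
which is immediate from the group property $k^{\alpha+1}=k^1\ast k^\alpha$ and $k^1\equiv 1$ (it is the operator version of the identity $\Delta^{-p}\mathcal{Z}(n)=\sum_{j=0}^n\Delta^{-(p-1)}\mathcal{Z}(j)$ recalled in Remark~\ref{integral_representation}). Since $k^{\alpha+2}(n)M_T^{\alpha+1}(n)=\Delta^{-(\alpha+1)}\mathcal{T}(n)$, this yields
$$
M_T^{\alpha+1}(n)x=\frac{1}{k^{\alpha+2}(n)}\sum_{j=0}^n\Delta^{-\alpha}\mathcal{T}(j)x .
$$

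For the convergence to zero I would apply Lemma~\ref{dondeconverge} with $\beta=\alpha+1>\alpha$. It gives $X_{\alpha+1}={\rm Ker}(I-T)\oplus\overline{\rm Ran}(I-T)$ and ${\rm Ker}\,P_{\alpha+1}=\overline{\rm Ran}(I-T)$. The hypothesis $X=\overline{\rm Ran}(I-T)$ therefore forces $X_{\alpha+1}=X$ and $P_{\alpha+1}=0$, so $M_T^{\alpha+1}(n)x\to P_{\alpha+1}x=0$ for every $x\in X$ (equivalently, $T$ is $(C,\alpha+1)$-ergodic by Theorem~\ref{MeanErg}, with vanishing limit projection).

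For the rate, fix $x\in{\rm Ran}(I-T)^s$ with $0<s<1$ and abbreviate $a_j:=\Delta^{-\alpha}\mathcal{T}(j)x$. By Theorem~\ref{CesPoisson} the series $\sum_{j\ge1}j^{-(1+\alpha-s)}a_j$ converges in norm. The weights $c_j:=j^{1+\alpha-s}$ are positive, strictly increasing and tend to $\infty$ (note $1+\alpha-s>0$ since $\alpha>0$ and $s<1$), and $\sum_j c_j^{-1}a_j$ converges, so Kronecker's lemma---whose Abel-summation proof carries over verbatim to Banach-space--valued terms because only the scalar weights $c_j$ enter---gives $n^{-(1+\alpha-s)}\sum_{j=1}^n a_j\to 0$. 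The term $a_0=x$ contributes $n^{-(1+\alpha-s)}x\to0$, so the same limit holds for the sum from $j=0$. Finally, inserting the asymptotics $k^{\alpha+2}(n)=\Gamma(\alpha+2)^{-1}n^{\alpha+1}(1+O(1/n))$ from \eqref{double2},
$$
\|M_T^{\alpha+1}(n)x\|=\frac{1}{k^{\alpha+2}(n)}\Big\|\sum_{j=0}^n a_j\Big\|
=\frac{\Gamma(\alpha+2)(1+O(1/n))}{n^{s}}\cdot\frac{1}{n^{1+\alpha-s}}\Big\|\sum_{j=0}^n a_j\Big\|
=o(n^{-s}),
$$
as claimed.

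The single genuine point is the reduction to Kronecker's lemma: one must notice that the exponent $1+\alpha-s$ appearing in Theorem~\ref{CesPoisson} is exactly the power of $n$ that, after accounting for $k^{\alpha+2}(n)\sim n^{\alpha+1}/\Gamma(\alpha+2)$, leaves the desired factor $n^{-s}$. In other words, the order-$(\alpha+1)$ Ces\`aro average of $x$ is, up to the normalizing constant and the asymptotically negligible $a_0$ term, precisely the Kronecker average attached to the convergent series characterizing ${\rm Ran}(I-T)^s$ in Theorem~\ref{CesPoisson}. Everything else is routine bookkeeping with the Ces\`aro numbers.
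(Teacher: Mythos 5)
Your proof is correct and follows essentially the same route as the paper: the vanishing of $M_T^{\alpha+1}(n)x$ comes from Lemma~\ref{dondeconverge}/Theorem~\ref{MeanErg} applied with $\beta=\alpha+1$, and the rate comes from combining Theorem~\ref{CesPoisson} with Kronecker's lemma and the asymptotics $k^{\alpha+2}(n)\sim n^{\alpha+1}/\Gamma(\alpha+2)$ from \eqref{double2}. Your treatment is in fact slightly more careful than the paper's, since you handle the $j=0$ term and note explicitly that Kronecker's lemma carries over to Banach-space-valued series.
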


\begin{proof}
The first part of the corollary is a direct consequence of Theorem \ref{MeanErg}. As for the rate of convergence note that
by Theorem \ref{CesPoisson} the series
$\sum_{n=1}^\infty n^{s-1-\alpha}\Delta^{-\alpha}\mathcal{T}(n)x$ is convergent in norm. Then, by Kronecker's Lemma \cite[p. 103]{DL},
$\lim_{n\to\infty}\Vert n^{s-\alpha-1}\sum_{j=1}^n\Delta^{-\alpha}\mathcal{T}(j)x\Vert=0$
and therefore
$$
\Vert M_T^{\alpha+1}(n)x\Vert=\left\Vert {(k^1\ast k^\alpha\ast\mathcal T)(n)\over k^{\alpha+2}(n)}x\right\Vert
\sim\Vert{1\over n^{\alpha+1}}\sum_{j=1}^n\Delta^{-\alpha}\mathcal{T}(j)x\Vert=o(n^{-s}),
$$
as $n\to\infty$, as we wanted to show.
\end{proof}

QUESTION: Since $T$ being $(C,\alpha)$-bounded on $X={\overline{\rm{ Ran }}}(I-T)$ implies $T$ is $(C,\beta)$-ergodic
for every $\beta>\alpha$, is it possible to obtain, similarly to the above Corollary \ref{GenRatePoisson}, 
the rate of convergence 
(to $0$) of
$\Vert M_T^{\alpha+\varepsilon}(n)x\Vert$ for every $\varepsilon>0$ ?

\medskip
The fact that the rate of convergence of the series in the above theorem and corollary are given in terms of (fractional) 
Ces\`aro sums seems to be natural, on account of the general character of $(C,\alpha)$-bounded operators.
However, if one restricts the range of fractional $\alpha$ to values between $0$ and $1-s$, it is then possible to express convergence involving only the Taylor series of $(I-T)^{-s}$. The next theorem extends the corresponding results obtained for power-bounded operators in \cite{DL,HT}.

\begin{theorem}\label{equivTaylor}
Let $s$ be such that $0<s<1$. Take $\alpha$ such that $0\le\alpha<1-s$. Let $T$ be a $(C,\alpha)$-bounded operator on $X$ with $\overline{(I-T)X}=X$. Then the following assertions are equivalent.
\begin{itemize}
\item[(i)] $x\in {\rm{ Ran }}(I-T)^s$.
\item[(ii)] The series $\displaystyle\sum_{n=1}^\infty\frac{1}{n^{1-s}}T^n x$ converges in norm (or weakly).
\end{itemize}

In any of the aboves cases,
$$
(I-T)^{-s}x=\sum_{n=0}^\infty k^s(n)T^nx.
$$
\end{theorem}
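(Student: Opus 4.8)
The plan is to transfer the scalar approximation result Theorem~\ref{approximation01} to the operator level through the functional calculus $\Phi_\alpha$, in close analogy with the proof of Theorem~\ref{dominios}, and then to reconcile the two weights $k^s(n)$ and $n^{s-1}$ by an elementary asymptotic comparison that consumes exactly the hypothesis $\alpha<1-s$. Recall from Example~\ref{Ex63} that $\mathfrak k^s$ is $\alpha$-admissible, so that $(I-T)^{-s}=\mathfrak k^s(T)$ and ${\rm{Ran}}\,(I-T)^s={\rm{Dom}}\,\mathfrak k^s(T)$. With $\mathfrak f=\mathfrak k^s$, write $\mathfrak g^0_n$ as in Section~\ref{taylor} and set $\mathfrak f^0_n:=\mathfrak g^0_n\mathfrak k^s=\sum_{j=0}^{n-1}k^s(j)z^j$, so that $\mathfrak f^0_n(T)=\sum_{j=0}^{n-1}k^s(j)T^j$ is precisely the $n$-th partial sum of the Taylor series. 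First I would apply $\Phi_\alpha$ to Theorem~\ref{approximation01}: the estimate \eqref{ine} together with part~(i) gives $\sup_n\|\mathfrak g^0_n(T)\|\le K_\alpha(T)\,M<\infty$, while part~(ii) yields $(I-T)\mathfrak g^0_n(T)=\Phi_\alpha((1-z)\mathfrak g^0_n)\to I-T$ in operator norm. Since $\mathfrak g^0_n(T)$ commutes with $I-T$, testing on $w=(I-T)a\in{\rm{Ran}}(I-T)$ gives $\mathfrak g^0_n(T)w=(I-T)\mathfrak g^0_n(T)a\to w$, and by the uniform bound and the density of ${\rm{Ran}}(I-T)$ in $X=\overline{{\rm{Ran}}}(I-T)$ one obtains $\mathfrak g^0_n(T)\to I$ strongly on $X$, exactly as in Corollary~\ref{Cor8.5}.

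For the implication (i)$\Rightarrow$(ii), take $x\in{\rm{Dom}}\,\mathfrak k^s(T)$ and put $y:=(I-T)^{-s}x\in X$. The product rule for the regularized calculus gives $\mathfrak f^0_n(T)x=(\mathfrak g^0_n\mathfrak k^s)(T)x=\mathfrak g^0_n(T)\mathfrak k^s(T)x=\mathfrak g^0_n(T)y\to y$ in norm, so $\sum_{j\ge0}k^s(j)T^jx$ converges in norm to $(I-T)^{-s}x$. For the converse I would mimic the implication (iii)$\Rightarrow$(i) of Theorem~\ref{dominios}: assuming $\mathfrak f^0_n(T)x\to y$ weakly, use $(1-z)\mathfrak k^s=\mathfrak k^{s-1}\in A^\alpha(\D)$ to write $(I-T)\mathfrak f^0_n(T)=\Phi_\alpha(\mathfrak g^0_n\mathfrak k^{s-1})=\mathfrak g^0_n(T)\mathfrak k^{s-1}(T)$; the strong convergence of the previous paragraph then gives $(I-T)\mathfrak f^0_n(T)x\to\mathfrak k^{s-1}(T)x=[(1-z)\mathfrak k^s](T)x$, while weak convergence of $\mathfrak f^0_n(T)x$ gives $(I-T)\mathfrak f^0_n(T)x\to(I-T)y$ weakly. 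Equating the two limits and using $\mathfrak k^s(T)=(I-T)^{-1}[(1-z)\mathfrak k^s](T)$ shows $x\in{\rm{Dom}}\,\mathfrak k^s(T)$ with $(I-T)^{-s}x=y=\sum_{j\ge0}k^s(j)T^jx$.

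It remains to replace the weight $k^s(n)$ by $n^{s-1}$, and here is where I expect the only genuinely new computation, and the role of the bound $\alpha<1-s$, to appear. Since $k^{-\alpha}\ast k^\alpha=\delta_0$, one has $T^n=\sum_{j=0}^nk^{-\alpha}(n-j)\Delta^{-\alpha}\mathcal T(j)$, whence by the $(C,\alpha)$-boundedness of $T$ and the sign pattern \eqref{sign}, $\|T^n\|\le K_\alpha(T)\,(|k^{-\alpha}|\ast k^{\alpha+1})(n)=O(n^{\alpha})$. Combined with $k^s(n)-\Gamma(s)^{-1}n^{s-1}=O(n^{s-2})$ from \eqref{double2}, this makes $\sum_n\bigl(k^s(n)-\Gamma(s)^{-1}n^{s-1}\bigr)T^nx$ absolutely norm convergent, because $\sum_n n^{s-2+\alpha}<\infty$ precisely when $\alpha<1-s$. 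Consequently $\sum_n k^s(n)T^nx$ and $\sum_n n^{s-1}T^nx$ converge or diverge together, for both norm and weak convergence, which reconciles statement~(ii) with the series $\sum_j k^s(j)T^jx$ used above and yields the stated formula. The main obstacle is therefore not in the present argument, which is a faithful operator transcription of Theorems~\ref{approximation01} and~\ref{dominios}, but in the fact that the sharp approximation Theorem~\ref{approximation01} is available only in the range $\alpha<1-s$, the very same threshold that makes the weight comparison converge.
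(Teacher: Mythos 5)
Your proposal is correct and follows essentially the same route as the paper's own proof: the approximate identity $\mathfrak{g}^0_n$ of Theorem~\ref{approximation01} transferred through the calculus $\Phi_\alpha$, the inclusion $\mathfrak{g}^0_n(T)\mathfrak{k}^s(T)\subset(\mathfrak{g}^0_n\mathfrak{k}^s)(T)$ for (i)$\Rightarrow$(ii), and the regularization identity $(I-T)^{-s}=(I-T)^{-1}[(1-z)\mathfrak{k}^s](T)$ for the converse. The only difference is that you make explicit two steps the paper leaves implicit, namely the strong convergence $\mathfrak{g}^0_n(T)\to I$ (uniform boundedness plus density of ${\rm{Ran}}(I-T)$) and the passage between the weights $k^s(n)$ and $n^{s-1}$ via $\Vert T^n\Vert=O(n^{\alpha})$ and $\sum_n n^{s-2+\alpha}<\infty$, which is precisely the content of the paper's terse appeal to \eqref{double2} and is where $\alpha<1-s$ enters a second time.
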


\begin{proof}
The argument follows similar lines to the ones of previous Theorem \ref{CesPoisson} and Theorem \ref{dominios}, 
by using the approximate unit ${\mathfrak g}_n^0$ of Section \ref{taylor} instead ${\mathfrak g}_n$. In some detail, 
suppose first that $x$ belongs to
${\rm{ Ran }}(I-T)^s$. Then
${\mathfrak g}_n^0(T){\mathfrak k}^s(T)\subset ({\mathfrak g}_n^0{\mathfrak k}^s)(T)$ and hence we have
\begin{eqnarray*}
\sum_{n=0}^{ N-1} k^s(n)T^nx&=&\left(\sum_{n=0}^{ N-1} k^s(n)T^n\right){\mathfrak k}^{-s}(T){\mathfrak k}^s(T)x\\
&=&{\mathfrak g}_N^0(T)(I-T)^{-s}x\longrightarrow(I-T)^{-s}x,\quad N\to\infty.
\end{eqnarray*}
since $\lim_{N\to\infty}{\mathfrak g}_N^0= I$ strongly on $X$ by Theorem \ref{approximation01}. Conversely,
suppose now that there exists $y:=\lim_{N\to\infty}\sum_{n=0}^{N-1} k^s(n)T^nx$ for some $x\in X$ (weakly or in norm). Then
$$
[(1-z){\mathfrak k}^s](T)x=\lim_{\N\to\infty}(I-T){\mathfrak k}^s(T){\mathfrak g}_N^0(T)x
=
\lim_{\N\to\infty}(I-T)\left(\sum_{n=0}^{N-1} k^s(n)T^nx \right)=(I-T)y,
$$
so that $x\in {\rm{ dom }}(I-T)^{-s}$ and $(I-T)^{-s}x=y$. The equivalence between (i) and (ii) follows now from \eqref{double2}.
\end{proof}

\begin{corollary}\label{SmallRatePoisson}
Let $s$ be such that $0<s<1$ and let $\alpha$ be such that $0\le\alpha<1-s$. Assume that $T$ is a $(C,\alpha)$-bounded operator on $X$ with $X={\overline{\rm{ Ran }}}(I-T)$. Then
$\lim_{n\to\infty}n^{-1}\sum_{j=1}^nT^jx=0$ for every $x\in X$. Moreover, if $x\in{\rm{ Ran}}(I-T)^s$ then
$$
\Vert{1\over n}\sum_{j=1}^nT^jx\Vert=o(n^{-s}), \quad \hbox{ as } n\to\infty.
$$
\end{corollary}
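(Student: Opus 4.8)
The plan is to follow the same pattern as the proof of Corollary \ref{GenRatePoisson}, replacing the Ces\`aro-sum series of Theorem \ref{CesPoisson} with the genuine Taylor series supplied by Theorem \ref{equivTaylor}, which is available precisely because the restriction $0\le\alpha<1-s$ forces $\alpha<1$.

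First I would establish the unconditional limit $\lim_{n\to\infty}n^{-1}\sum_{j=1}^n T^j x=0$. Since $0\le\alpha<1$, the operator $T$ is $(C,\alpha)$-bounded with $\alpha<1$, and the hypothesis $X=\overline{\rm Ran}(I-T)$ forces the splitting $X={\rm Ker}(I-T)\oplus\overline{\rm Ran}(I-T)$ with ${\rm Ker}(I-T)=\{0\}$. By Corollary \ref{CorMeanErg} (iii) (equivalently Theorem \ref{MeanErg} with $\beta=1$), $T$ is then mean ergodic with projection $P_1=0$. Recalling that $k^1\equiv1$ and $k^2(n)=n+1$, one has $M_T^1(n)x=(n+1)^{-1}\sum_{j=0}^n T^j x$, so that $M_T^1(n)x\to0$ yields the first claim after discarding the term $x/n\to0$ and absorbing the harmless factor $(n+1)/n\to1$.

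For the rate of convergence I would invoke Theorem \ref{equivTaylor}: if $x\in{\rm Ran}(I-T)^s$ then $\sum_{n=1}^\infty n^{s-1}T^n x$ converges in norm. Writing this series in the form $\sum_{n\ge1}a_n/b_n$ with $a_n:=T^n x$ and $b_n:=n^{1-s}$ (where $b_n\uparrow\infty$ since $1-s>0$), Kronecker's Lemma \cite[p. 103]{DL} gives $\lim_{n\to\infty}n^{s-1}\sum_{j=1}^n T^j x=0$. Since $n^{-1}=n^{s-1}\cdot n^{-s}$, it follows that
$$
\Bigl\Vert\frac1n\sum_{j=1}^n T^j x\Bigr\Vert
= n^{-s}\,\Bigl\Vert n^{s-1}\sum_{j=1}^n T^j x\Bigr\Vert = o(n^{-s}),
$$
as desired.

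The argument is essentially routine once the correct ingredients are identified; the only points requiring a little care are the choice of $b_n$ in Kronecker's Lemma that converts convergence of the weighted series into the stated $o(n^{-s})$ decay, and the elementary comparison between $n^{-1}\sum_{j=1}^n T^j x$ and the first Ces\`aro mean $M_T^1(n)x$. I do not expect any serious obstacle, the substance of the result residing entirely in the already-established Theorem \ref{equivTaylor}.
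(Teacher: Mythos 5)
Your proposal is correct and follows essentially the same route as the paper's own (very condensed) proof: identify $n^{-1}\sum_{j=1}^n T^jx$ with the mean $M_T^1(n)x$ up to harmless factors and use Theorem \ref{MeanErg} for the unconditional limit, then feed the norm-convergent series of Theorem \ref{equivTaylor} into vector-valued Kronecker's Lemma to get the $o(n^{-s})$ rate. The details you supply (the splitting with ${\rm Ker}(I-T)=\{0\}$, the choice $b_n=n^{1-s}$) are exactly the ones the paper leaves implicit by referring back to Corollary \ref{GenRatePoisson}.
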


\begin{proof}
Note that $n^{-1}\sum_{j=1}^nT^j=M_T^1(n)$ for every $n$. Then the proof of this corollary is similar to that one of
Corollary \ref{GenRatePoisson}, using in particular (vector) Kronecker's Lemma once again.
\end{proof}

\medskip
As pointed out in Remark \ref{NoBAI}, $({\mathfrak g}_n^0)_{n\ge1}$ is not a bounded approximate unit in
$A^\alpha(\D)$ when
$\alpha\ge1-s$ so that the above type of argument does not work in this case to prove the equivalence established in
Theorem \ref{equivTaylor}.

\medskip
{\bf The operatorial logarithmic function and the discrete Hilbert transform.}
For $z\in\D$, there is the decomposition
$$
\displaystyle\log(1-z)=-\sum_{n\geq 1}\frac{z^j}{n}=\mathfrak{h}(z)-\mathfrak{L}(z)
$$
where
$$
\mathfrak{h}(z):=1-\sum_{n=1}^{\infty}\frac{z^n}{n(n+1)},\quad \mathfrak{L}(z):=\sum_{n=0}^{\infty}\frac{z^n}{n+1}.
$$
The function $\mathfrak{L}$, with Taylor coefficients $L(n):=(n+1)^{-1}$, has been studied in Example \ref{Ex64},
where it has been shown that it is an $\alpha$-admissible function with
$D^{\alpha}L(n)n^{\alpha}\to 0$ as $n\to\infty$, and that $(L(n))_{n\in\N_0}$ is log-convex of any order.
Note that $(1-z)\mathfrak{L}=\mathfrak{h}\in A^{\alpha}(\D)$ for all $\alpha\geq 0$.

The functional calculus of Section \ref{CFcesar} enables us to define the closed operator 
$\log(I-T):=[\log(I-z)](T)$ by regularization. It is readily seen that $\log(I-T)$ is the infinitesimal generator
 of the holomorphic semigroup
$(I-T)^s$, $\Real s>0$.

\begin{theorem}\label{LOG}
Let $T$ be a $(C,\alpha)$-bounded operator on $X$ with $\overline{\rm{ Ran }}(I-T)=X.$
Given $x\in X$ the following are equivalent:
\begin{itemize}
\item[(i)] $x\in {\rm{ Dom }}(\log(I-T))$.
\item[(ii)] The series
$\displaystyle\sum_{n=1}^\infty\frac{1}{n^{1+\alpha}}\Delta^{-\alpha}\mathcal{T}(n)x$ converges in norm (or weakly). 
\end{itemize}
Furthermore, if (i) or (ii) holds true then
$$
\log(I-T)x=\left(\int_0^1{1-u^\alpha\over1-u}\ du\right)x
-\sum_{n=1}^\infty\frac{\Gamma(\alpha+1)\Gamma(n)}{\Gamma(n+\alpha+1)}\Delta^{-\alpha}\mathcal{T}(n)x.
$$
\end{theorem}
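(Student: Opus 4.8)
The plan is to exploit the decomposition $\log(1-z)=\mathfrak{h}(z)-\mathfrak{L}(z)$ recorded just before the statement, where $\mathfrak{h}\in A^\alpha(\D)$ (so that $\mathfrak{h}(T)$ is bounded) and $\mathfrak{L}$ is the $\alpha$-admissible function of Example \ref{Ex64}, which satisfies $(1-z)\mathfrak{L}=\mathfrak{h}$ and $D^\alpha L(n)n^\alpha\to0$. Since $\log(1-z)$ is $\alpha$-regularizable by $1-z$ (Example \ref{ex23}(ii)) and $(1-z)\log(1-z)=(1-z)\mathfrak{h}-\mathfrak{h}\in A^\alpha(\D)$, the very definition of the regularized calculus together with the multiplicativity of $\Phi_\alpha$ on $A^\alpha(\D)$ gives
$$
\log(I-T)=(I-T)^{-1}\big[(I-T)\mathfrak{h}(T)-\mathfrak{h}(T)\big]=\mathfrak{h}(T)-(I-T)^{-1}\mathfrak{h}(T)=\mathfrak{h}(T)-\mathfrak{L}(T),
$$
because $\mathfrak{L}(T)=(I-T)^{-1}\mathfrak{h}(T)$ is precisely the regularization of $\mathfrak{L}$ by $1-z$. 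As $\mathfrak{h}(T)$ is bounded, this identifies $\mathrm{Dom}(\log(I-T))=\mathrm{Dom}\,\mathfrak{L}(T)$ and reduces the whole problem to the study of $\mathfrak{L}(T)$.

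Next I would apply Theorem \ref{dominios} to $\mathfrak{L}$, whose hypotheses are exactly those checked in Example \ref{Ex64}, to obtain that $x\in\mathrm{Dom}\,\mathfrak{L}(T)$ if and only if $\sum_{j\geq0}D^\alpha L(j)\Delta^{-\alpha}\mathcal{T}(j)x$ converges in norm (equivalently weakly), with $\mathfrak{L}(T)x$ equal to that sum. This already yields the equivalence of (i) with the convergence of $\sum_{j}D^\alpha L(j)\Delta^{-\alpha}\mathcal{T}(j)x$. To replace this by the cleaner series of (ii), I would use the asymptotics \eqref{2.13}, namely $D^\alpha L(n)=\Gamma(\alpha+1)n^{-(1+\alpha)}(1+O(1/n))$, together with the a priori bound $\lVert\Delta^{-\alpha}\mathcal{T}(n)x\rVert\le K_\alpha(T)k^{\alpha+1}(n)\lVert x\rVert=O(n^\alpha)$: the difference of the two general terms has norm $O(n^{-2})$ and is thus absolutely summable, so the two series converge simultaneously. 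This proves (i)$\Leftrightarrow$(ii).

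For the explicit formula I would begin from $\log(I-T)x=\mathfrak{h}(T)x-\mathfrak{L}(T)x$ and shift the index by means of the recurrence $\Delta^{-\alpha}\mathcal{T}(j+1)=T\Delta^{-\alpha}\mathcal{T}(j)+k^\alpha(j+1)I$ already used in Lemma \ref{dondeconverge}, which rewrites as $\Delta^{-\alpha}\mathcal{T}(j)-\Delta^{-\alpha}\mathcal{T}(j+1)=(I-T)\Delta^{-\alpha}\mathcal{T}(j)-k^\alpha(j+1)I$. Summing this against $D^\alpha L(j)$ and using $(I-T)\mathfrak{L}(T)x=[(1-z)\mathfrak{L}](T)x=\mathfrak{h}(T)x$ leads to
$$
\log(I-T)x=\Big(\sum_{j=0}^\infty D^\alpha L(j)\,k^\alpha(j+1)\Big)x-\sum_{j=0}^\infty D^\alpha L(j)\,\Delta^{-\alpha}\mathcal{T}(j+1)x.
$$
From \eqref{2.12} one reads off $B(\alpha+1,n)=D^\alpha L(n-1)$, so the last sum is exactly $\sum_{n\geq1}B(\alpha+1,n)\Delta^{-\alpha}\mathcal{T}(n)x$; and by partial fractions $D^\alpha L(j)k^\alpha(j+1)=\frac{\alpha}{(j+1)(j+\alpha+1)}=\frac{1}{j+1}-\frac{1}{j+\alpha+1}$, whose sum telescopes to $\psi(\alpha+1)-\psi(1)=\int_0^1\frac{1-u^\alpha}{1-u}\,du$, delivering the stated constant.

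The main obstacle I anticipate is the bookkeeping of the last paragraph: splitting the telescoped sum into two separately convergent series (legitimate because $x\in\mathrm{Dom}\,\mathfrak{L}(T)$ forces convergence of $\sum_j D^\alpha L(j)\Delta^{-\alpha}\mathcal{T}(j)x$, while the asymptotics of $B(\alpha+1,n)$ give convergence of the shifted series), and justifying the termwise extraction of $(I-T)$ together with the operator identity $\log(I-T)=\mathfrak{h}(T)-\mathfrak{L}(T)$ at the level of the regularized calculus; once these are secured, the remaining scalar computation is routine.
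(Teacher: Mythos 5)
Your proposal is correct and follows essentially the same route as the paper's proof: the same decomposition $\log(1-z)=\mathfrak{h}(z)-\mathfrak{L}(z)$, the identification $\log(I-T)=\mathfrak{h}(T)-\mathfrak{L}(T)$ with coinciding domains, Theorem \ref{dominios} applied to the $\alpha$-admissible function $\mathfrak{L}$ together with the asymptotics \eqref{2.13} for the equivalence (i)$\Leftrightarrow$(ii), and the shift identity $T\Delta^{-\alpha}\mathcal{T}(n)=\Delta^{-\alpha}\mathcal{T}(n+1)-k^\alpha(n+1)I$ to produce the explicit formula. The only (immaterial) difference is that you evaluate the constant $\sum_{j\ge 0}\alpha/\bigl((j+1)(j+\alpha+1)\bigr)$ by partial fractions and the digamma series, whereas the paper uses a Laplace-transform computation; both yield $\int_0^1(1-u^\alpha)(1-u)^{-1}\,du$.
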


\begin{proof}
Let $\mathfrak{h}$ and $\mathfrak{L}$ be as prior to the theorem. Then we have
$(1-z)\log(1-z)=(1-z)\mathfrak{h}(z)-(1-z)\mathfrak{L}(z)=(1-z)\mathfrak{h}(z)-\mathfrak{h}(z)$ in $A^{\alpha}(\D)$
and therefore
$(I-T)\log(I-T)=(I-T)\mathfrak{h}(T)-\mathfrak{h}(T)$ in $\mathcal B(X)$, from which one obtains
$$
\log(I-T)=(I-T)^{-1}[(I-T)\mathfrak{h}(T)]-(I-T)^{-1}\mathfrak{h}(T)=\mathfrak{h}(T)-\mathfrak{L}(T)
$$
as closed operators on $X$. Moreover $\mathfrak{h}(T)$ is bounded and so the domains of $\log(I-T)$ and $\mathfrak{L}(T)$ coincide. Hence $x\in {\rm{ Dom }}(\log(I-T))$ if and only
$\displaystyle\sum_{n\geq 0}\frac{n!}{\Gamma(n+\alpha+2)}\Delta^{-\alpha}\mathcal{T}(n)x$ converges (in norm or weakly), according to Theorem \ref{dominios} and \eqref{2.12}. To get the equivalence between (i) and (ii) is now enough to use
\eqref{2.13}. 

As regards the range of $\log(I-T)$, note that on ${\rm{ Dom }}(\log(I-T))$ we have $\log(I-T)
=\mathfrak{h}(T)-\mathfrak{L}(T)=(1-T)\mathfrak{L}(T)-\mathfrak{L}(T)=-T\mathfrak{L}(T)$ and so, 
by Theorem \ref{dominios} and \eqref{2.12},
\begin{eqnarray*}
\log(I-T)x&=&-\sum_{n=0}^{\infty}D^{\alpha}L(n)T\Delta^{-\alpha}\mathcal{T}(n)x\\
&=&-\sum_{n=0}^{\infty}D^{\alpha}L(n)\Delta^{-\alpha}\mathcal{T}(n+1)x
+\sum_{n=0}^{\infty}D^{\alpha}L(n)k^\alpha(n+1)x\\
&=&-\sum_{n=1}^\infty\frac{\Gamma(\alpha+1)\Gamma(n)}{\Gamma(n+\alpha+1)}\Delta^{-\alpha}\mathcal{T}(n)x
+\sum_{n=0}^{\infty}{\alpha\over(n+1)(n+\alpha+1)}
\end{eqnarray*}
for every $x\in{\rm{ Dom }}(\log(I-T))$.

Finally,
\begin{eqnarray*}
\sum_{n=0}^{\infty}{\alpha\over(n+1)(n+\alpha+1)}
&=&\int_0^\infty\sum_{n=0}^{\infty}\left(e^{-(n+1)t}-e^{-(n+\alpha+1)t}\right) dt\\
&=&\int_0^\infty{1-e^{-\alpha t}\over1-e^{-t}}e^{-t}\ dt
=\int_0^1{1-u^\alpha\over1-u}\ du
\end{eqnarray*}
and we have done.
\end{proof}

\begin{remark}\label{hilbert}
\normalfont
Suppose for a moment that the operator $T$ is power-bounded on the Banach space $X$.
Then the formal expression
\begin{equation}\label{hilbertT}
H_T:=\sum_{n=1}^\infty{1\over n}T^n
\end{equation}
defines a closed operator on $X$ which is called the one-sided ergodic Hilbert transform, see \cite{DL} and \cite{HT}. In fact, $H_T=-\log(I-T)$ with ${\rm{ Dom }}(H_T)={\rm{ Dom }}(\log(I-T))$ in particular \cite{CCL, HT}.

Let us assume again that $T$ is a $(C,\alpha)$-bounded operator on $X$ for $\alpha>0$, as usually in the present paper. Then the above presentation of $H_T$ does not look very suitable for $T$ in principle. We wish to point out here that the rate of convergence occurring in (ii) of Theorem \ref{LOG} corresponds to a $(C,\alpha)$-bounded operator in the same way as the rate of convergence of $H_T$ corresponds to power-bounded operators. So one could define for $T$ its (one-side) Ces\`aro-Hilbert transform of order $\alpha$ (or $\alpha$-ergodic Hilbert transform, for short)
$H_T^{(\alpha)}$ as the operator given by
$$
H_T^{(\alpha)}x=
\sum_{n=1}^\infty\frac{\Gamma(\alpha+1)\Gamma(n)}{\Gamma(n+\alpha+1)}\Delta^{-\alpha}\mathcal{T}(n)x
+c_\alpha x,
$$
with $c_\alpha=-\int_0^1(1-u^\alpha)(1-u)^{-1}du$, obtained from \eqref{hilbertT} \lq\lq summing by parts up to order $\alpha$".

Then Theorem \ref{LOG} establishes that the infinitesimal generator $\log(I-T)$ of the semigroup $(I-T)^s$ and
$-H_T^{(\alpha)}$ coincide:
$$
\log(I-T)x=-H_T^{(\alpha)}x, \quad \forall x\in X \hbox{ such that }
\sum_{n=1}^\infty{1\over n^{1+\alpha}}\Delta^{-\alpha}\mathcal{T}(n)x
\hbox{ converges in } X.
$$
In this way, the theorem generalizes \cite[Prop. 3.3]{CCL}  and \cite[Th. 6.2]{HT}.
\end{remark}

\medskip
We now continue the discussion on the logarithm to show that the operator $-\log(I-T)$ is the genuine original Hilbert transform not only for power-bounded operators, but also for $(C,\alpha)$-bounded operators in the case
$0<\alpha<1$. This result extends \cite[Prop. 3.3]{CCL}  and \cite[Th. 6.2]{HT}.

\begin{theorem}\label{LOG&hilbert} Let $\alpha$ be such that $0<\alpha<1$. Let $T$ be a $(C,\alpha)$-bounded operator on $X$ with
$\overline{\rm{ Ran }}(I-T)=X$. For a given $x\in X$ the following are equivalent:
\begin{itemize}
\item[(i)] $x\in {\rm{ Dom }}(\log(I-T))$.
\item[(ii)] The series $\displaystyle\sum_{n=1}^\infty\frac{1}{n}T^nx$ converges (in norm or weakly).
\end{itemize}
In any of the above cases
$$
\log(I-T)x=-\sum_{n=1}^\infty\frac{1}{n}T^nx.
$$
\end{theorem}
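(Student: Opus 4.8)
The plan is to transcribe the scheme of Theorem \ref{equivTaylor}, replacing the power function $\mathfrak{k}^s$ and its Taylor approximate unit by the logarithmic function $\mathfrak{L}$ and the unit $\mathfrak{g}^L_n$ of Section \ref{taylor}. The starting point is the factorization obtained in the proof of Theorem \ref{LOG}: as closed operators one has $\log(I-T)=\mathfrak{h}(T)-\mathfrak{L}(T)=-T\mathfrak{L}(T)$, with $\mathfrak{h}(T)=[(1-z)\mathfrak{L}](T)$ bounded, so that ${\rm Dom}(\log(I-T))={\rm Dom}(\mathfrak{L}(T))$. Moreover, $X=\overline{{\rm Ran}}(I-T)$ forces ${\rm Ker}(I-T)=\{0\}$ (the intersection ${\rm Ker}(I-T)\cap\overline{{\rm Ran}}(I-T)$ is trivial by Lemma \ref{dondeconverge}, since $M_T^\beta(n)$ fixes ${\rm Ker}(I-T)$ but tends to $0$ on $\overline{{\rm Ran}}(I-T)$), so $I-T$ is injective and $\mathfrak{L}(T)=(I-T)^{-1}\mathfrak{h}(T)$ by the regularization of Definition \ref{a-reg}.

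The decisive first step is to prove that $\mathfrak{g}^L_n(T)\to I$ strongly on $X$, and this is exactly where the hypothesis $0<\alpha<1$ is used. Theorem \ref{approximation02} supplies both the uniform bound $\Vert\mathfrak{g}^L_n\Vert_{A^\alpha(\D)}\le M$ (for $0\le\alpha\le1$) and the convergence $(1-z)\mathfrak{g}^L_n\to 1-z$ in $A^\alpha(\D)$, the latter valid only for $0\le\alpha<1$ and genuinely failing at $\alpha=1$ by Remark \ref{NOaiLOG}. Applying the calculus together with the estimate \eqref{ine}, one gets $\mathfrak{g}^L_n(T)(I-T)a=[(1-z)\mathfrak{g}^L_n](T)a\to(I-T)a$ for every $a\in X$; the bound $\Vert\mathfrak{g}^L_n(T)\Vert\le K_\alpha(T)M$ and the density of ${\rm Ran}(I-T)$ in $X$ then upgrade this, by the usual $3\varepsilon$-argument, to $\mathfrak{g}^L_n(T)w\to w$ for all $w\in X$.

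For the direction (i)$\Rightarrow$(ii), take $x\in{\rm Dom}(\log(I-T))={\rm Dom}(\mathfrak{L}(T))$. Since $\mathfrak{g}^L_n(T)$ is bounded, commutes with $T$, and $(\mathfrak{g}^L_n\mathfrak{L})(z)=\sum_{j=0}^{n-1}(j+1)^{-1}z^j$ is a polynomial, one computes $\mathfrak{g}^L_n(T)\log(I-T)x=-T(\mathfrak{g}^L_n\mathfrak{L})(T)x=-\sum_{m=1}^{n}m^{-1}T^m x$; letting $n\to\infty$ and using Step~1 shows that $\sum_{m=1}^\infty m^{-1}T^m x$ converges in norm to $-\log(I-T)x$. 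For (ii)$\Rightarrow$(i), set $P_n:=(\mathfrak{g}^L_n\mathfrak{L})(T)=\sum_{j=0}^{n-1}(j+1)^{-1}T^j$ and suppose $S_n:=TP_nx=\sum_{m=1}^{n}m^{-1}T^m x$ converges, in norm or weakly, to some $w$. Because $(I-T)P_n=[(1-z)\mathfrak{L}\,\mathfrak{g}^L_n](T)=\mathfrak{h}(T)\mathfrak{g}^L_n(T)\to\mathfrak{h}(T)$ strongly, the identity $P_nx=(I-T)P_nx+S_n$ gives $P_nx\to\mathfrak{h}(T)x+w=:\tilde{y}$; applying $I-T$ yields $\mathfrak{h}(T)x=(I-T)\tilde{y}\in{\rm Ran}(I-T)$, whence $x\in{\rm Dom}(\mathfrak{L}(T))$ with $\mathfrak{L}(T)x=\tilde{y}$, and finally $\log(I-T)x=-T\mathfrak{L}(T)x=-T\tilde{y}=-\lim_n TP_nx=-w$ by continuity of $T$ (in the weak case, $T$ is weakly continuous, so $T\tilde{y}=w$ again).

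I expect the main obstacle to be precisely Step~1, the strong convergence $\mathfrak{g}^L_n(T)\to I$: this is where the restriction $\alpha<1$ becomes unavoidable and where the delicate sign and asymptotic estimates of Section \ref{taylor} (Theorem \ref{approximation02}) are consumed. Once that is in place, the rest is a routine transcription of the argument of Theorem \ref{equivTaylor}, organized around the relation $\log(I-T)=-T\mathfrak{L}(T)$, with the only point requiring care being the bookkeeping between the series $\sum(n+1)^{-1}T^nx$ (which represents $\mathfrak{L}(T)x$) and $\sum n^{-1}T^nx$ (which represents $-\log(I-T)x$); as shown above, applying the bounded operator $T$ term by term handles this passage cleanly in both the norm and the weak topologies.
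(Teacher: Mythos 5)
Your proposal is correct and follows essentially the same route as the paper: the paper likewise reduces to ${\rm Dom}(\log(I-T))={\rm Dom}(\mathfrak{L}(T))$ via $\log(I-T)=\mathfrak{h}(T)-\mathfrak{L}(T)$, transcribes the argument of Theorem \ref{equivTaylor} with the approximate identity $\mathfrak{g}_n^L$ of Theorem \ref{approximation02} in place of $\mathfrak{g}_n^0$, and recovers the series formula by passing to $n$-partial sums. Your explicit organization around $\log(I-T)=-T\mathfrak{L}(T)$, the identity $P_nx=(I-T)P_nx+S_n$, and the remark that $X=\overline{\rm Ran}(I-T)$ forces ${\rm Ker}(I-T)=\{0\}$ merely spells out details the paper leaves implicit.
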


\begin{proof}
As in the proof of Theorem \ref{LOG}, $\log(I-T)=\mathfrak{h}(T)-\mathfrak{L}(T)$ so that
$x\in{\rm{ Dom }}(\log(I-T))$ if and only if $x\in{\rm{ Dom }}(\mathfrak{L}(T))$. On the other hand one has that $x\in{\rm{ Dom }}(\mathfrak{L}(T))$ if and only if $\displaystyle\sum_{n=0}^\infty\frac{1}{n+1}T^nx$ converges, and in this case
$\displaystyle{\mathfrak{L}}(T)x=\sum_{n=0}^\infty\frac{1}{n+1}T^nx$. The latter assertion can be proved by the same argument as in the proof of Theorem \ref{equivTaylor}, by replacing there $(I-T)^s$ with $\mathfrak{L}(T)$ now, and
$\mathfrak{g}_n^0(T)$ with ${\mathfrak{g}}_n^L(T)$ 
(recall that ${\mathfrak{g}}_n^L$ is a bounded approximate identity for $(1-z)$ in
$A^\alpha(\D)$; see Theorem \ref{approximation02}). The identity
$\displaystyle\log(I-T)x=-\sum_{n=1}^\infty\frac{1}{n}T^nx$, for $x\in{\rm{ Dom }}(\log(I-T))={\rm{ Dom }}(\mathfrak{L}(T))$, follows then from the identity
$\log(I-T)=\mathfrak{h}(T)-\mathfrak{L}(T)$, taking $n$-partial sums and letting $n\to\infty$.
\end{proof}

\begin{remark}
\normalfont
As pointed out in Remark \ref{NOaiLOG}, $({\mathfrak g}_n^L)_{n\ge1}$ is not an approximate unit in
$A^\alpha(\D)$ when
$\alpha=1$ and therefore a type of argument as above is not enough to prove the equivalence established in
Theorem \ref{LOG&hilbert}, in this case. Also, as regards part (ii) of that theorem,  the decreasing rate to $0$ of 
$M^1_{T}(n)$ is a non simple matter which requires special treatment. For every power-bounded $T$, it has been elucidated in \cite{GHT}.
\end{remark}

\medskip
Related to the study of the logarithmic operator is the question that whether or not the equality
$\bigcup_{s>0}{\rm{ Ran }}(I-T)^s={\rm{ Dom }}(\log(I-T))$ holds true. In \cite[Th. 6.3]{HT}, the question was answered in the negative for every power-bounded operator $T$ with $1\notin \sigma(T)$. The argument to prove this result  relies on the sectorial functional calculus (Th. \ref{Sectorial}) and can be mimicked in the case of  $(C,\alpha)$-bounded operators. So we have the following.

\begin{proposition}
Let $\alpha>0$ and let $T$ be a $(C,\alpha)$-bounded operator on a Banach space $X$ such that
${\overline{\rm{Ran}}}(I-T)=X$ and $1\notin \sigma(T)$. Then we have the inclusion
$$
\bigcup_{s>0}{\rm{ Ran }}(I-T)^s\subset{\rm{ Dom }}(\log(I-T))
$$
and it is strict.
\end{proposition}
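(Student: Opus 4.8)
The plan is to reduce the whole statement to the sectorial functional calculus of the bounded operator $A:=I-T$ and then to reproduce the argument of \cite[Th. 6.3]{HT}. By Lemma \ref{Sector} and the discussion following it, $A$ is sectorial of angle $\pi/2$; the hypothesis $\overline{{\rm{Ran}}}(I-T)=X$ makes the regularized calculus available, while $1\notin\sigma(T)$ gives $0\notin\sigma(A)$. By Theorem \ref{Sectorial} the calculus $\Phi_\alpha$ on $A^{\alpha}(\D)$ coincides, through the change of variable $z\mapsto 1-z$, with the sectorial calculus of $A$; consequently the operators $(I-T)^s$ and $\log(I-T)$ coincide with the sectorial-calculus operators $A^s$ $(\Real s>0)$ and $\log A$. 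Hence the claim becomes exactly the sectorial assertion proved in \cite[Th. 6.3]{HT}, and the proof transfers once the two calculi have been identified.

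The inclusion is the easy half. Since $\log(I-T)$ is, by construction, the infinitesimal generator of the holomorphic semigroup $((I-T)^s)_{\Real s>0}$, the analytic smoothing property of holomorphic semigroups yields, for each fixed $s>0$,
$$
{\rm{Ran}}\,(I-T)^s\subseteq\bigcap_{n\ge 1}{\rm{Dom}}\,\bigl((\log(I-T))^n\bigr)\subseteq{\rm{Dom}}(\log(I-T)).
$$
Taking the union over $s>0$ gives $\bigcup_{s>0}{\rm{Ran}}(I-T)^s\subseteq{\rm{Dom}}(\log(I-T))$. At the level of the calculus this is the statement that $z\mapsto z^s\log z$ regularizes $\log z$, so that $\log(I-T)\,(I-T)^s$ extends to the everywhere-defined operator $(z^s\log z)(A)$; either formulation yields the desired containment with no difficulty.

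The heart of the matter, and the main obstacle, is the strictness. Here the plan is to reproduce the construction of \cite[Th. 6.3]{HT} inside the sectorial calculus of $A$, namely to exhibit a single vector $u\in{\rm{Dom}}(\log(I-T))$ lying outside ${\rm{Ran}}(I-T)^s$ for every $s>0$. Since $A$ is injective, $u\in{\rm{Ran}}\,A^s$ if and only if $A^{-s}u$ exists in $X$, so one seeks $u=\mathfrak{u}(A)v$ with $\mathfrak{u}$ chosen so that it is dominated by $\log$ along the distinguished boundary ray of the sector --- whence $\mathfrak{u}(A)$ maps into ${\rm{Dom}}(\log A)$ --- while $z^{-s}\mathfrak{u}(z)$ fails to be applicable, i.e. $A^{-s}u$ leaves $X$, for every $s>0$. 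The delicate step, exactly as in \cite{HT}, is to realize such a $u$ in the abstract Banach space $X$: this is achieved by integrating the sectorial resolvent $(\lambda-A)^{-1}$ of Lemma \ref{Sector} against $\mathfrak{u}$ along a path hugging the critical ray $\varphi=\pm\omega$, where the merely logarithmic regularity of $\mathfrak{u}$ is incompatible with any fractional-power decay. Producing and estimating this witness is where essentially all the work lies; once it is in hand one concludes $u\in{\rm{Dom}}(\log(I-T))\setminus\bigcup_{s>0}{\rm{Ran}}(I-T)^s$, and the inclusion is strict.
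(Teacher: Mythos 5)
Your high-level plan coincides with the paper's: the paper offers no detailed proof, only the observation that the argument of \cite[Th. 6.3]{HT} transfers to $(C,\alpha)$-bounded operators once Theorem \ref{Sectorial} identifies the $A^{\alpha}(\D)$-calculus with the sectorial calculus of $A=I-T$, and your treatment of the inclusion (holomorphic-semigroup smoothing, or regularization by $z^{s}\log z$) is fine. The genuine gap is in the strictness, and it begins with the spectral hypothesis. You read $1\notin\sigma(T)$ as giving $0\notin\sigma(A)$ and then set out to produce $u\in{\rm{Dom}}(\log A)$ with $A^{-s}u\notin X$ for every $s>0$. But if $0\notin\sigma(A)$ then $A$ is invertible in $\mathcal{B}(X)$, the compact set $\sigma(A)\subset\overline{S_{\pi/2}}\setminus\{0\}$ avoids the branch cut $(-\infty,0]$, and the Riesz--Dunford calculus (with which the regularized calculus is consistent) makes every $A^{\pm s}$ and $\log A$ bounded and everywhere defined; hence $\bigcup_{s>0}{\rm{Ran}}\,A^{s}=X={\rm{Dom}}(\log A)$, the inclusion is an equality, and no witness can exist. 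The hypothesis that actually powers \cite[Th. 6.3]{HT} is $1\in\sigma(T)$, i.e.\ $0\in\sigma(A)$; the ``$\notin$'' in the paper's statement is evidently a misprint, and any correct proof of strictness must use $0\in\sigma(A)$ essentially. Your attempt not only fails to flag that the statement as literally given is false, it builds its entire strategy on that reading, so the proposed construction cannot get off the ground.

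Second, even after correcting the hypothesis, your strictness argument is a program rather than a proof, and it is not the program the paper intends to mimic. The argument of \cite[Th. 6.3]{HT} is soft: writing $\bigcup_{s>0}{\rm{Ran}}\,A^{s}=\bigcup_{n}{\rm{Ran}}\,A^{1/n}$ and noting ${\rm{Dom}}(\log A)={\rm{Ran}}\bigl((\lambda-\log A)^{-1}\bigr)$ for suitable $\lambda$, one assumes the asserted equality holds; a Baire category argument then forces some closed operator $A^{-1/n}(\lambda-\log A)^{-1}=h(A)$, with $h(z)=z^{-1/n}(\lambda-\log z)^{-1}$, to be everywhere defined and hence bounded, and this contradicts $0\in\sigma(A)$ by spectral inclusion for the sectorial calculus, since $|h(z)|\to\infty$ as $z\to0$ in the sector. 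No distinguished vector is ever produced. Your plan --- an explicit witness obtained by integrating the resolvent along a path hugging the boundary ray, then shown to escape every ${\rm{Ran}}\,A^{s}$ --- would require lower bounds on concrete vectors in an abstract Banach space, which is exactly the difficulty the category argument is designed to avoid, and you yourself defer it (``where essentially all the work lies''). As it stands, the essential half of the statement is unproved.
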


\section{Application to concrete operators}\label{apliques}
\setcounter{theorem}{0}
\setcounter{equation}{0}

In this section we show two examples 
to illustrate results of the paper.

\begin{example}\label{ONE}
\normalfont
Let $1\le p<\infty.$ With $\Vert T\Vert_{op,p}$ we denote the operator norm of operators $T$ in ${\mathcal B}(L^p(0,1))$. 
Let $V$ be the Volterra integral operator on $L^p(0,1)$ given by
$$
Vf(t):=\int_0^t f, \quad t\in[0,1],\, f\in L^p(0,1).
$$

Define $T_V:=I-V$. Estimates involving powers of the operator $T_V$ were given in \cite[Th. 11]{Hi} for $p=1$. Long after that, such estimates were extended to arbitrary $p\in[1,\infty)$. Namely, there exist $A>0$, $B>0$ such that
$$
A\ n^{\vert (1/4)-(1/2p)\vert}\le\Vert T_V^n\Vert_{op,p}\le B\ n^{\vert (1/4)-(1/2p)\vert}, \quad n\in\N;
$$
see \cite[Th. 2.2]{MSZ}. Thus $T_V$ is power-bounded on $L^p(0,1)$ exclusively in the Hilbertian case $p=2$.

In fact, powers $T^n_V$, $n\in\N$, and 
means $M^\alpha_{T_V}$ of $T_V$ can be expressed in the integral form
$$
M^\alpha_{T_V}(n) f(t)=f(t)-{1\over k^{\alpha+1}(n)}\int_0^t L_{n-1}^{(\alpha+1)}(t-u) f(u) du,\quad t\in[0,1],\, n\in\N,\, f\in L^p(0,1),
$$
where $\alpha\ge0$ ($\alpha=0$ gives the integral for $T^n_V$)
and $L_{n-1}^{(\alpha+1)}$ is the generalized Laguerre polynomial of degree $n-1$,
see \cite[(5.3) and  (6.14)]{Hi}.
In other words, we have
\begin{equation}\label{Mconv}
M^\alpha_{T_V}(n) f=(\delta_0-[k^{\alpha+1}(n)]^{-1}L_{n-1}^{(\alpha+1)})\star f,\quad n\in\N,\,f\in L^p(0,1),
\end{equation}
where $\delta_0$ is the Dirac mass at $\{0\}$ and \lq\lq$\star$" is the convolution operation on $(0,1)$. Moreover, the sequence
$$
\Lambda_{n-1}^{(\alpha+1)}:=[k^{\alpha+1}(n)]^{-1}L_{n-1}^{(\alpha+1)}, \quad n\in\N,
$$
is a bounded approximate unit in the convolution Banach algebra $L^1(0,1)$
\cite[Lemma 1 and (6.14)]{Hi}. Using these properties, it can be shown that
$T_V$ is $(C,\alpha)$-bounded on $L^1(0,1)$ if and only if $T_V$ is $(C,\alpha)$-ergodic on $L^1(0,1)$ 
if and only if $\alpha>1/2$ \cite[Th. 11]{Hi}. For $1<p<\infty$ we have the following result.

\begin{proposition}\label{volterra}
Let $V$ be the Volterra operator and set $T_V=I-V$ as above. Let $1\le p<\infty$. We have the following properties.
\begin{itemize}
\item[(i)] $(I-T_V)(L^p(0,1))$ is dense in $L^p(0,1)$.

\item[(ii)] For every $\alpha>\vert1/2-1/p\vert$ the operator $T_V$ is 
$(C,\alpha)$-ergodic on
$L^p(0,1)$.

\item[(iii)] Let $\alpha>\vert1/2-1/p\vert$ and $0<s<1$. Take $f\in L^p(0,1)$. Then the Volterra integral equation
\begin{equation}\label{EquVolterra}
{1\over\Gamma(s)}\int_0^t(t-u)^{s-1}g(u)\ du=f(t), \quad 0\le t\le1,
\end{equation}
has a (unique) solution $g\in L^p(0,1)$ if and only if 
$\sum_{n=1}^\infty n^{s-1}(\delta_0-\Lambda_{n-1}^{(\alpha+1)})\star f$ 
is norm-convergent. 
In this case,
$$
g={\sin(\pi s)\over\pi}{\Gamma(1-s+\alpha)\over\Gamma(\alpha+1)}
\sum_{n=0}^\infty{\Gamma(s+n)\over n!}(\delta_0-\Lambda_{n-1}^{(\alpha+1)})\star f.
$$
Also,
$$
\Vert(\delta_0- \Lambda_{n-1}^{(\alpha+2)})\star f\Vert_{p}=o(n^{-s}), \hbox{ as } n\to\infty.
$$
\medskip

If, moreover, $0<s<1- \vert1/2-1/p\vert$ and $\alpha\in(\vert1/2-1/p\vert,1-s),$ the 
equation \eqref{EquVolterra} has a (unique) solution $g\in L^p(0,1)$ if and only if  
$\sum_{n=1}^\infty n^{s-1}(\delta_0-L_{n-1}^{(1)})\star f$
is norm-convergent,
with
$$
g=\sum_{n=0}^\infty k^s(n)(\delta_0-L_{n-1}^{(1)})\star f.
$$
Also,
$$
{1\over n}\Vert\sum_{j=1}^n(\delta_0-\Lambda_{j-1}^{(1)})\star f\Vert_{p}=o(n^{-s}), \hbox{ as } n\to\infty.
$$

\item[(iv)] Let $\alpha>\vert1/2-1/p\vert$. Then 
$f\in{\rm{Dom}}(\log V)\subset L^p(0,1)$ if and only if the series  
$\sum_{n=1}^\infty n^{-1}(\delta_0-\Lambda_{n-1}^{(\alpha+1)})\star f$ is norm-convergent. 

In this case,
$$
(\log V)f=(\psi(\alpha+1)-\psi(1))f-\sum_{n=1}^\infty{1\over n}(\delta_0-\Lambda_{n-1}^{(\alpha+1)})\star f.
$$
If moreover $\alpha\in(\vert1/2-1/p\vert,1)$ then
$$
f\in{\rm{Dom}}(\log V)\subset L^p(0,1)\Longleftrightarrow\sum_{n=1}^\infty n^{-1}(\delta_0- L_{n-1}^{(1)})\star f\hbox{ is norm-convergent. }
$$
In this case,\,
$\displaystyle(\log V)f=-\sum_{n=1}^\infty{1\over n}(\delta_0-L_{n-1}^{(1)})\star f$.
\end{itemize}
\end{proposition}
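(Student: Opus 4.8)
The plan is to read off everything from the convolution identity \eqref{Mconv}, namely $M^\alpha_{T_V}(n)f=(\delta_0-\Lambda_{n-1}^{(\alpha+1)})\star f$, together with the general theorems of Sections~\ref{DOF} and~\ref{PoiLo} applied to $T=T_V$. First I would record the structural facts about $V$. Since $I-T_V=V$ and $Vf=0$ forces $\int_0^t f=0$ for all $t$, hence $f=0$, the operator $I-T_V$ is injective, so $\mathrm{Ker}(I-T_V)=\{0\}$. For part (i), the range of $V$ contains every $C^1$ function vanishing at the origin (take $f=g'$), and such functions are dense in $L^p(0,1)$ for $1\le p<\infty$; equivalently, the adjoint $V^*h(t)=\int_t^1 h$ is injective. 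Thus $\overline{\mathrm{Ran}}(I-T_V)=L^p(0,1)$, and consequently $L^p(0,1)=\mathrm{Ker}(I-T_V)\oplus\overline{\mathrm{Ran}}(I-T_V)$ trivially. These two facts supply exactly the standing hypotheses $\mathrm{Ker}(I-T)=\{0\}$ and $\overline{(I-T)X}=X$ required by all the theorems I intend to invoke.

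For part (ii) the heart of the matter is the uniform bound $\sup_n\|M^\alpha_{T_V}(n)\|_{op,p}<\infty$ for $\alpha>|1/2-1/p|$. Triangle-inequality estimates against the growth $\|T_V^n\|_{op,p}\sim n^{|1/4-1/(2p)|}$ of \cite[Th.~2.2]{MSZ} are too lossy: averaging $k^\alpha(n-j)\|T_V^j\|$ only returns a bound of order $n^{|1/4-1/(2p)|}$, with no gain, so the cancellation encoded in the Laguerre kernels must be exploited. I would use the two endpoint cases. On $L^2$ the exponent vanishes, so $T_V$ is power-bounded, hence $(C,\alpha)$-bounded for every $\alpha>0$. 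On $L^1$, and by duality (via $V^*$, which is conjugate to $V$) on $L^\infty$, Young's inequality together with the fact that $(\Lambda_{n-1}^{(\alpha+1)})_n$ is a bounded approximate unit in $L^1(0,1)$ for $\alpha>1/2$ gives $\|M^\alpha_{T_V}(n)\|_{op,1}\le 1+\sup_n\|\Lambda_{n-1}^{(\alpha+1)}\|_1<\infty$. The tent-shaped threshold $|1/2-1/p|$ is then obtained by interpolating in $p$ and $\alpha$ simultaneously: writing $1/p=1-\theta+\theta/2$ one checks that the linear path from $(\alpha,p)=(1/2,1)$ to $(0,2)$ passes through $\alpha=1/p-1/2$, which matches the interpolated exponent exactly. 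This is realized by Stein's interpolation theorem applied to the analytic family $z\mapsto M^{z}_{T_V}(n)=k^{z+1}(n)^{-1}\sum_{j}k^{z}(n-j)T_V^j$, whose entries are entire in $z$. Once the uniform bound is in hand, I fix $\alpha>|1/2-1/p|$, choose $\alpha'$ with $|1/2-1/p|<\alpha'<\alpha$, note that $T_V$ is $(C,\alpha')$-bounded, and conclude $(C,\alpha)$-ergodicity from Theorem~\ref{MeanErg} using the splitting of the first paragraph.

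Parts (iii) and (iv) are then applications of the machinery. I would first identify the operators: since $I-T_V=V$, the functional-calculus power $(I-T_V)^s=\mathfrak{k}^{-s}(T_V)$ coincides with the $s$-th fractional power of the integration operator, that is, with the Riemann--Liouville integral $V^sg(t)=\Gamma(s)^{-1}\int_0^t(t-u)^{s-1}g(u)\,du$, so $\mathrm{Ran}(I-T_V)^s=\mathrm{Ran}(V^s)$ and the Poisson equation $(I-T_V)^su=f$ is precisely \eqref{EquVolterra}; similarly $\log(I-T_V)=\log V$. As $V$ is injective, $V^s$ is injective, so the solution is unique when it exists. Theorem~\ref{CesPoisson} applied to $T_V$ gives the solvability criterion and the series formula; substituting $\Delta^{-\alpha}\mathcal{T}(n)f=k^{\alpha+1}(n)(\delta_0-\Lambda_{n-1}^{(\alpha+1)})\star f$ and simplifying $\frac{\Gamma(s+n)}{\Gamma(n+\alpha+1)}k^{\alpha+1}(n)=\frac{\Gamma(s+n)}{\Gamma(\alpha+1)\,n!}$ yields the stated expression for $g$, while the rates $o(n^{-s})$ come from Corollaries~\ref{GenRatePoisson} and~\ref{SmallRatePoisson}, once one notes $M^{\alpha+1}_{T_V}(n)f=(\delta_0-\Lambda_{n-1}^{(\alpha+2)})\star f$ and $T_V^jf=(\delta_0-L_{j-1}^{(1)})\star f$ (here $k^1\equiv 1$, so $\Lambda^{(1)}_{j-1}=L^{(1)}_{j-1}$). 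The Taylor-series refinement for $\alpha\in(|1/2-1/p|,1-s)$ follows the same way from Theorem~\ref{equivTaylor}, using $T_V^nf=(\delta_0-L_{n-1}^{(1)})\star f$ and $(I-T_V)^{-s}f=\sum_n k^s(n)T_V^nf$. For part (iv) I feed $T_V$ into Theorem~\ref{LOG}, again substituting the convolution identity and simplifying $\frac{\Gamma(\alpha+1)\Gamma(n)}{\Gamma(n+\alpha+1)}k^{\alpha+1}(n)=1/n$ to reach the logarithm formula with constant $\psi(\alpha+1)-\psi(1)=\int_0^1(1-u^\alpha)(1-u)^{-1}du$; the sharper statement for $\alpha\in(|1/2-1/p|,1)$ is Theorem~\ref{LOG&hilbert}.

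I expect the single genuine obstacle to be the uniform $L^p$ bound of part (ii): the naive power-growth estimate does not suffice, and making the Stein interpolation rigorous requires controlling the complex means $M^{z}_{T_V}(n)$ on the vertical lines $\mathrm{Re}\,z\in\{0,1\}$ uniformly in $n$ and with admissible growth in $|\mathrm{Im}\,z|$, where the modulus of the complex Ces\`aro numbers $k^{z}$ must be kept under control. Everything in parts (i), (iii) and (iv) is then essentially bookkeeping: identifying $V^s$ and $\log V$, substituting \eqref{Mconv}, and simplifying the Gamma quotients.
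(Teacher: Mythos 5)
Your proposal is correct and takes essentially the same route as the paper: part (ii) is obtained exactly as in the paper by complex interpolation of the analytic family of Ces\`aro means (the paper's one-line appeal to \cite[Th. 1]{CJ}, resting on the entirety of $z\mapsto k^{z}(n)$, with the $L^2$ power-boundedness and Hille's $L^1$ result for $\alpha>1/2$ as endpoints, plus duality via the reflection conjugating $V$ and $V^*$), and parts (i), (iii), (iv) are, as in the paper, direct applications of Theorem \ref{MeanErg}, Theorem \ref{CesPoisson}, Corollaries \ref{GenRatePoisson} and \ref{SmallRatePoisson}, Theorem \ref{equivTaylor}, Theorem \ref{LOG} and Theorem \ref{LOG&hilbert}, after substituting the convolution identity \eqref{Mconv} and simplifying the Gamma quotients. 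Your write-up is in fact more detailed than the paper's proof, which leaves the interpolation details to the reader.
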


\begin{proof}
(i) This is standard.

(ii) For fixed $n\in\N$, the mapping $z\mapsto k^z(n)$ is an entire function by (\ref{definition}). This simple but primordial fact allows one to show by standard arguments on complex interpolation (using for example 
\cite[Th. 1]{CJ}; details are left to readers) that $T_V$ is $(C,\alpha)$-bounded on $L^p(0,1)$ for every 
$\alpha>\vert (1/2)-(1/p)\vert$. So $T_V$ is $(C,\alpha)$-ergodic for every $\alpha>\vert (1/2)-(1/p)\vert$ by part (i) and Theorem \ref{MeanErg}.

(iii) The Volterra equation is $(I-T_V)^sg=V^sg=f$ and so the assertions of this part (iii) follow from (i), (\ref{Mconv}), Theorem \ref{CesPoisson}, Corollary \ref{GenRatePoisson}, Theorem \ref{equivTaylor} and Corollary \ref{SmallRatePoisson}, respectively.

(iv) This is as in part (iii), this time applying Theorem \ref{LOG} and Theorem \ref{LOG&hilbert}.
\end{proof}
\end{example}

\begin{remark}
\normalfont
In the above proposition, condition $\alpha>\vert (1/2)-(1/p)\vert$ in the results about $L^p([0,1])$ is a sufficient condition. It is pertinent to ask if that condition is also necesary, but this is not part of our aims here and then we do not address this (involved) problem in this paper.
Note also that for $p=\infty$, which corresponds to $\alpha>1/2$, all the results in Proposition \ref{volterra} hold true by replacing the space
$L^\infty([0,1])$ with $C([0,1])$.

As regarding convergence, the norm-convergence can be replaced with weak convergence in the statement of Proposition \ref{volterra}.
\end{remark}

\begin{example}\label{TWO}
\normalfont
Let $0<\beta<1$ and let $\ell^2_{\beta}(\N_0)$ denote the Hilbert space of sequences $f$ such that 
$\Vert f\Vert_{2,\beta}^2:=\sum_{j=0}^{\infty}\vert f(j)\vert^2 k^\beta(j)<\infty$.
Let $T_S$ be the backward shift operator on $\ell^2_{\beta}(\N_0)$ given by  
$$
(T_Sf)(j)=f(j+1), \quad f\in\ell^2_{\beta}(\N_0), j\in \N_0
$$
Then $\|T_S^n\|^2\sim (n+1)^{1-\beta}$, so $T_S$ is not power-bounded on 
$\ell^2_{\beta}(\N_0)$, but 
$T_S$ is $(C,\alpha)$-bounded for $\alpha>(1-\beta)/2$, see \cite{ABY}. 

We have that $I-T_S$ is the first order finite difference operator $W=D$ (recall Section \ref{Pre}), that is,
$$
(I-T_S)f(n)=f(n)-f(n+1), \qquad n\in\N_0.
$$

It is very simple to show that the space $c_{00}(\N_0)$ of eventually null sequences satisfies
$c_{00}(\N_0)\subset(I-T_S)(c_{00}(\N_0))$, whence one gets the density of $(I-T_S)(\ell^2_{\beta}(\N_0))$ in 
$\ell^2_{\beta}(\N_0)$.
As a consequence,
$T_S$ is $(C,\alpha)$-ergodic for $\alpha>(1-\beta)/2$ by Theorem \ref{MeanErg}. 
Thus we can apply the results of Section \ref{PoiLo}
to $T_S$ in a similar way we have done in the above example for $T_V$.

\begin{proposition}\label{differenceOp}
Let $T_S$ be the backward shift acting on $\ell^2_{\beta}(\N_0)$, $0<\beta<1$,
as above and assume $\alpha>(1-\beta)/2$.
\begin{itemize}

\item[(i)] Let $0<s<1$. Take $f\in \ell^2_{\beta}(\N_0)$. Then the elliptic problem in differences
\begin{equation}\label{Elliptic}
D^s u=f
\end{equation}
has a (unique) solution $u\in\ell^2_{\beta}(\N_0)$ if and only if 
$\sum_{n=1}^\infty n^{s-1-\alpha}\sum_{j=0}^n k^\alpha(n-j)f(j+\cdot)$ is norm-(or weak-)convergent in $\ell^2_{\beta}(\N_0)$, and in this case,

Also, $\displaystyle
\Vert \sum_{j=0}^n(n-j)^{\alpha-1} f(j+\cdot)\Vert_{2,\beta}=o(n^{\alpha+1-s}), \hbox{ as } n\to\infty$.

If, moreover, $0<s<(1+\beta)/2$ and $(1-\beta)/2<\alpha<1-s,$ then the equation (\ref{Elliptic})
has a (unique) solution $u\in\ell^2_{\beta}(\N_0)$ if and only if 
$\displaystyle\sum_{n=1}^\infty\frac{1}{n^{1-s}}f(n+\cdot)$ is convergent in $\ell^2_{\beta}(\N_0)$
and then the solution $u$ is given by
$\displaystyle u=\sum_{n=0}^\infty k^s(n)f(n+\cdot)=W^{-s}f$.
\item[(ii)] One has $f\in{\rm{Dom}}(\log D)\subset \ell^2_{k^\beta}(\N_0)$ if and only if 
$\displaystyle\sum_{n=1}^\infty{1\over n^{\alpha+1}}\sum_{j=0}^n k^\alpha(n-j) f(j+\cdot)$ is convergent in 
$\ell^2_{\beta}(\N_0)$.
In this case,
$$
(\log D)f=(\psi(\alpha+1)-\psi(1))f-\sum_{n=1}^\infty B(\alpha+1,n)\sum_{j=0}^n k^\alpha(n-j) f(j+\cdot).
$$
If, moreover, $\alpha\in((1-\beta)/2,1)$ then $f\in{\rm{Dom}}(\log D)\subset \ell^2_{k^\beta}(\N_0)$ if and only if
$\displaystyle
\sum_{n=1}^\infty{f(n+\cdot)\over n}$ is convergent in $\ell^2_{\beta}(\N_0)$
and then $\displaystyle(\log D)f=-\sum_{n=1}^\infty{1\over n}f(n+\cdot)$.
\end{itemize}
\end{proposition}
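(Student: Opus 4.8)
The plan is to obtain the entire statement as a specialization of the abstract theory of Sections~\ref{DOF} and~\ref{PoiLo} to the single operator $T=T_S$ on $X=\ell^2_\beta(\N_0)$, so that the work reduces to (a) checking the standing hypotheses, (b) rewriting the abstract objects $\Delta^{-\alpha}\mathcal{T}(n)$, $(I-T)^{\pm s}$ and $\log(I-T)$ in concrete shift form, and (c) reading off the explicit constants. For (a), $T_S$ is $(C,\alpha)$-bounded for every $\alpha>(1-\beta)/2$ by \cite{ABY}, and $\overline{{\rm Ran}}(I-T_S)=X$ follows from $c_{00}(\N_0)\subset(I-T_S)(c_{00}(\N_0))$; together with Theorem~\ref{MeanErg} this makes $T_S$ $(C,\alpha)$-ergodic and puts us exactly in the setting ``$T$ is $(C,\alpha)$-bounded with $\overline{(I-T)X}=X$'' demanded by Theorems~\ref{CesPoisson}, \ref{equivTaylor}, \ref{LOG} and~\ref{LOG&hilbert}.

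For (b), since $T_S$ is the backward shift one has $T_S^j f=f(\cdot+j)$, so that $\Delta^{-\alpha}\mathcal{T}(n)f=\sum_{j=0}^n k^\alpha(n-j)\,f(j+\cdot)$; inserting this into the series of Theorem~\ref{CesPoisson} and Theorem~\ref{LOG} reproduces verbatim the series appearing in (i) and (ii). Moreover $I-T_S=W=D$, hence $D^su=f$ is precisely the fractional Poisson equation $(I-T_S)^su=f$: its solvability is the condition $f\in{\rm Ran}(I-T_S)^s$, uniqueness is the injectivity of $I-T_S$, and the solution is $u=(I-T_S)^{-s}f$. I would identify this solution with $W^{-s}f$ directly from Theorem~\ref{equivTaylor}: whenever the relevant series converges, $(I-T_S)^{-s}f=\sum_{n\ge0}k^s(n)T_S^nf=\sum_{n\ge0}k^s(n)f(n+\cdot)=W^{-s}f$, the last equality being the definition of the Weyl sum $W^{-s}$ (Definition~\ref{WeylDifference}).

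For (c), the explicit formulas follow by feeding in the computations of Example~\ref{ex22} and the asymptotics \eqref{double2}. In part (i) the first equivalence is Theorem~\ref{CesPoisson} (with $1/n^{1+\alpha-s}=n^{s-1-\alpha}$), the refined equivalence and the formula $u=\sum_n k^s(n)f(n+\cdot)$ are Theorem~\ref{equivTaylor} (valid once $0\le\alpha<1-s$, which combined with $\alpha>(1-\beta)/2$ also forces $s<(1+\beta)/2$), and the rate estimate follows from Corollary~\ref{GenRatePoisson} after writing out the Cesàro mean $M^{\alpha+1}_{T_S}(n)$ in terms of $\Delta^{-(\alpha+1)}\mathcal{T}(n)f=\sum_{j=0}^n k^{\alpha+1}(n-j)f(j+\cdot)$ and applying \eqref{double2}. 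In part (ii) the first equivalence and the logarithmic formula are Theorem~\ref{LOG}, using Example~\ref{ex22}(iii) for $D^\alpha L$ together with $\Gamma(\alpha+1)\Gamma(n)/\Gamma(n+\alpha+1)=B(\alpha+1,n)$ and $\int_0^1(1-u^\alpha)(1-u)^{-1}du=\psi(\alpha+1)-\psi(1)$; the refined statement for $\alpha\in((1-\beta)/2,1)$ is Theorem~\ref{LOG&hilbert}, yielding $(\log D)f=-\sum_n n^{-1}T_S^nf=-\sum_n n^{-1}f(n+\cdot)$ (here $\alpha<1$ comes from that theorem and $\alpha>(1-\beta)/2$ from boundedness).

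The step that requires genuine care, rather than mere translation, is the identification of the abstractly defined functional-calculus operators $(I-T_S)^s$ and $\log(I-T_S)$ with the concrete difference operators $D^s=W^s$ and $\log D$. I would settle this on the eigenfunctions $p_\mu(n)=\mu^{-(n+1)}$, $|\mu|>1$, which belong to $\ell^2_\beta(\N_0)$ (as $|p_\mu(n)|^2k^\beta(n)$ decays geometrically by \eqref{double2}) and satisfy $T_Sp_\mu=\mu^{-1}p_\mu$; then $(I-T_S)^sp_\mu=(1-\mu^{-1})^sp_\mu=\mu^{-s}(\mu-1)^sp_\mu=W^sp_\mu$ by Example~\ref{ex22}(i), and likewise $\log(I-T_S)p_\mu=\log(1-\mu^{-1})p_\mu$. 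The delicate point is promoting this pointwise agreement to an operator identity, for which I would either invoke totality of the family $\{p_\mu\}$ together with closedness of both operators, or simply observe that the agreement is already forced by the series representations of Theorems~\ref{equivTaylor} and~\ref{LOG&hilbert}. All remaining issues (weak versus norm convergence, and uniqueness) are already encoded in the cited theorems and in the density $\overline{{\rm Ran}}(I-T_S)=X$.
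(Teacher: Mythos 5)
Your proposal is correct and is essentially the paper's own route: the paper gives no separate proof of this proposition, but, having recorded that $T_S$ is $(C,\alpha)$-bounded for $\alpha>(1-\beta)/2$ (from \cite{ABY}), that $I-T_S=W=D$, and that $c_{00}(\N_0)\subset(I-T_S)(c_{00}(\N_0))$ yields $\overline{{\rm Ran}}(I-T_S)=\ell^2_{\beta}(\N_0)$, it declares that the results of Section \ref{PoiLo} apply ``in a similar way we have done in the above example for $T_V$''. That specialization --- Theorem \ref{CesPoisson}, Theorem \ref{equivTaylor} and the rate corollaries for part (i), Theorems \ref{LOG} and \ref{LOG&hilbert} for part (ii), with $T_S^jf=f(j+\cdot)$, $B(\alpha+1,n)=\Gamma(\alpha+1)\Gamma(n)/\Gamma(n+\alpha+1)$ and $\psi(\alpha+1)-\psi(1)=\int_0^1(1-u^\alpha)(1-u)^{-1}\,du$ --- is exactly what you carry out, including the remark that $\alpha<1-s$ together with $\alpha>(1-\beta)/2$ forces $s<(1+\beta)/2$. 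One small attribution fix: the displayed rate in (i), whose kernel $(n-j)^{\alpha-1}\sim\Gamma(\alpha)\,k^{\alpha}(n-j)$ corresponds to the $\alpha$-Ces\`aro sums, follows most directly from the norm-convergence of the series in (ii) of Theorem \ref{CesPoisson} (terms of a convergent series tend to zero), whereas Corollary \ref{GenRatePoisson}, which you invoke, gives the analogous statement for the $(\alpha+1)$-sums $\sum_{j=0}^n k^{\alpha+1}(n-j)f(j+\cdot)$, i.e.\ with kernel exponent $\alpha$ rather than $\alpha-1$.

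Where you go beyond the paper is the identification of the calculus operators with the concrete difference operators, and there your argument has a soft spot. In the paper this identification is essentially definitional: since $D=I-T_S$, the symbols $D^s$ and $\log D$ in the proposition denote the functional-calculus operators $(I-T_S)^s$ and $\log(I-T_S)$; the only genuinely concrete identity used, $u=\sum_{n}k^s(n)f(n+\cdot)=W^{-s}f$, is term-by-term the definition of the Weyl sum, as you observe. Your eigenvalue computation on $p_\mu$ is correct, but promoting pointwise agreement to an operator identity by ``totality of $\{p_\mu\}$ together with closedness of both operators'' is not valid in general: two closed unbounded operators can agree on a total set, even on a dense subspace of their common domain, without being equal (one needs a core, or boundedness plus density). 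The repair is easy for the bounded power: for each fixed $n$, the functionals $u\mapsto\bigl((I-T_S)^su\bigr)(n)$ and $u\mapsto D^su(n)$ are both bounded on $\ell^2_{\beta}(\N_0)$ --- the latter by Cauchy--Schwarz, since $\sum_{m\ge0}|k^{-s}(m)|^2k^{\beta}(m+n)^{-1}<\infty$ by \eqref{double2} --- so agreement on the total family $\{p_\mu\}_{|\mu|>1}$ does extend to all of $\ell^2_{\beta}(\N_0)$. For the logarithm no identification is required at all, because $\log D$ has no meaning in the paper other than the calculus operator $\log(I-T_S)$. With this repair, or simply with the paper's definitional reading, your proof is complete.
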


\begin{remark}
\normalfont
The space $\ell^2_{k^\beta}(\N_0)$ coincides, up to equivalent norms, with the weighted Bergman space 
${\mathcal B}_{\nu}$ for
$\nu=-\beta$ formed by the holomorphic functions $\frak f$ on the unit disc such that
$$
\Vert {\frak f}\Vert_{\nu,2}:=\left(\int\int_\D (\nu+1) | f(z) |^2(1-\vert z\vert^2)^\nu dz d\overline{z}\right)^{1/2}<\infty$$ 
(usually, $\nu$ is written like $\nu=\mu-2$ with $\mu>1$).
Naturally, the operator $T_S$, transferred on ${\mathcal B}_{\nu}$, reads 
$\displaystyle T_S{\frak f}(z)={{\frak f}(z)-{\frak f}(0)\over z}$, $\vert z\vert<1$,
whence one gets
$\displaystyle T_S^n {\frak f}(z)
={1\over z^n}({\frak f}(z)-\sum_{j=0}^{n-1}{{\frak f}^{(j)}(0)\over(n-1)!}z^{n-1})$, $\vert z\vert<1,\, n\in\N$
and so
$$
(I-T_S){\frak f}(z)={(z-1){\frak f}(z)-{\frak f}(0)\over z}, \quad \vert z\vert<1,
$$
which are quite more manageable using Taylor coefficients, that is, $\ell^2_{k^\beta}(\N_0)$.
\end{remark}
\end{example}

In Example \ref{ONE} we are interested in $L^p$ spaces whereas Example \ref{TWO} is rather concerned about weights. The results of Proposition \ref{differenceOp} admit translation to the $\ell^p$ case.

\section{Appendix}
\setcounter{theorem}{0}
\setcounter{equation}{0}

As it is announced formerly, we show here the proof of Theorem \ref{logconvex}. This needs a preparatory lemma.

\begin{lemma}\label{Lemma7.6}
Let $m\in\N.$ In the setting of Theorem \ref{logconvex}, $\frak f$ is zero-free on $\D$. 
Assume additionally that $\frak f$ is $p-1$-admissible with $p\in\N$ and $p\leq m.$ Then
for $v\ge1$,
$$
\sum_{l=0}^v W^p g(l)\sum_{j=v-l}^v k^p(j+l-v)D^p f(j)
=\sum_{l=v+1}^\infty W^p g(l)\sum_{j=v+1}^\infty k^p(j+l-v) D^p f(j).
$$
\end{lemma}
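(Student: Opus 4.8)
The plan is to recognise the asserted identity as the specialisation of the master formula of Proposition \ref{Porfin} to the parameters $\alpha=p$ and $h=g$, where $g$ denotes the coefficient sequence of $1/\mathfrak{f}$. Since $\mathfrak{f}\cdot(1/\mathfrak{f})=1$, these sequences satisfy $f\ast g=\delta_0=k^0$. Because $p\in\N$, the Weyl difference coincides with the ordinary finite difference, $W^p=D^p$, so for $v\ge1$ one computes directly
$$
W^p(f\ast g)(v)=W^p\delta_0(v)=\sum_{i=0}^{p}(-1)^i\binom{p}{i}\delta_0(v+i)=0,
$$
every argument $v+i$ being $\ge1$. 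On the other hand, Proposition \ref{Porfin} expresses this same quantity as $\Sigma_1-\Sigma_2$, where $\Sigma_1=\sum_{j=0}^{v}\sum_{l=v-j}^{v}k^p(l+j-v)D^p f(j)W^p g(l)$ and $\Sigma_2=\sum_{j=v+1}^{\infty}\sum_{l=v+1}^{\infty}k^p(l+j-v)D^p f(j)W^p g(l)$. Interchanging the order of summation identifies $\Sigma_1$ with the left-hand side of the lemma — the region $\{0\le j\le v,\ v-j\le l\le v\}$ is the same as $\{0\le l\le v,\ v-l\le j\le v\}$ — and $\Sigma_2$ with the right-hand side, since there the two indices run over independent ranges. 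Setting $\Sigma_1-\Sigma_2=W^p(f\ast g)(v)=0$ then gives the identity.

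So the three steps are: evaluate $W^p(f\ast g)(v)=0$ for $v\ge1$; produce the decomposition $W^p(f\ast g)(v)=\Sigma_1-\Sigma_2$; and reindex. The first and last are immediate, and the whole weight of the proof falls on the decomposition.

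The main obstacle is to obtain that decomposition without circularity. Applied verbatim at order $\alpha=p$, Proposition \ref{Porfin} would require hypothesis (ii) for $\beta=p$, namely $g\in\tau^{p}$ together with $D^{p}g(0)\ge0$ and $D^{p}g(j)\le0$ for $j\ge1$; but these are exactly the order-$p$ admissibility data that the induction on $p$ in the proof of Theorem \ref{logconvex} is trying to establish, and they are not consequences of the assumed $(p-1)$-admissibility. I would therefore not cite Proposition \ref{Porfin} as a black box but re-run its computation in the present, more favourable situation: $p$ is an integer, so the outer difference is a finite sum and $W^p(f\ast g)(v)$ is literally a finite combination of values of the finitely supported $\delta_0$; and the hypotheses of Theorem \ref{logconvex} furnish $D^p f\ge0$, the reproducing identity $W^{-p}(D^p f)=f$, and the decay $D^p f(j)j^{p}\to0$. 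Using these, the representation $f=W^{-p}(D^p f)$, the order-zero sign $g(j)\le0$ ($j\ge1$) inherited from $(p-1)$-admissibility, and the combinatorial identity of Lemma \ref{tech} to perform the rearrangement, one verifies that each interchange of summation is legitimate and that the double series $\Sigma_2$ converges absolutely (hence to the finite quantity $\Sigma_1$). The delicate point is precisely the convergence and rearrangement of $\Sigma_2$, which must be controlled through the $\tau$-membership of $g$ and the decay of $D^p f$ while scrupulously avoiding any appeal to the still-unknown sign of $W^p g$.
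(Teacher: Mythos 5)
Your proposal is correct, and its outer skeleton agrees with the paper's: both reduce the lemma to the decomposition $W^p(f\ast g)(v)=\Sigma_1-\Sigma_2$ combined with $W^p(f\ast g)(v)=W^p\delta_0(v)=0$ for $v\ge1$, and you are right that citing Proposition \ref{Porfin} verbatim at order $p$ would be circular, since its hypothesis (ii) for $h=g$, $\beta=p$ is exactly the order-$p$ admissibility data that the induction in Theorem \ref{logconvex} is in the middle of establishing. Where you genuinely diverge is in how the decomposition is obtained. The paper does \emph{not} re-run the proof of Proposition \ref{Porfin} at order $p$: it sums by parts in both indices, writing $D^pf(j)=D^{p-1}f(j)-D^{p-1}f(j+1)$ and $k^p(n)=k^{p-1}(n)+k^p(n-1)$, so that $\Sigma_1-\Sigma_2$ becomes the difference of the corresponding order-$(p-1)$ expressions at $v$ and at $v+1$ plus boundary terms; it then applies Proposition \ref{Porfin} at order $p-1$ (legitimate by the assumed $(p-1)$-admissibility) to identify that difference with $W^{p-1}(f\ast g)(v)-W^{p-1}(f\ast g)(v+1)=W^p(f\ast g)(v)$, and cancels the boundary terms via the telescoping identity $g(0)=\sum_l W^pg(l)k^p(l)$. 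Your route re-derives the decomposition directly at order $p$, and the observation you build on is the right one: since $p\in\N$, the kernel $k^{-p}$ has support $\{0,\dots,p\}$, so the remainder $\sum_{l=v+1}^{v+p}g(l)\sum_{i=l-v}^{p}k^{-p}(i)f(v+i-l)$ is a \emph{finite} sum; substituting $g=W^{-p}(W^pg)$ and $f=W^{-p}(D^pf)$ into it, applying Lemma \ref{tech}, and interchanging sums then requires only absolute convergence of the factor series, which follows from $g\in\tau^{p-1}$ (giving $\sum_l|W^pg(l)|k^p(l)<\infty$, since $W^pg(l)=W^{p-1}g(l)-W^{p-1}g(l+1)$) and from $\sum_j D^pf(j)(j+1)^{p-1}<\infty$ (Lemma \ref{represen}); the unknown sign of $W^pg$, which Proposition \ref{Porfin}'s proof genuinely uses (to collapse $\sum_s k^\alpha(s-l)|W^\alpha h(s)|$ into $-h(l)$ inside an \emph{infinite} sum over $l$), is never needed because your sum over $l$ is finite. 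Both proofs consume the same input, the $(p-1)$-admissibility; yours is more self-contained, at the price of redoing the Lemma \ref{tech} rearrangement, while the paper's recycles Proposition \ref{Porfin} as a black box one order down, at the price of heavier boundary-term bookkeeping. When writing yours up, make two points explicit: the identity $g=W^{-p}(W^pg)$ needs a line of justification (it follows from $g=W^{-(p-1)}(W^{p-1}g)$ with $W^{p-1}g\in\ell^1(k^p)$, by telescoping), and the sign $g(j)\le0$ and the decay $D^pf(j)j^p\to0$ that you list among your tools turn out not to be needed anywhere in the argument.
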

\begin{proof}
The fact that $\frak f$ has no zeroes in $\D$ is in \cite[Prop. 4.4]{HT}.

To prove the statement, it is enough to show  the equality
\begin{equation}\label{7}W^{p}(f*g)(v)
=\left(\sum_{l=0}^v\sum_{j=v-l}^v-\sum_{j=v+1}^\infty\sum_{l=v+1}^\infty\right)
k^p(l+j-v)D^p f(j)W^p g(l),\quad v\in\N,\end{equation} since $W^{p}(f*g)(v)=0$ for $v\in\N$ because 
$\displaystyle \mathfrak{f}\mathfrak{\frac{1}{\mathfrak{f}}}=1.$

Fix $\vert z\vert<1$. We have 
$\Delta^{-p}\vert {\mathcal Z}\vert(j)\sim (j+1)^{p-1}$, as $j\to\infty$ (up to a constant),
by (\ref{finitesum}). Since $\mathfrak f$ is log-convex of degree $m,$ then $D^{p}f>0$ and so 
$\sum_{j=1}^{\infty} D^{p}f(j)(j+1)^{p-1}<\infty$ by Lemma \ref{represen}. On the other hand
$(l+j-v+1)^{p-1}(j+1)^{1-p}
\le \left((l/(j+1))+1\right)^{p-1}\le(l+1)^{p-1}$, if  $j,l>  v\geq 1$.

Therefore
\begin{displaymath}\begin{array}{l}
\displaystyle\sum_{j=v+1}^\infty D^p f(j)(j+1)^{p-1}
\sum_{l=v+1}^\infty k^p(l+j-v)(j+1)^{1-p}\vert W^{p-1} g(l)-W^{p-1}g(l+1)\vert \\
\displaystyle\leq M_p \sum_{j=v+1}^\infty D^p f(j) (j+1)^{p-1}
\sum_{l=v+1}^\infty\left({l+j-v+1\over j+1}\right)^{p-1}
(\vert W^{p-1}g(l)\vert+\vert W^{p-1}g(l+1)\vert)<\infty.
\end{array}\end{displaymath}

Also, for  $v\geq 1$ and $0\leq l\leq v$,  \begin{displaymath}\begin{array}{l}
\displaystyle\sum_{j=v-l}^v D^p f(j)k^p(l+j-v)=\displaystyle\sum_{j=v-l}^v D^{p-1} f(j)k^p(l+j-v)-\displaystyle\sum_{j=v-l+1}^{v+1} D^{p-1}f(j)k^p(l+j-v-1) \\
=\displaystyle\sum_{j=v-l}^v D^{p-1} f(j)k^{p-1}(l+j-v)-D^{p-1}f(v+1)k^p(l)
\end{array}\end{displaymath}
and for $l\geq  v+1$, \begin{displaymath}\begin{array}{l}
\displaystyle\sum_{j=v+1}^{\infty} D^p f(j)k^p(l+j-v)=\displaystyle\lim_{N}\biggl(\sum_{j=v+1}^N D^{p-1} f(j)k^p(l+j-v)-\displaystyle\sum_{j=v+2}^{N+1} D^{p-1}f(j)k^p(l+j-v-1)\biggr) \\
=\displaystyle\sum_{j=v+2}^{\infty} D^{p-1} f(j)k^{p-1}(l+j-v)+D^{p-1}f(v+1)k^p(l+1)-\lim_{N}D^{p-1} f(N+1)k^p(N+l-v)\\
=\displaystyle\sum_{j=v+2}^{\infty} D^{p-1} f(j)k^{p-1}(l+j-v)+D^{p-1}f(v+1)k^p(l+1).
\end{array}\end{displaymath}
In other words, the right hand side of \eqref{7} converges absolutely.

So, writing $D^{p}f(j)=D^{p-1}f(j)-D^{p-1}f(j+1)$ and $k^{p}(l+j-v)=k^{p-1}(l+j-v)-k^{p}(l+j-v-1)$ in \eqref{7}, one gets \begin{displaymath}\begin{array}{l}
\displaystyle\left(\sum_{l=0}^v\sum_{j=v-l}^v-\sum_{j=v+1}^\infty\sum_{l=v+1}^\infty\right)
k^p(l+j-v)D^p f(j)W^p g(l) \\
=\displaystyle\left(\sum_{l=0}^v\sum_{j=v-l}^v-\sum_{j=v+2}^\infty\sum_{l=v+1}^\infty\right)
k^{p-1}(l+j-v)D^{p-1} f(j)(W^{p-1} g(l)-W^{p-1}g(l+1))\\
\displaystyle-D^{p-1}f(v+1)\sum_{l=0}^v W^{p}g(l)k^p(l)-D^{p-1}f(v+1)\sum_{l=v+1}^{\infty} W^{p}g(l)k^p(l+1),
\end{array}\end{displaymath} where the series in the right hand member of the equality converge because $\mathfrak f$ is $p-1$-admissible.

Then, rearranging terms,

\begin{displaymath}\begin{array}{l}
\displaystyle\left(\sum_{l=0}^v\sum_{j=v-l}^v-\sum_{j=v+1}^\infty\sum_{l=v+1}^\infty\right)
k^p(l+j-v)D^p f(j)W^p g(l) \\
=\displaystyle\left(\sum_{l=0}^v\sum_{j=v-l}^v-\sum_{j=v+1}^\infty\sum_{l=v+1}^\infty\right)
k^{p-1}(l+j-v)D^{p-1} f(j)W^{p-1} g(l)\\
-\displaystyle\left(\sum_{l=0}^{v+1}\sum_{j=v+1-l}^{v+1}-\sum_{j=v+2}^\infty\sum_{l=v+2}^\infty\right)
k^{p-1}(l+j-v-1)D^{p-1} f(j)W^{p-1} g(l)\\
+\displaystyle D^{p-1}f(v+1)(\sum_{l=0}^{v+1} W^{p-1}g(l)k^{p-1}(l)+\sum_{l=v+1}^{\infty} W^{p-1}g(l)k^{p-1}(l+1))\\
\displaystyle-D^{p-1}f(v+1)(\sum_{l=0}^v W^{p}g(l)k^p(l)+\sum_{l=v+1}^{\infty} W^{p}g(l)k^p(l+1)).
\end{array}\end{displaymath}

By Proposition \ref{Porfin}, the series in the two above first brackets are $$W^{m-1}(f*g)(v)-W^{m-1}(f*g)(v+1)=W^{m}(f*g)(v).$$ Furthermore, note that $g(0)=\sum_{l=0}^{\infty}Wg(l)=\ldots=\sum_{l=0}^{\infty}W^{p}g(l)k^p(l)$ since $W^{p-1}g(l)k^{p}(l)\to 0$ as $l\to\infty.$ Then

\begin{displaymath}\begin{array}{l}
\displaystyle\sum_{l=0}^{v+1} W^{p-1}g(l)k^{p-1}(l)+\sum_{l=v+1}^{\infty} W^{p-1}g(l)k^{p-1}(l+1))-\sum_{l=0}^v W^{p}g(l)k^p(l)-\sum_{l=v+1}^{\infty} W^{p}g(l)k^p(l+1)\\
=\displaystyle\sum_{l=0}^{v+1} W^{p-1}g(l)k^{p-1}(l)+\sum_{l=v+1}^{\infty} W^{p-1}g(l)k^{p-1}(l+1))-g(0)\\
\displaystyle-\sum_{l=v+1}^{\infty}(W^{p-1}g(l)-W^{p-1}g(l+1))(k^p(l+1)-k^{p}(l))
=\displaystyle\sum_{l=0}^{\infty} W^{p-1}g(l)k^{p-1}(l)-g(0)=0,\\
\end{array}\end{displaymath}
and the result follows.
\end{proof}

{\it Proof of Theorem \ref{logconvex}.} By \cite[Prop. 4.4]{HT}, we have that $\mathfrak{f}$ is $0$-admissible. 
Let now assume that $\mathfrak{f}$ is $p-1$-admissible for some $p\in\N$ such that $p\leq m.$ 
We claim that $\mathfrak{f}$ is also $p$-admissible. First note that $\frak f$ has no zeroes in $\D$. Also, 
since $\mathfrak{f}$ is $p-1$-admissible, $D^{p}g(0)=D^{p-1}g(0)-D^{p-1}g(1)>0.$ 
Next, we prove that $W^{p}f(v)<0$ for every $v\in\N,$ by induction on $v.$ By Lemma \ref{Lemma7.6} 
we have, for $v=1$, 
$$
\sum_{l=0}^1W^{p}g(l)\sum_{j=1-l}^{\infty}k^{p}(j+l-1)D^{p}f(j)
=\sum_{l=0}^{\infty}W^{p}g(l)\sum_{j=2}^{\infty}k^{p}(j+l-1)D^{p}f(j).
$$ 

Then, since $W^{-p}(D^pf)=f$ one gets\begin{eqnarray*}
W^{p}g(1)f(0)&=&\sum_{l=0}^{\infty}(W^{p-1}g(l)-W^{p-1}g(l+1))\sum_{j=2}^{\infty}k^{p}(j+l-1)D^{p}f(j)-W^{p}g(0)f(1)\\
&=&\sum_{l=0}^{\infty}W^{p-1}g(l)\sum_{j=2}^{\infty}k^{p}(j+l-1)D^{p}f(j)\\
&&-\sum_{l=1}^{\infty}W^{p-1}g(l)\sum_{j=2}^{\infty}k^{p}(j+l-2)D^{p}f(j)-W^{p}g(0)f(1)\\
&=&W^{p-1}g(0)\sum_{j=2}^{\infty}k^{p}(j-1)D^{p}f(j)\\
&&+\sum_{l=1}^{\infty}W^{p-1}g(l)\sum_{j=2}^{\infty}k^{p-1}(j+l-1)D^{p}f(j)-W^{p}g(0)\sum_{j=1}^{\infty}k^{p}(j-1)D^{p}f(j)\\
&=&W^{p-1}g(1)f(1)-W^{p-1}g(0)D^{p}f(1)+\sum_{l=1}^{\infty}W^{p-1}g(l)\sum_{j=2}^{\infty}k^{p-1}(j+l-1)D^{p}f(j)<0,
\end{eqnarray*}
and therefore $W^{p}g(1)<0.$

Suppose $W^{p}g(l)<0$ for $1\leq l\leq v,$ with $v\in\N.$ We must show that $W^{p}g(v+1)<0$.

By Lemma \ref{Lemma7.6}, applied to $v$ and $v+1,$ we have 

\begin{equation*}\left\{
\begin{array}{l}
\displaystyle\sum_{l=0}^v W^p g(l)\sum_{j=v-l}^v k^p(j+l-v)D^p f(j)-\sum_{l=v+1}^\infty W^p g(l)\sum_{j=v+1}^\infty k^p(j+l-v) D^p f(j)=0\\ \\
\displaystyle\sum_{l=0}^{v+1} W^p g(l)\sum_{j=v-l}^v k^p(j+l-v)D^p f(j+1)-\sum_{l=v+2}^\infty W^p g(l)\sum_{j=v+1}^\infty k^p(j+l-v) D^p f(j+1)=0,
\end{array}\right.
\end{equation*}
where we have rearranged indexes in the second equality.
Multiplying the first identity by $D^{p}f(v+1)$ and the second by $D^{p}f(v)$ and subtracting one obtains 

\begin{equation}\label{7.4}
\begin{array}{l}
\displaystyle\sum_{l=1}^v W^p g(l)\sum_{j=v-l}^v k^p(j+l-v)(D^p f(j)D^pf(v+1)-D^{p}f(j+1)D^p f(v))\\ \\
\displaystyle-D^{p}f(v)W^{m}g(v+1)\sum_{l=0}^{v+1}D^p f(j)k^{p}(j)-D^{p}f(v+1)\sum_{l=v+1}^\infty W^p g(l)\sum_{j=v+1}^\infty k^p(j+l-v) D^p f(j)\\
\displaystyle+D^{p}f(v)\sum_{l=v+2}^\infty W^p g(l)\sum_{j=v+1}^\infty k^p(j+l-v) D^p f(j+1)=0.
\end{array}
\end{equation}

In \eqref{7.4}, the terms multiplying $W^p g(v+1)$ are
\begin{equation*}
\begin{array}{l}
\displaystyle -D^{p}f(v)\sum_{j=0}^{v+1}D^p f(j)k^{p}(j)-D^{p}f(v+1)\sum_{j=v+1}^\infty k^p(j+1) D^p f(j)\\ \\
=\displaystyle D^{p}f(v)\sum_{j=v+2}^{\infty}D^p f(j)k^{p}(j)-D^{p}f(v)f(0)-D^{p}f(v+1)\sum_{j=v+1}^\infty k^p(j+1) D^p f(j)\\ \\
=\displaystyle -D^{p}f(v)f(0)+\sum_{j=v+2}^{\infty}k^p(j)\biggl(D^pf(j)D^{p}f(v)-D^{p}f(j-1)D^{p}f(v+1)\biggr).
\end{array}
\end{equation*}

Hence, rewriting \eqref{7.4} suitably, one obtains

\begin{eqnarray*}
W^{p}g(v+1)f(0)&=&
\displaystyle\sum_{l=1}^v W^p g(l)
\sum_{j=v-l}^v k^p(j+l-v)D^p f(j)\biggl(\frac{D^pf(v+1)}{D^{p}f(v)}-\frac{D^{p}f(j+1)}{D^p f(j)}\biggr)\\
&&
-\displaystyle\sum_{l=v+1}^{\infty} W^p g(l)
\sum_{j=v+1}^{\infty} k^p(j+l-v)D^p f(j)\biggl(\frac{D^pf(v+1)}{D^{p}f(v)}-\frac{D^{p}f(j+1)}{D^p f(j)}\biggr)\\
&=&
\displaystyle\sum_{l=1}^v W^p g(l)\sum_{j=v-l}^v k^p(j+l-v)D^p f(j)\biggl(\frac{D^pf(v+1)}{D^{p}f(v)}-\frac{D^{p}f(j+1)}{D^p f(j)}\biggr)\\
&&
-\displaystyle\sum_{l=v+2}^{\infty} W^{p-1} g(l)
\sum_{j=v+1}^{\infty} k^{p-1}(j+l-v)D^p f(j)\biggl(\frac{D^pf(v+1)}{D^{p}f(v)}-\frac{D^{p}f(j+1)}{D^p f(j)}\biggr)\\
&&
-W^{p-1} g(v+1)\sum_{j=v+1}^{\infty} k^{p}(j+1)D^p f(j)\biggl(\frac{D^pf(v+1)}{D^{p}f(v)}-\frac{D^{p}f(j+1)}{D^p f(j)}\biggr),
\end{eqnarray*}
whence the induction hypothesis and the log-convexity of degree $m$ of $\mathfrak{f}$ imply $W^{p}g(v+1)<0,$ as we wanted to show.

Note that we have proved that $W^{p}g(0)>0$ and $W^{p}g(j)<0$ for $j\in\N$ and $p=0,1,\ldots,m.$ 
Hence, the bounded and free-zero function $\mathfrak f$  is $p$-admissible for every $p=0,1,\ldots,m.$ 
In particular 
$\mathfrak{g}\in A^{m}(\D)\subset A^{r}(\D),$ for $0\leq r\leq m,$ and so 
$W^{r}g=D^{r}g.$ Take $p\in\N$ with $p\leq m.$ If $\beta \in(p-1,p)$ we have 
\begin{eqnarray*}
W^{\beta}g(0)&=&D^{\beta}g(0)=\sum_{l=0}^{\infty}k^{p-1-\beta}(l)D^{p-1}g(l)\\
&=&k^{p-1-\beta}(0)D^{p-1}g(0)+\sum_{l=1}^{\infty}k^{p-1-\beta}(l)D^{p-1}g(l)>0,
\end{eqnarray*}
and for $j\in\N,$ 
$$
W^{\beta}g(j)=\sum_{l=0}^{\infty}k^{p-1-\beta}(l)D^{p-1}g(l+j)=\sum_{l=1}^{\infty}k^{p-1-\beta}(l)(D^{p-1}g(l+j)-D^{p-1}g(j))<0,
$$
since $D^{p-1}g$ is increasing on $\N$ because $D^{p}g<0$ on $\N.$ 
All the above together with Remark \ref{remark7.1} imply that $\mathfrak{f}$ is $\beta$-admissible for each 
$0\leq \beta\leq m.$ $\hfill {\Box}$
\vspace{0.5cm}

\end{document}